\newcommand{\rast}{\rho_{\ast}}
\newcommand{\dimh}{s}
\newcommand{\ol}{\overline}
\newcommand{\mua}{\mu}
\def\tims{time(s)}
\def\outite{$\sharp$ite(Alg\ref{alg4})}
\def\innite{$\sharp$ite(Inner)}
\def\resKKT{$\|\Phi_0^1\|$}
\def\resKKTMT{$\|\Xi_0^I\|$}
\def\fail{success(\%)}
\newcommand{\dprime}{\prime\prime}
\newcommand{\fr}{\frac{1}{2}}
\newcommand{\tx}{\tilde{x}}
\newcommand{\tY}{\widetilde{Y}}
\newcommand{\tz}{\tilde{z}}
\newcommand{\tl}{\widehat{t}_{\ell}}
\newcommand{\Gi}{\mathcal{G}_i}
\newcommand{\twc}{\tw_{\rm c}}
\newcommand{\txt}{\tx_{\rm t}}
\newcommand{\txc}{\tx_{\rm c}}
\newcommand{\ellem}{l}
\newcommand{\mO}{O}
\newcommand{\twt}{\tw_{\rm t}}
\newcommand{\Uellast}{S_{\ell}^{i}}
\newcommand{\zli}{Z_{\ell}^i}
\newcommand{\Ql}{Q_{\ell}}
\newcommand{\Uls}{\mathcal{U}_{\ell}^i}
\newcommand{\UQl}{\mathcal{U}_{\Ql}^i}
\newcommand{\dd}{d}
\newcommand{\muk}{\mu_k}
\newcommand{\Kl}{K_{\ell}}
\newcommand{\R}{\mathbb{R}}
\newcommand{\HKM}{H.K.M.}
\newcommand{\Gl}{G_{\ell}}
\newcommand{\Gil}{\Gi(\txl)}
\newcommand{\hGl}{\widehat{G}_{\ell}}
\newcommand{\hYl}{\widehat{Y}_{\ell}}
\renewcommand{\P}{\mathcal{P}}
\newcommand{\rTheta}{{\rm\Theta}}
\newcommand{\W}{\mathcal{W}}
\renewcommand{\l}{\ell}
\newcommand{\tw}{\tilde{w}}
\newcommand{\tmu}{\tilde{\mu}}
\newcommand{\twl}{\tilde{w}^{\ell}}
\newcommand{\txl}{\tilde{x}^{\ell}}
\newcommand{\tmul}{\tilde{\mu}_{\ell}}
\newcommand{\tYl}{\widetilde{Y}_{\ell}}
\newcommand{\tP}{\widetilde{P}}
\newcommand{\tPl}{\tP_{\ell}}
\newcommand{\kfr}{k+\fr}
\renewcommand{\epsilon}{\varepsilon}
\newcommand{\Wl}{W_{\ell}}
\newcommand{\rO}{{\rm O}}
\newcommand{\ro}{{\rm o}}
\renewcommand{\phi}{\varphi}
\newcommand{\Deltac}{\Delta_{\rm c}}
\newcommand{\Deltap}{\Delta_{\rm t}}
\newcommand{\G}{\mathcal{G}}
\newcommand{\hGi}{\widehat{\mathcal{G}}_i}
\def\ru[#1][#2]{{#1}^{#2}}
\def\rl[#1][#2]{{#1}_{#2}}
\renewcommand{\hat}{\widehat}
\newcommand{\N}{\mathcal{N}}
\newcommand{\hY}{\widehat{Y}}
\newcommand{\mS}{\mathbb{S}}
\newcommand{\hU}{U}
\newcommand{\olmu}{\ol{\mu}}
\newcommand{\olw}{w}
\newcommand{\hG}{\widehat{G}}
\newcommand{\Wcal}{\mathcal{W}}
\newcommand{\cref}{\ref}
\spnewtheorem{assumption}{Assumption}{\bf}{\it}
\spnewtheorem{PropA}{Proposition}[section]{\bf}{\it}
\journalname{}
\begin{document}

\title{Local convergence of primal-dual interior point methods for nonlinear semidefinite optimization using the Monteiro-Tsuchiya family of search directions  
\thanks{
This research was supported in part by Grant-in-Aid for Young Scientists 20K19748 and 
Grant-in-Aid for Scientific Research (C)20H04145 from JSPS KAKENHI.}
}

\titlerunning{Interior point method for NSDPs based on the MT family}        

\author{Takayuki Okuno}


\institute{T. Okuno \at
Faculty of Science and Technology Department of Science and Technology, Seikei University, Tokyo, Japan\\
              Center for Advanced Intelligence Project, RIKEN, Tokyo, Japan\\ \email{takayuki-okuno@st.seikei.ac.jp}           
}

\date{Received: date / Accepted: date}

\maketitle

\begin{abstract}
The recent advance of algorithms for nonlinear semidefinite optimization problems (NSDPs) is remarkable.
Yamashita et al. first proposed a primal-dual interior point method (PDIPM) for solving NSDPs using the family of Monteiro-Zhang (MZ) search directions.
Since then, various kinds of PDIPMs have been proposed for NSDPs, but, as far as we know, all of them are based on the MZ family.
In this paper, we present a PDIPM equipped with the family of Monteiro-Tsuchiya (MT) directions, which were originally devised for solving linear semidefinite optimization problems as were the MZ family. We further prove local superlinear convergence to a Karush-Kuhn-Tucker point of the NSDP in the presence of certain general assumptions on scaling matrices, which are used in producing the MT search directions. Finally, we conduct numerical experiments to compare the efficiency among members of the MT family.
\keywords{Nonlinear semidefinite optimization problem \and primal-dual interior point method \and Monteiro-Tsuchiya family of directions \and local convergence}
\end{abstract}

\section{Introduction}
In this paper, we consider the following nonlinear semidefinite optimization problem:
\begin{align}\label{al:nsdp}
\begin{array}{rcl}
\displaystyle\mathop{\rm Minimize}& &f(x)\\
                              \mbox{subject~to}& &G(x)\in \mS^m_+,\\
                                               & &h(x) = 0,
\end{array}
\end{align}
where
$f:\R^n\to \R$, $G:\R^n\to \mS^m$, and $h:\R^n\to \R^{\dimh}$
are twice continuously differentiable functions. Also, $\mS^m$ denotes the set of real 
$m\times m$ symmetric matrices
and $\mS^m_{++}$($\mS^m_+$) stands for the set of 
$m\times m$ real symmetric positive definite (positive semidefinite) matrices.
Throughout the paper, we
often refer to problem\,\eqref{al:nsdp} as NSDP.

When $G$ takes a diagonal matrix form, 
$G(x)\in \mS^m_+$ means all the diagonal elements of $G(x)$ are nonnegative, and hence the NSDP becomes a standard nonlinear optimization problem.
When all the functions are affine with respect to $x$,
the NSDP reduces to the linear semidefinite optimization problem (LSDP). 
The LSDP has been extensively studied on the aspects of theory, algorithms, and applications so far, as they are very powerful tools in various fields, one of which being combinatorial optimization. 
See \cite{wolkowicz2012handbook,vandenberghe1996semidefinite} for a comprehensive survey on the LSDP. 
On the other hand, 
studies on the NSDP itself have also advanced significantly in the 2000s.
The NSDP arises in a wide variety of  applications
such as 
structural optimization\,\cite{kovcvara2004solving}, control\,\cite{scherer1995multiobjective,kovcvara2005nonlinear,hoi2003nonlinear,leibfritz2006reduced}, statistics\,\cite{qi2006quadratically}, finance\,\cite{konno2003cutting,leibfritz2009successive}, and so on.
Elaborate theoretical results on optimality for the NSDP have been also developed. For example, the Karush-Kuhn-Tucker (KKT) conditions and the second-order conditions for the NSDP were studied in detail by Shapiro\,\cite{shapiro1997first} and Forsgren\,\cite{forsgren2000optimality}. 
Further examples include the strong second-order conditions by Sun\,\cite{sun2006strong}, sequential optimality conditions by Andreani et al\,\cite{andreani2018optimality}, and the local duality by Qi\,\cite{qi2009local}.
Along with such theoretical results, various algorithms have been proposed for solving the NSDP, for example, augmented Lagrangian methods\,\cite{kovcvara2004solving,sun2006properties,sun2008rate,andreani2018optimality,fukuda2018exact,huang2006approximate},
sequential linear semidefinite programming methods\,\cite{kanzow2005successive}, sequential quadratic semidefinite programming methods\,\cite{correa2004global,freund2007nonlinear,zhao2016superlinear,yamakawa2019stabilized,zhao2020line},
sequential quadratically constrained quadratic semidefinite programming methods\,\cite{auslender2013extended},
interior point-type methods\,\cite{jarre2000interior,leibfritz2002interior,yamashita2012local,yamashita2012primal,yamakawa1,yamakawa2,yamashita2020primal,kato2015interior,okuno2020interior,okuno2018primal}, and so forth. 

Let us review existing studies on primal-dual interior point methods (PDIPMs) for the NSDP in more detail.
Similar to the path-following PDIPMs for the LSDP, the fundamental framework of the existing PDIPMs for the NSDP is to approach a KKT triplet of the NSDP, by approximately computing perturbed KKT triplets, and by driving a perturbation parameter to zero.
These perturbed KKT triplets are called Barrier KKT (BKKT) triplets, and
the perturbation parameter is called a barrier parameter.
To the best of the author's knowledge, the first PDIPM for the NSDP was presented by Yamashita, Yabe, and Harada\,\cite{yamashita2012primal}, who showed its global convergence to a KKT triplet of the NSDP under some assumptions. Its local convergence property was analyzed by Yamashita and Yabe in \cite{yamashita2012local}, who proved its superlinear convergence for the Alizadeh-Haeberly-Overton (AHO) directions, and two-step superlinear convergence for the Nesterov-Todd (NT) and Helmberg-Rendle-Vanderbei-Wolkowicz/Kojima-Shindoh-Hara/Monteiro (\HKM) directions. 
Those directions are direct extensions of the AHO\,\cite{alizadeh1998primal}, NT\,\cite{nesterov1998primal,todd1998nesterov}, and \HKM directions\,\cite{
helmberg1996interior,kojima1997interior,monteiro1997primal} for the LSDP.
These advances spurred a great deal of interest in PDIPMs for the NSDP.
Kato et al\,\cite{kato2015interior} presented the primal-dual quadratic penalty function as a merit function for the global convergence. Yamakawa and Yamashita\,\cite{yamakawa1} introduced the shifted BKKT conditions as an alternative to the BKKT conditions for the NSDP, and in \cite{yamakawa2} they also showed the two-step superlinear convergence of a PDIPM which uses two kinds of search directions produced by solving two scaled Newton equations sharing the same Jacobian.
In \cite{yamashita2020primal}, Yamashita, Yabe, and Harada integrated a trust-region technique into a PDIPM.
Okuno and Fukushima\,\cite{okuno2020interior,okuno2018primal} considered special NSDPs which posses an infinite number of convex inequality constraints, and proposed a PDIPM coupled with sequential quadratic programming methods.

  At the time of writing this work, all the existing PDIPMs for the NSDP are based on the Monteriro-Zhang (MZ) family of search directions, which was introduced by 
  Zhang\,\cite{zhang1998extending} in the framework of the LSDP to analyze PDIPMs using different search directions such as {\HKM} and AHO directions in a unified manner.
 The MZ family of search directions is briefly described: from the KKT conditions, 
   the semidefinite complementarity condition $G(x)Y=0$ with $G(x),Y\in \mS^m_+$ holds, where $Y$ is a dual matrix variable in $\mS^m$.
   Perturbing the right-hand side of this equation in terms of a parameter $\mu>0$, we have $G(x)Y=\mu I$ together with $G(x),Y\in \mS^m_{++}$. 
   Symmetrizing the former equation,
   we obtain $G(x)Y+YG(x)=2\mu I$, where $I$ denotes the identity matrix in $\R^{m\times m}$. Moreover, scaling $G(x)$ and $Y$ to $PG(x)P^{\top}$ and
    $P^{-\top}YP^{-1}$ with a scaling matrix $P\in \R^{m\times m}$ being nonsingular, respectively, we obtain
    $$PG(x)YP^{-1} + P^{-\top}YG(x)P = 2\mu I.$$
    A direction in the MZ family is a solution of Newton's equation to this equation along with other equations derived from the KKT conditions.
    When $P:=I$, the corresponding direction is the AHO direction. See Section\,\ref{subsec:MZ} for more details.

The purpose of the present paper is to develop PDIPMs for the NSDP using a different family of search directions,
named the Monteiro-Tsuchiya (MT) family, whose members are obtained by applying the Newton method to 
$$(PG(x)P^{\top})^{\fr}P^{-\top}YP^{-1}(PG(x)P^{\top})^{\fr}= \mu I.$$
The MT family was initiated by Monteiro and Tsuchiya\,\cite{monteiro1999polynomial} for solving the LSDP.
When $P=I$, they proved the polynomial convergence of the proposed PDIPMs with the short- and semilong-step strategies. 
This shows superiority to the AHO direction in the MZ, for which only the short-step PDIPM was shown to be polynomially convergent by Monteiro\,\cite{monteiro1998polynomial}.
Moreover, they showed the polynomial convergence of short- and semilong step PDIPMs for the whole MT family, and proved the same for the long-step PDIPM for the MT* family, a certain sub-family of the MT.  
The reader may be referred to \cite{monteiro1999polynomial,mj1999study,monteiro1999implementation,kakihara2014curvature,lu2005error} for relevant works on the MT family for the LSDP.

A motivation of considering the MT family in the context of the NSDP is that 
any member of the MT family is a decent direction of a certain merit function 
to measure the barrier KKT optimality. This indicates huge potential for the globally convergent PDIPM for the whole MT family by using this merit function, although 
we will focus on the local convergence in the current paper. On the other hand,
such a function has not been discovered so far as for the MZ family. The existing global convergence analyses of the PDIPMs using the MZ family for the NSDP are established only for search directions corresponding to scaling matrices $P$ such that scaled $G(x)$ and $Y$ commute\,\cite{okuno2020interior,yamakawa1,yamashita2020primal}.
This fact may imply a theoretical advantage of the MT over the MZ. 
For detailed discussion, see Section\,\ref{sec:MT}.

The PDIPM which will be presented mainly consists of two steps: The first is a tangential step, in which we move towards the set of KKT triplets of the NSDP along a direction tangential to
a central path that is a set of points satisfying the barrier KKT (BKKT) conditions.
Based on the step size with which we move in this direction, we determine the next barrier parameter. 
The second step is a centering step, in which we move on to the next iterate improving the deviation from the set of BKKT triplets with the updated barrier parameter.
These two steps are constructed from MT search directions, and their corresponding step sizes are adjusted in order to ensure that the primal-dual iterates remain in
the interior of the semidefinite cone constraints.
In this paper, the local superlinear convergence of such a PDIPM is of our interest, and we leave the global convergence to future studies.
The local convergence analysis is specialized to the MT family of search directions and different from the existing analysis of the PDIPMs\,\cite{yamashita2012local,yamakawa2,okuno2018primal,okuno2020interior} using the MZ search directions. For details, see Remark\,\cref{rem:1}.

The remainder of the paper is organized as follows:
In Section\,\cref{sec:2}, we introduce the MT family for the NSDP together with its key members.  
In Section\,\cref{sec:3}, we present a PDIPM using the MT family of search directions.
In Section\,\cref{sec:local}, we show local superlinear convergence of the presented PDIPM based on the MT family.	
In Section\,\cref{sec:num}, we conduct some numerical experiments to compare efficiency of members of the MT family. 
Finally, in Section\,\cref{sec:con}, we conclude this paper with some remarks.
The proofs of some lemmas and propositions in Section\,\cref{sec:local} are provided in Appendix.
\section*{Notations and terminologies}
We denote the identity and zero matrices in $\R^{m\times m}$ by $I$ and $O$, respectively.
For $A\in \R^{m\times m}$, we define ${\rm Sym}(A):=(A+A^{\top})/2$ and $\|A\|_{\rm F}:=\sqrt{{\rm trace}(A^{\top}A)}$.
For $X,Y\in \mS^m$, we define the inner product $X\bullet Y$ by $X\bullet Y:= {\rm trace}(XY)$. We also define the linear operator $\mathcal{L}_X:\mS^m\to \mS^m$ by 
$\mathcal{L}_X(Y):=XY+YX$. We denote the smallest eigenvalue of $X\in \mS^m$ by $\lambda_{\rm min}(X)$.
For $X\in \mS^m_{+}$ and $r>0$, we denote by $X^{\frac{1}{r}}$ the unique solution $U\in \mS^m_+$ of $U^r=X$. 
For a function $g:\R^n\to \R$, we denote by $\nabla g(x)$ or $\nabla_x g(x)$ the gradient of $g$, namely, $\nabla g(x):=(\frac{\partial g(x)}{\partial x_1},\ldots,
\frac{\partial g(x)}{\partial x_n}
)^{\top}\in \R^n$ and, also denote by $\nabla^2_{xx}g(x)$ the hessian of $g$, namely, $\nabla^2_{xx}g(x)=(\frac{\partial^2 g(x)}{\partial x_i\partial x_j})_{1\le i,j\le n}\in \R^{n\times n}$.
For sequences $\{a_{k}\}, \{b_k\}\subseteq \R$, 
we write $a_k={\rm O}(b_k)$ if there exists some $M>0$ such that 
$|a_k|\le M|b_k|$ for all $k$ sufficiently large, and write $a_k={\rm o}(b_k)$ if there exists some negative sequence $\{\alpha_k\}\subseteq \R$ such that 
$\lim_{k\to \infty}\alpha_k=0$ and 
$|a_k|\le \alpha_k|b_k|$ for all $k$ sufficiently large. We also say $a_k={\rm \Theta}(b_k)$ if there exist $M_1,M_2>0$ such that
$M_1|b_k|\le |a_k|\le M_2|b_k|$ for all $k$ sufficiently large.   

For $i=1,2,\ldots,n$, we write
$$\Gi(x):=\frac{\partial G(x)}{\partial x_i}.$$
We also denote $\R_{++}:=\{a\in \R\mid a>0\}$ and 
\begin{align*}
  \mathcal{W}:=\R^n\times \mS^m\times \R^{\dimh},\
  \mathcal{W}_{++}:=\{
(x,Y,z)\in \mathcal{W}\mid G(x)\in \mS^m_{++}, Y\in \mS^m_{++}
  \}.
\end{align*}
Additionally, letting $\W_+$ be the set obtained by replacing $\mS^m_{++}$ with $\mS^m_+$ in $\W_{++}$ and $w:=(x,Y,z)\in \W$, we write $\|w\|:=\sqrt{\|x\|_2^2+\|Y\|_{\rm F}^2+\|z\|_2^2}$, where $\|\cdot\|_2$ represents the Euclidean norm. 

\section{Preliminaries}\label{sec:2}
\subsection{The KKT and BKKT optimality conditions}
We begin by introducing the Karush-Kuhn-Tucker (KKT) conditions for NSDP\,\eqref{al:nsdp}.
\begin{definition}
We say that the the Karush-Kuhn-Tucker (KKT) conditions for NSDP\,\eqref{al:nsdp}
hold at $x\in \R^n$ if there exist a Lagrange multiplier matrix $Y\in \mS^m$ and a vector $z\in \R^{\dimh}$ such that 
\begin{subequations}
\begin{align}
&\nabla_xL(w)=\nabla f(x) - \mathcal{J}G(x)^{\ast}Y+\nabla h(x)z=0,
\label{al:kkt1}\\
&G(x)\bullet Y = 0,\ G(x)\in \mS^m_+,\ Y\in \mS^m_+,\label{al:comp}\\
&h(x) = 0, 
\label{al:kkt3}
\end{align}
\end{subequations}
where
$w:=(x,Y,z)\in \mathcal{W}$, $\mathcal{J}G(x)^{\ast}Y:=(\G_1(x)\bullet Y,\G_2(x)\bullet Y,\ldots,\G_n(x)\bullet Y)^{\top}$ and $L:\mathcal{W}\to \R$ denotes the
Lagrangian function for the NSDP, that is, 
\begin{equation}
L(w):=f(x) - G(x)\bullet Y + h(x)^{\top}z \label{eq:lo}
\end{equation}
for any $w\in\mathcal{W}$.
Particularly, we call a triplet $w=(x,Y,z)$ satisfying the KKT conditions a KKT triplet of NSDP\,\eqref{al:nsdp}. 
\end{definition}
Below, we define the Mangasarian-Fromovitz constraint qualification (MFCQ), under which 
the KKT conditions are necessary optimality conditions of the NSDP. 
\begin{definition}
Let ${x}\in \R^n$ be a feasible point of NSDP\,\eqref{al:nsdp}.
We say that
the Mangasarian-Fromovitz constraint qualification (MFCQ) holds at ${x}$
if 
$\nabla h({x})$ is of full column rank and 
there exists a vector $d\in \R^n$ such that $\nabla h(x)^{\top}d=0$ and 
$G({x})+\mathcal{J}G({x})d\in \mS^m_{++}$, 
where $\mathcal{J}G({x})d:=\sum_{i=1}^nd_i\Gi({x})$.
\end{definition}
According to \cite[Corollary~2.101]{bonnans2013perturbation}, the MFCQ is equivalent to Robinson's constraint qualification that is obtained by replacing $G({x})+\mathcal{J}G({x})d\in \mS^m_{++}$ with $0\in {\rm int}(G({x})+\mathcal{J}G({x})d-\mS^m_+)$ in the MFCQ.  
\begin{remark}
Let ${x}\in \R^n$ be a local optimum of NSDP\,\eqref{al:nsdp}.
Under the MFCQ, the KKT conditions hold at ${x}$. 
Conversely, if $f$ is convex, $h$ is affine, and $G$ is matrix convex in the sense of \cite[Section~5.3.2]{bonnans2013perturbation}, ${x}$ satisfying the KKT conditions is a global optimum of \eqref{al:nsdp}.
\end{remark}
The semidefinite complementarity condition\,\eqref{al:comp} 
{can be represented equivalently as 
\begin{equation}
G(x)Y=O,\ G(x)\in \mS^m_+,\ Y\in \mS^m_+.\label{eq:GY}
\end{equation}
We perturb \eqref{eq:GY} in terms of a parameter $\mu>0$ as 
\begin{align}
&G(x)Y=\mu I,\ G(x)\in \mS^m_{++},\ Y\in \mS^m_{++}.\label{eq:bGY1}
\end{align}
\begin{definition}
Let $\mu >0$.
We say that the barrier KKT (BKKT) conditions 
hold at $x$ for NSDP\,\eqref{al:nsdp} if there exists  
$(Y,z)\in \mS^m\times \R^{s}$ which satisfies \eqref{al:kkt1}, \eqref{al:kkt3}, and \eqref{eq:bGY1}.
In particular, we often call $\mu$ and $w=(x,Y,z)$ satisfying the BKKT conditions 
a barrier parameter and a BKKT triplet, respectively. 
\end{definition}
There exist equivalent representations for \eqref{eq:bGY1} for all $\mu>0$, among which
the following two expressions are given for later use:  
\begin{equation}
{\rm Sym}\left(G(x)Y\right)=\mu I,\ G(x)\in \mS^m_{++},\ Y\in \mS^m_{++}\label{eq:aho}
\end{equation}
and 
\begin{equation}
G(x)^{\frac{1}{2}}YG(x)^{\frac{1}{2}}=\mu I,\ G(x)\in \mS^m_{++},\ Y\in \mS^m_{++}.\label{eq:MT}
\end{equation} 
}
\subsection{The MZ family of  search directions for the NSDP}\label{subsec:MZ}
When we are concerned with standard nonlinear optimization, it is common to apply the Newton method to the equation system consisting of the BKKT conditions\,\eqref{al:kkt1}, \eqref{al:kkt3}, and the first equation of \eqref{eq:bGY1} to generate search directions. However, 
it does not work for the NSDP, because the number of equations is $n+m^2+s$ in the system whereas that of the variables $(x,Y,z)\in \W$ is $n+\frac{m(m+1)}{2}+s$, which is less than the former when $m\ge 2$.

A remedy for this issue is to use the BKKT conditions replacing \eqref{eq:bGY1} with \eqref{eq:aho}, which results in the search direction called the AHO direction. 
This was originally presented by Alizadeh, Haeberly, and Overton\,\cite{alizadeh1998primal} for the LSDP.
The Monteiro-Zhang (MZ) family is the set of scaled AHO directions obtained by applying the Newton method to the BKKT conditions, scaling \eqref{eq:aho} further with a nonsingular matri $P\in \R^{m\times m}$ as 
\begin{equation}
  {\rm Sym}\left(\hG(x)\hY\right)=\mO,\ \hG(x)\in \mS^m_+,\ \hY\in \mS^m_+,\label{eq:saho}
\end{equation}
where
\begin{equation}
\widehat{G}(x):=PG(x)P^{\top},\ \widehat{Y}:=P^{-\top}YP^{-1}.\label{eq:scalegxy}
\end{equation}
A search direction $(\Delta x,\Delta Y)\in \R^n\times \mS^m$ belonging to the MZ family is a solution to 
\begin{equation}
  {\rm Sym}\left(\hG(x)P^{-\top}\Delta YP^{-1}+ \sum_{i=1}^n\Delta x_i\widehat{\Gi}(x)\hY\right)=\mu I - {\rm Sym}\left(\hG(x)\hY\right),\label{eq:mz}
\end{equation}
where 
\begin{equation}
\widehat{\G}_i(x):=P\Gi(x)P^{\top}\ (i=1,2,\ldots,n).\label{eq:sGi}
\end{equation}
The MZ family was originally developed to analyze different search directions in a unified manner for the LSDP\,\cite{zhang1998extending,monteiro1998unified}. 
Indeed, the NT and {\HKM} directions are obtained as members of the MZ family by setting 
$P=W^{-\fr}, G(x)^{-\fr}$, respectively, where
\begin{equation}
W:=G(x)^{\fr}(G(x)^{\fr}YG(x)^{\fr})^{-\fr}G(x)^{\fr}.\footnote{We have the identity $W=Y^{-\fr}(Y^{\fr}G(x)Y^{\fr})^{\fr}Y^{-\fr}$.}\label{eq:W}
\end{equation}
In the context of the NSDP, Yamashita et al.\,\cite{yamashita2012primal,yamashita2012local} were 
the first to study PDIPMs using the MZ family.

\subsection{The MT family of search directions for the NSDP}\label{sec:MT}
In this section, we present the Monteiro-Tsuchiya (MT) family of search directions that are tailored to NSDP\,\eqref{al:nsdp} and introduce its important members.
{Scaling $G(x)$ and $Y$ as \eqref{eq:scalegxy} in condition\,\eqref{eq:MT}, we have
\begin{equation}
\hat{G}(x)^{\frac{1}{2}}\hat{Y}\hat{G}(x)^{\frac{1}{2}}=\mu I,\ \hG(x)\in \mS^m_{++},\ \hY\in \mS^m_{++}.\label{eq:sbMT}
\end{equation}
A member of the MT family is produced by applying the Newton method to 
the equation system comprising 
\eqref{al:kkt1}, \eqref{al:kkt3}, and  the first equation of \eqref{eq:sbMT}.
Applying the first-order Taylor's expansion to the first equation in \eqref{eq:sbMT} yields the following linear equation in $(\Delta x,\Delta {Y})\in \R^n\times \mS^m$:
\begin{equation}
\hat{G}(x)^{\frac{1}{2}}\left(P^{-\top}\Delta {Y}P^{-1}\right)\hat{G}(x)^{\frac{1}{2}}
+ \hU\hat{Y}\hat{G}(x)^{\frac{1}{2}}
+ \hat{G}(x)^{\frac{1}{2}}\hat{Y}\hU
=\mu I - \hat{G}(x)^{\frac{1}{2}}\hat{Y}\hat{G}(x)^{\frac{1}{2}}, \label{eq:mtlinear}
\end{equation}
where $\hU\in \mS^m$ denotes the G\^ateaux differential for $\widehat{G}(x)^{\frac{1}{2}}$
in the direction $\Delta x$.
In a manner similar to \cite[Lemma~2.2]{monteiro1999polynomial}, 
we can show that the matrix $\hU$ solves the following linear equation parameterized by $\Delta x$:
\begin{equation}
  \left(\hU\hat{G}(x)^{\frac{1}{2}}+\hat{G}(x)^{\frac{1}{2}}\hU=\right)
  \mathcal{L}_{\hG(x)^{\fr}}(U)
= \sum_{i=1}^n\Delta x_i\widehat{\Gi}(x),\label{al:1214-1}
\end{equation}
where $\widehat{\Gi}(x)$ is defined in \eqref{eq:sGi}.
The MT family of search directions for NSDP\,\eqref{al:nsdp} is defined formally as follows:
\begin{definition}
  The Monteiro-Tsuchiya (MT) family of search directions for NSDP\,\eqref{al:nsdp} is the family comprising directions $\Delta w=(\Delta x,\Delta Y, \Delta z)\in \mathcal{W}$ such that $(\Delta x,\Delta Y)$ solves the linear equation\,\eqref{eq:mtlinear} parameterized by a nonsingular scaling matrix $P$ and a barrier parameter $\mu$. 
\end{definition}
The MT family reduces to several important classes of directions by selecting the scaling matrix $P$ appropriately.
Below, we give some members of the MT family together with the corresponding scaling matrices $P$ and relevant equations on $\hat{G}(x)$ and $\hat{Y}$.  
\begin{description}
\item[{\bf MT direction}:] $P=I$,
$
\hat{G}(x)=G(x),\
\hat{Y} = Y.
$
\item[{\bf NT direction}:] $P=Y^{\frac{1}{2}}$,
$
\hat{G}(x)=Y^{\frac{1}{2}}G(x)Y^{\frac{1}{2}},\
\hat{Y} = I.
$
\item[{\bf {\HKM} direction}:] $P = G(x)^{-\frac{1}{2}}$,
$
  \hat{G}(x) = I,\ \hat{Y} = G(x)^{\frac{1}{2}}YG(x)^{\frac{1}{2}}.
$
\item[{\bf {\HKM}-dual direction}:]
$P = (YG(x)Y)^{\frac{1}{2}}$, 
$
  \hat{Y}\hat{G}(x)\hat{Y} = I.
$ 
\item[{\bf MTW direction}:] $P=W^{-\fr}$,
$
\hat{G}(x)=\hat{Y},
$
where $W$ is defined in \eqref{eq:W}.
\end{description}

The above NT and {\HKM} directions correspond with those derived by Yamashita's group \cite{yamashita2012local,yamashita2012primal} from
\eqref{eq:mz} by setting $P=W^{-\fr}, G(x)^{-\fr}$, respectively. The NT, {\HKM}, and {\HKM}-dual directions are also members of the MZ family.
Clear theoretical advantages and disadvantages of each direction when compared with others have not been elucidated in the context of the NSDP. 
But, some remarks on numerical aspects will be described in Section\,\ref{subsec:remarks}. 
\subsection{Motivation of using the MT family}
Let $P\in \R^{m\times m}$ be a scaling matrix and define $\Xi_{\mu}^P:\mathcal{W}\to \mathcal{W}$ by
\begin{equation}
\Xi_{\mu}^P(w):=\begin{bmatrix}
\nabla_xL(w)\\
\hG(x)^{\fr}\hY\hG(x)^{\fr}-\mu I\\
h(x) 
\end{bmatrix},
\label{eq:Xip}
\end{equation}
where $\mu>0$ is a barrier parameter and the function $L:\mathcal{W}\to \mathcal{W}$ is the Lagrangian function for NSDP\,\eqref{al:nsdp}.
Notice that if $\Xi_{\mu}^P(w)=0$ and $w\in \W_+$, $w$ is a BKKT triplet for $\mu>0$ and a KKT triplet for $\mu=0$. 
Denote the Jacobian of $\Xi_{\mu}^P(w)$ by $\mathcal{J}\Xi_{\mu}^P(w)$.
If the Newton equation $\mathcal{J}\Xi_{\mu}^P(w)\Delta w= - \Xi_{\mu}^P(w)$ is uniquely solvable,  
$\Delta w$ is a descent direction of $\|\Xi_{\mu}^P(w)\|^2=\|\nabla_xL(w)\|_2^2+\|\hG(x)^{\fr}\hY\hG(x)^{\fr}-\mu I\|_{\rm F}^2+\|h(x)\|_2^2$.
Hence, $\|\Xi_{\mu}^P(w)\|^2$ can be decreased by proceeding along $\Delta w$.
This implies that $\|\Xi_{\mu}^P(w)\|^2$ with $P=I$, namely $\|\Xi_{\mu}^I(w)\|^2$, can be decreased too, because $\|\Xi_{\mu}^P(w)\|^2=\|\Xi_{\mu}^I(w)\|^2$ follows from the fact that $\|G(x)^{\fr}YG(x)^{\fr}-\mu I\|_{\rm F}$ is scale invariant in the sense of 
$$
\|\hG(x)^{\fr}\hY\hG(x)^{\fr}-\mu I\|_{\rm F}=\|G(x)^{\fr}YG(x)^{\fr}-\mu I\|_{\rm F}.
$$
This property is significant since it implies huge potential for gaining a globally convergent PDIPM for the whole MT family by utilizing $\|\Xi_{\mu}^I(w)\|$ as a merit function to measure the deviation of $w$ from the set of BKKT triplets with barrier parameter $\mu$.
On the other hand, such a merit function for the MZ family has not yet been discovered.
In the existing works on PDIPMs using the MZ family\,\cite{yamakawa1,yamashita2012primal},
the global convergence to BKKT triplets is established for only a particular class of search directions produced with $P$ such that $\hG(x)$ and $\hY$ commute.
From this viewpoint, it is beneficial to develop the MT family in the context of the NSDP.   
}

\section{Primal-dual interior point method based on the MT family of search directions}\label{sec:3}
In this section, we present a PDIPM using the MT family of search directions.
Similar to many classical PDIPMs, the algorithm seeks to find a KKT triplet of NSDP~\eqref{al:nsdp} by closely tracking a so-called central path that is formed by BKKT triplets of NSDP~\eqref{al:nsdp} and driving a barrier parameter to zero. 
\subsection{Description of the proposed algorithm}
Recall $\Wcal_{++}=\{w\in \Wcal\mid G(x)\in \mS^m_{++},Y\in \mS^m_{++}\}$.
Given $\olmu>0$ and $\ol{w}\in \Wcal_{++}$, we produce two search directions $\Deltap \ol{w}$ and $\Deltac \ol{w}$ by solving certain scaled Newton equations based on the MT family which will be shown shortly.
We refer to steps along these directions as {\it tangential} and {\it centering} steps, respectively.
We then produce an intermediate point, say $\ol{w}_{\fr}$, and the next point, say $\ol{w}_+$,
by
\begin{align}
\ol{w}_{\fr}:=\ol{w} + s_{\rm t}\Deltap\ol{w},\ \ \ol{w}_+:=\ol{w}_{\fr} + s_{\rm c}\Deltac\ol{w},\label{al:wfrwplus}
\end{align}
where $s_{\rm t},s_{\rm c}\in (0,1]$ are step-sizes. The above-mentioned Newton equations are described as follows.
For $\mu>0$ and $P\in \R^{m\times m}$ being nonsingular,
let $\Xi_{\mu}^P:\W\to \W$ be the function defined in \eqref{eq:Xip} and consider the following linear equations:
\begin{subequations}
\begin{align}
&\mathcal{J}\Xi_{{\mu}}^P(w)\Deltap w =
\begin{bmatrix}
0\\
-{\mu} I\\
0
\end{bmatrix},\ 
\label{eq:pre3}\\
&\mathcal{J}\Xi_{{\mu}}^P(w)\Delta_{\rm c}w = -\Xi_{\mu}^P({w}),\label{eq:corr4}
\end{align}
\end{subequations}
where $\mathcal{J}\Xi_{{\mu}}^P(w)$ denotes the Jacobian of $\Xi_{{\mu}}^P(w)$ with respect to $w$. 
The scaled Newton equations that we solve
for obtaining the directions $\Deltap \ol{w}$ and $\Deltac \ol{w}$ are actually the linear equation\,\eqref{eq:pre3} with $(w,\mu)=(\ol{w},\ol{\mu})$
and equation\,\eqref{eq:corr4} with $(w,\mu)=(\ol{w}_{\fr},\ol{\mu}_+)$, respectively. Here, we define $$\olmu_+:=(1-s_t)\olmu.$$

Let us consider how $\ol{w}_{\fr}$ and $\olmu_+$ may be interpreted. 
Perturb the BKKT equation $\Xi_{{\mu}}^P(w)=0$ to 
$\Xi_{{\mu}}^P(w)=\Xi_{\olmu}^P(\ol{w})$ and assume $\mathcal{J}\Xi_{\olmu}^P(\ol{w})$ to be nonsingular. 
By the implicit function theorem, 
there exist a scalar $\delta_{\ol{\mu}}>0$
and a smooth curve 
$v(\cdot):[\olmu-\delta_{\ol{\mu}},\olmu+\delta_{\ol{\mu}}]\to \Wcal_{++}$
such that $v(\olmu)=\ol{w}$ and 
\begin{equation}
\Xi_{\mua}^P(v(\mu))=\Xi_{\olmu}^P(\ol{w}),\ \ \forall\mua\in [\olmu-\delta_{\ol{\mu}},\olmu+\delta_{\ol{\mu}}].
\label{eq:0104}
\end{equation}
Compared to $v(\olmu)$, 
$v(\olmu-\delta_{\ol{\mu}})$ can be a better approximation to a KKT triplet in view of the fact that $\|\Xi_{0}^P(v(\olmu-\delta_{\ol{\mu}}))\|$ is upper-bounded by a smaller quantity than that of $\|\Xi_{0}^P(v(\olmu))\|$.
Indeed,
\begin{align*}
\|\Xi_{0}^P(v(\olmu-\delta_{\ol{\mu}}))\|
&\le \|
\Xi_{\olmu-\delta_{\ol{\mu}}}^P(v(\olmu-\delta_{\ol{\mu}}))
\|+\|(\olmu-\delta_{\ol{\mu}})I\|_{\rm F}\\
&=\|\Xi_{\olmu}^P(\ol{w})\| + \sqrt{m} (\olmu-\delta_{\ol{\mu}})
\ \ \ (\mbox{by \eqref{eq:0104} with $\mua=\olmu-\delta_{\ol{\mu}}$}),
\end{align*}
while $\|\Xi_{0}^P(v(\olmu))\|=\|\Xi_{0}^P(\ol{w})\|\le \|\Xi_{\olmu}^P(\ol{w})\| + \sqrt{m}\olmu$.
Inspired by this observation, we trace the curve $v$ by making
a step along the tangential direction $\dot{v}(\olmu):=\frac{d}{d\mu}v(\olmu)$.
Actually, $\dot{v}(\olmu)$ is the solution of the equation
$\mathcal{J}\Xi_{\olmu}^P(v(\olmu))^{\ast}\dot{v}(\olmu)=[0,I,0]^{\top}$, which is obtained by differentiating equation~\eqref{eq:0104} with respect to $\mua$ at $\mua=\olmu$ and using the relation $\frac{d}{d\mu}{\Xi}_{\mu}^P(w)=[0,-I,0]^{\top}$.
Comparing this equation to \eqref{eq:pre3} with 
$(w,\mu):=(\ol{w},\ol{\mu})=(v(\ol{\mu}),\ol{\mu})$, we see $\Deltap \ol{w} = -\olmu\dot{v}(\olmu)$, and thus, \eqref{al:wfrwplus} and $v(\olmu)=\ol{w}$ imply
$$\ol{w}_{\fr}=v(\olmu)-s_{\rm t}\olmu\dot{v}(\olmu)\approx v((1-s_{\rm t})\olmu)=v(\olmu_+).$$
Accordingly, $\ol{w}_{\fr}$ may be interpreted as a first-order approximation of $v(\olmu_+)$.
The idea of the tangential direction is also discussed in \cite[Section~5]{forsgren2002interior} about PDIPMs for nonlinear optimization.

Next, let us consider what role the centering step plays.
Recall that $w\in \mathcal{W}_{++}$ which satisfies $\Xi_{\olmu_+}^P(w)=0$ is a BKKT triplet with $\olmu_+$.
In view of the fact that $\Deltac \ol{w}$ is a solution of the linear equation\,\eqref{eq:corr4} with $(w,\mu)=(\ol{w}_{\fr},\ol{\mu}_+)$, we may expect to get closer to the set of BKKT triplets with $\mua=\olmu_+$ by proceeding along $\Deltac \ol{w}$.

For the sake of analysis in the subsequent sections, 
we explicitly define equations~\eqref{eq:pre3} and \eqref{eq:corr4} below.
Here, $\Deltap w$ and $\Deltac w$ in \eqref{eq:pre3} and \eqref{eq:corr4}
are simply expressed as $\Delta w=(\Delta x,\Delta Y,\Delta z)$.
We have:
\begin{subequations}
\begin{align}
&\nabla^2_{xx}L(w)\Delta x  - \mathcal{J}G(x)^{\ast}\Delta Y
+\nabla h(x)\Delta z
=
\begin{cases}
0\ &\mbox{for \eqref{eq:pre3}}\\
- \nabla_xL(w)\ &\mbox{for \eqref{eq:corr4}},
\end{cases}
\label{al:scalednewton-1}
\\
&\hat{G}(x)^{\frac{1}{2}}\left(P^{-\top}\Delta {Y}P^{-1}\right)\hat{G}(x)^{\frac{1}{2}}
+ \hU\hat{Y}\hat{G}(x)^{\frac{1}{2}}
+ \hat{G}(x)^{\frac{1}{2}}\hat{Y}\hU
=
\begin{cases}
-{\mu} I\ &\mbox{for \eqref{eq:pre3}}\\
{\mu} I - \hat{G}(x)^{\frac{1}{2}}\hat{Y}\hat{G}(x)^{\frac{1}{2}}\ &\mbox{for \eqref{eq:corr4}},
\end{cases}
\label{al:scalednewton-2}\\
&\nabla h(x)^{\top}\Delta x = \begin{cases}
0\ \ &\mbox{for \eqref{eq:pre3}}\\
-h(x)\ \ &\mbox{for \eqref{eq:corr4}},
\end{cases}
\label{al:scalednewton-4}
\end{align}
\end{subequations}
where $\hU\in \mS^m$ is a solution of \eqref{al:1214-1} and represented as $\hU=\sum_{i=1}^n\Delta x_i\mathcal{L}_{\hG(x)^{\fr}}^{-1}(\hGi(x))$.

Our PDIPM is formally stated as in Algorithm\,\cref{alg4}, in which
scaling matrices $P$ are set through the function $\mathcal{P}:\W_{++}\to \R^{m\times m}$ such that $\mathcal{P}(w)$ is nonsingular for any $w\in \W_{++}$.
With appropriate $\mathcal{P}$,
we can set $P$ to either of MT, {\HKM}, {\HKM-dual}, {NT}, and MTW scaling matrices. For example,
we obtain the {\HKM}-dual scaling matrix $P$ by letting $\mathcal{P}(w):=(YG(x)Y)^{\fr}$.
In Section~\ref{subsec:mainr}, we give specific conditions on $ \mathcal{P}$ under which the local convergence can be established.
The step-sizes $s_{\rm t}$ and $s_{\rm c}$ in \eqref{al:wfrwplus} are computed as in Lines~\ref{al:skt} and \ref{al:skc} such that $G(x)\in \mS^m_{++}$ and $Y\in \mS^m_{++}$, i.e., updated points remain strictly feasible. 
Hereafter, we will study the local convergence of this algorithm. 
Under mild assumptions, we will prove that 
$\bar{\ell}_k=\bar{m}_{\kfr}=0$ eventually holds, which together with other several properties finally establishes the superlinear convergence to a KKT triplet. 

If the function $G$ is affine (e.g., $G(x)=F_0+\sum_{i=1}^nx_iF_i$ with $F_i\in \mS^m$ for $i=0,1,\ldots,n$),  
the step sizes $s_k^{\rm t}$ and $s_k^{\rm c}$ can be computed in a more systematic way.
Provided with a pair of search directions $(dx,dY)\in \R^n\times \mS^m$ and $(x,Y)$ such that $G(x),Y\in \mS^m_{++}$, 
we have $G(x+sdx), Y+sdY\in \mS^m_{++}$ for any $s\in [0,\bar{s}]$ with
\begin{equation}
\bar{s} = \min(1,s_x,s_Y),
\label{eq:sxsY}
\end{equation}
where
\begin{align*}
s_x&:=
\begin{cases}
\displaystyle{- \frac{0.99}{\lambda_{\rm min}(G(x)^{-1}\sum_{i=1}^ndx_iF_i)}} &\mbox{if }\lambda_{\rm min}(G(x)^{-1}\sum_{i=1}^ndx_iF_i)<0\\
1                                                     &\mbox{otherwise}, 
\end{cases}\\	
s_Y&:=
\begin{cases}
\displaystyle{- \frac{0.99}{\lambda_{\rm min}(Y^{-1}dY)}} &\mbox{if }\lambda_{\rm min}(Y^{-1}dY)<0\\
1                                                     &\mbox{otherwise.} 
\end{cases}
\end{align*}
From this fact, in Line~\ref{al:skt},
it suffices to find the smallest $\bar{\ell}_{k}$ such that 
$\beta^{\bar{\ell}_k}(1- \min(0.95,\mu_k)^{\alpha})\le \bar{s}$
after computing $\bar{s}$ at $w^k$.
In Line~\ref{al:skc}, $\bar{m}_{\kfr}$ can be computed in a similar fashion.

The current algorithm does not set any stopping criterion.
An instance of criterion is to stop the algorithm if $\|\Xi^I_0(w^k)\|\le \varepsilon$ holds, where $\varepsilon>0$ is a preset small parameter.

\paragraph{Globalization of Algorithm\,\cref{alg4}}
Under stringent assumptions, Algorithm\,\cref{alg4} may not be globally convergent as it is.
One common way for endowing it with the global convergence property is to combine Algorithm\,\cref{alg4} and globalization techniques based on a decent method using an appropriate merit function for NSDP~\eqref{al:nsdp}. 
For example, 
the algorithm can attain the global convergence property by inserting the following procedure just before Line~\ref{linescaling} in Algorithm\,\cref{alg4}:
\begin{quote} 
``If $\|\Xi_{\mu_k}^I(w^k)\|>r_k$, then find $v^{k}\in \W_{++}$ such that $\|\Xi_{\mu_k}^I(v^{k})\|\le r_k$ and replace $w^k$ by $v^k$'',
\end{quote}
where $\{r_k\}$ is a positive sequence converging to zero, where each term can be either adaptively determined at each $k$ or a prefixed value. 	
In a similar manner to \cite[Theorem~1]{yamashita2012primal}, 
we can show that $\{v^k\}$ accumulates at KKT triplets under some assumptions including the MFCQ and the boundedness of $x$-components of $\{v^k\}$.
For computing $v^k$, one way is to make use of the line search with $\|\Xi_{\mu_k}^I(w)\|^2$ as a merit function, which will be implemented in the section of numerical experiments.

\begin{algorithm}[tbh]
\caption{MT based primal-dual interior point method (Local algorithm)}
\label{alg4}
\begin{algorithmic}[1]
\Require
Choose
$\alpha,\beta\in (0,1)$ and 
$w^0=(x^0,Y_0,z^0)\in \mathcal{W}_{++}$, $\mu_0>0$,
and $\mathcal{P}:\W_{++}\to \R^{m\times m}$ such that $\mathcal{P}(w)$ is nonsingluar for any $w\in \W_{++}$.
Set $k \leftarrow0$.
\For{$k =0,1,2,\ldots,$}
\State{\bf Scaling:}\label{linescaling} 
Set $P_k\leftarrow \P(w^k)$
and scale $G(x^k)$ and $Y_k$ by 
$$
\widehat{G}(x^k)\leftarrow P_kG(x^k)P_k^{\top},\ \widehat{Y}_k\leftarrow P_k^{-\top}Y_kP_k^{-1}.
$$
\State{\bf Tangential Step:} 
Solve the scaled Newton equation\,\eqref{eq:pre3}
with $(w,\mu,P)= (w^k,\mu_k,P_k)$ to obtain
$\Deltap w^{k}=(\Deltap x^{k},\Deltap Y_{k},\Deltap z^{k})$.
\State\label{line:inn1} Find the smallest integer $\bar{\ell}_k\ge 0$ satisfying 
\begin{align*}
  &G\left(x^k+\beta^{\bar{\ell}_k}(1-\min\left(0.95,\mu_k\right)^{\alpha})\Deltap x^k\right)\in \mS^m_{++}, \notag \\
  &Y_k+\beta^{\bar{\ell}_k}(1-\min(0.95,\mu_k)^{\alpha})\Deltap Y_k\in \mS^m_{++}.
\end{align*}
\State\label{al:skt}
Set $s_k^{\rm t} \leftarrow \beta^{\bar{\ell}_k}(1- \min(0.95,\mu_k)^{\alpha})$ and $w^{k+\fr}\leftarrow w^k + s_k^{\rm t}\Deltap w^k$.
\State{\bf Update the barrier parameter:}
$\mu_{k+1} \leftarrow \left(1-s_k^{\rm t}\right)\mu_k$
\State{\bf Scaling:} 
Set $P_{k+\fr}\leftarrow \P(w^{k+\fr})$
and scale $G(x^{k+\fr})$ and $Y_{k+\fr}$ by 
$$
\widehat{G}(x^{k+\fr})\leftarrow P_{k+\fr}G(x^{k+\fr})P_{k+\fr}^{\top},\
\widehat{Y}_{k+\fr}\leftarrow P_{k+\fr}^{-\top}Y_{k+\fr}P_{k+\fr}^{-1}.
$$
\State{\bf Centering Step:} 
{Solve the scaled Newton equation\,\eqref{eq:corr4} with
  $(w,\mu,P)= (w^{k+\frac{1}{2}},\mu_{k+1},P_{k+\fr})$ to obtain $\Delta_{\rm c}w^{k+\fr}$.}
\State\label{line:inn2} Find the smallest integer ${\bar{m}_{\kfr}}\ge 0$ satisfying 
\begin{align*}
  &G(x^{k}+\beta^{{\bar{m}_{\kfr}}}\Delta_{\rm c}x^{\kfr})\in \mS^m_{++}, Y_{k}+\beta^{{\bar{m}_{\kfr}}}\Delta_{\rm c}Y_{\kfr}\in \mS^m_{++}.
\end{align*}
  \State\label{al:skc} {Set $s^{\rm c}_{\kfr}\leftarrow \beta^{{\bar{m}_{\kfr}}}$ and $w^{k+1}\leftarrow w^{\kfr}+s^{\rm c}_{\kfr}\Deltac w^{\kfr}$.}
  \State{\bf Update:}\ Set $k \leftarrow k+1$
\EndFor
\end{algorithmic}
\end{algorithm}

{\subsection{Some remarks on solving Newton equations}\label{subsec:remarks}
We remark that the system of equations\,\eqref{al:scalednewton-1}-\eqref{al:scalednewton-4} is equivalent to the following:
\begin{subequations}
\begin{align}
&\begin{bmatrix}
\nabla^2_{xx}L(w)+\mathcal{B}(x,Y)&\nabla h(x)\\
\nabla h(x)^{\top}& 0
\end{bmatrix}\begin{bmatrix}
\Delta x\\
\Delta z
\end{bmatrix}\notag\\
=&\begin{cases}
[\mu\mathcal{J}G(x)^{\ast}G(x)^{-1},0]^{\top}\\
[-\mu\mathcal{J}G(x)^{\ast}G(x)^{-1}-\nabla_xL(w),
-h(x)
]^{\top},
\end{cases} \label{al:20230416-1}   \\ 
&\Delta Y=-2\sum_{i=1}^n\Delta x_iP^{\top}{\rm Sym}\left(\hG(x)^{-\fr}
\mathcal{L}_{\hG(x)^{\fr}}^{-1}(\widehat{\G}_i(x))
\hY\right)P,\label{al:20230416-2}
\end{align}
\end{subequations}
where for each $i,j=1,2,\ldots,n$,
$$
\mathcal{B}(x,Y)_{i,j}
:=2{\rm Tr}\left(\widehat{\G}_i(x)\hG(x)^{-\fr}\mathcal{L}_{\hG(x)^{\fr}}^{-1}(\widehat{\G}_j(x))\hY\right).
$$
Hence, the system is uniquely solvable if and only if 
the coefficient matrix in the first equation is nonsingular.  In the numerical experiments, we will actually solve the above system. 	  
In building this system, we need to compute  
$\mathcal{B}(x,Y)$ whose $(i,j)$-th element for each direction is represented as follows, where the argument $x$ is omitted from $G(x)$ and $\G_i(x)$ for simplicity:
\begin{eqnarray*}
&{\rm MT}:
2{\rm Tr}\left( 
Y\G_iG^{-\fr}\mathcal{L}_{G^{\fr}}^{-1}(\G_j)
\right),
&\\
&{\rm NT}:\ {\rm Tr}(\widehat{\G}_i\hG^{-\fr}\widehat{\G}_j),\ 
\mbox{\HKM-dual}:
2{\rm Tr}(\widehat{G}^{-\fr}\widehat{\G}_i\widehat{G}^{-\fr}\mathcal{L}_{\hG^{\fr}}^{-1}(\widehat{\G}_j)),&\\
&{\rm \HKM}:
{\rm Tr}(\widehat{\G}_i\hY\widehat{\G}_j),\
{\rm MTW}:
2{\rm Tr}(\hG\widehat{\G}_i\hG^{-\fr}\mathcal{L}_{\hG^{\fr}}^{-1}(\widehat{\G}_j)).&
\end{eqnarray*}
These matrices $\mathcal{B}(x,Y)$ are symmetric except for the case of MT. 
In general, $\mathcal{B}(x,Y)\in \mS^m$ holds for any search direction corresponding to a scaling matrix $P$ such that $\hG(x)$ and $\hY$ commute.
Therefore, for example, when $f$ is strongly convex,
$G$ is affine, the constraint $h(x)=0$ is absent,
and such $P$ is employed, the coefficient matrix in \eqref{al:20230416-1} is symmetric positive definite, and hence an efficient linear equation algorithm such as the conjugate gradient method is applicable to equation\,\eqref{al:20230416-1}.
Meanwhile, computation of $\mathcal{L}_{(\cdot)}^{-1}(\cdot)$, which is contained in the MT, {\HKM}-dual, and MTW, is often costly.
The reader may be referred to \cite{monteiro1999implementation} for efficient implementation of the MT direction in the framework of the LSDP.
} 

\section{Local convergence analysis of Algorithm\,\cref{alg4}}\label{sec:local}
In this section, we prove local superlinear convergence of Algorithm\,\cref{alg4}.
The analysis is specific to the MT family of search directions.
In particular, its notable features are summarized as in the following remarks.
\begin{remark}\label{rem:1}
\begin{enumerate}
\item \label{rem1} The function $\Xi^P_{\mu}$ is not normally defined 
at a point satisfying $G(x)\in \mS^m\setminus \mS^m_{+}$ due to the presence of $G(x)^{\fr}$, and as a result, neither is its Jacobian on a boundary point, i.e., a point such that $G(x)\in \mS^m_+\setminus \mS^m_{++}$.
This fact causes difficulties on analyzing the limiting behavior of MT search directions.
\item \label{rem2} The analysis will be made under general assumptions (Assumption\,\cref{assum:A}) on scaling matrices used for producing MT search directions.
Whereas Yamashita and Yabe\,\cite{yamashita2012local} presented local convergence analysis for each of AHO, NT, and {\HKM} directions individually,
we can show the superlinear convergence result for a wider class of scaling matrices in a unified manner by virtue of these assumptions.
The assumptions are actually fulfilled by major MT members (Proposition\,\cref{prop:scaling}).
\end{enumerate}
\end{remark}
Henceforth, we assume that
the functions $f$, $G$, and $h$ are three times continuously differentiable, and let $w^{\ast}:=(x^{\ast},Y_{\ast},z^{\ast})$ be a KKT triplet of NSDP\,\eqref{al:nsdp} that satisfies the following regularity conditions. These conditions are all adopted in the recent researches\,\cite{yamashita2012local,yamakawa2,okuno2018primal} on PDIPMs for NSDPs:\vspace{1em}\\
\noindent{\bf Nondegeneracy condition:}
Let 
$r_{\ast}:={\rm rank}\,G(x^{\ast})$ and let $e_i\ (i=1,2,\ldots,m-r_{\ast})$ be an orthonormal basis of the null space of $G(x^{\ast})$. 
Then, the vectors 
$v_{ij}\in \R^n\ (1\le i\le j\le m-r_{\ast})$ and $\nabla h_i(x^{\ast})\ (i=1,2,\ldots,s)$ are linearly independent, where
$$
v_{ij}:=(e_i^{\top}\G_1(x^{\ast})e_j,\cdots, e_i^{\top}\G_n(x^{\ast})e_j)^{\top}\in \R^n\ (1\le i\le j\le m-r_{\ast}).
$$
\noindent{\bf Second-order sufficient condition}:
$\nabla_{xx}^2L(w^{\ast})+H(x^{\ast},Y_{\ast})$ is positive definite on the critical cone $C(x^{\ast})$ at $x^{\ast}$, which is defined by 
\begin{equation*}
C(x^{\ast}):= \left\{d\in \R^n\mid \nabla f(x^{\ast})^{\top}d=0, \nabla h(x^{\ast})^{\top}d=0,
\mathcal{J}G(x^{\ast})d\in T_{\mS^m_+}(G(x^{\ast}))
\right\}, \label{eq:critical}
\end{equation*}
where $T_{\mS^m_+}(G(x^{\ast}))$ denotes the tangent cone of $\mS^m_+$ at $G(x^{\ast})$ and 
$H(x^{\ast},Y_{\ast})$ is the matrix in $\mS^n$ whose entries are given by 
$$
(H(x^{\ast},Y_{\ast}))_{i,j}:= 2Y_{\ast}\bullet \Gi(x^{\ast})G(x^{\ast})^{\dag}\G_j(x^{\ast})
$$
for $i,j=1,2,\ldots,n$.
Here, $G(x^{\ast})^{\dag}$ denotes the Moore-Penrose inverse matrix of $G(x^{\ast})$.\\
\noindent{\bf Strict complementarity condition}: It holds that $G(x^{\ast})+Y_{\ast}\in \mS^m_{++}$.\vspace{0.5em}\\
For detailed explanations of the above three conditions, we refer readers to, e.g., \cite{yamashita2012local, shapiro1997first,bonnans2013perturbation}.
In what follows, we study the local convergence behavior of
Algorithm\,\cref{alg4} in a sufficiently small neighborhood of $w^{\ast}$.
For the sake of analysis, we introduce the following functions 
$\Phi^1_{\mu},\Phi^2_{\mu}:\mathcal{W}\to \mathcal{W}$:
\begin{equation}
{\Phi}^1_{\mu}(w):=\begin{bmatrix}
\nabla_xL(w)\\
G(x)Y-\mu I\\
h(x)
\end{bmatrix},\
{\Phi}^{2}_{\mu}(w):=\begin{bmatrix}
\nabla_xL(w)\\
{\rm Sym}\left(G(x)Y\right)-\mu I\\
h(x)
\end{bmatrix}.
\label{eq:defPhi}
\end{equation}
We often use the following relation: For any $\mu>0$ and $w\in \mathcal{W}$,  
\begin{equation}
\|\Phi_{\mu}^1(w)\|\ge \|\Phi_{\mu}^2(w)\|.\label{eq:0218}
\end{equation}
Under the above three regularity conditions of $w^{\ast}$, the Jacobian of ${\Phi}^2_{0}$ is nonsingular at $w^{\ast}$ by \cite[Corollary~1]{yamashita2012local}. Then, by the implicit function theorem along with the strict complementarity condition, we can ensure that there exist 
a scalar $\bar{\mu}>0$ and a continuous path $v^{\ast}:[0,\bar{\mu}]\to \mathcal{W}_{+}$ such that $v^{\ast}(0)=w^{\ast}$, it is smooth in $(0,\bar{\mu})$,
and $\Phi^2_{\mu}(v^{\ast}(\mu))=0\ (\mu\in [0,\bar{\mu}])$, which indicates that $v^{\ast}(\mu)=(x^{\ast}(\mu),Y_{\ast}(\mu),z^{\ast}(\mu))$ is a BKKT triplet with a barrier parameter $\mu\in (0,\bar{\mu}]$.
Hereafter, we often refer to the smooth path $v^{\ast}$ as the {\it central path} that emanates from $w^{\ast}$. 

Associated with the central path, we define the following set for $r>0$ and $\mu \in (0,\bar{\mu}]$ to measure the deviation of given iterates from the central path:
$$
\mathcal{N}^{r}_{\mu}:=\left\{w=(x,Y,z)\in \mathcal{W}_{++}\mid 
\|
\Phi^1_{\mu}(w)
\|
\le r
\right\}.
$$
The following proposition concerns an error bound on $\|\tw^{\ell}-w^{\ast}\|$ that will play a crucial role in the convergence analysis.
It follows immediately from \cite[Lemma~1]{yamashita2012local}.
\begin{proposition}\label{prop:0219-1}
Let $\{\tmu_{\ell}\}\subseteq \R_{++}$
and $\{r_{\ell}\}\subseteq \R_{++}$
 be a sequence such that $\lim_{\ell\to\infty}\tmu_{\ell}=0$ and $r_{\ell}=o(\tmu_{\ell})$.
Moreover, let $\{\tw^{\ell}\}\subseteq \mathcal{W}_{++}$ be a sequence satisfying $\tw^{\ell}\in \N^{r_{\ell}}_{\mu_{\ell}}$ for each $\ell$ and $\lim_{\ell\to\infty}\tw^{\ell}=w^{\ast}$.
Then,
$\|\tw^{\ell}-w^{\ast}\|=\Theta(\tmu_{\ell}).$
\end{proposition}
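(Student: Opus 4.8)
The plan is to derive the bound $\|\tw^{\ell}-w^{\ast}\|=\Theta(\tmu_\ell)$ by first invoking the quantitative error bound of \cite[Theorem~1]{yamashita2012local} to get the upper bound $\|\tw^\ell-w^\ast\| = O(\|\Phi^1_{\tmu_\ell}(\tw^\ell)\| + \tmu_\ell)$, and then establishing a matching lower bound $\|\tw^\ell-w^\ast\| = \Omega(\tmu_\ell)$ by comparing $\tw^\ell$ against the central-path point $v^\ast(\tmu_\ell)$, whose distance to $w^\ast$ is itself $\Theta(\tmu_\ell)$ by the strict-complementarity/implicit-function-theorem construction of $v^\ast$.

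First I would unwind what membership in $\N^{r_\ell}_{\tmu_\ell}$ buys us: $\tw^\ell\in\Wcal_{++}$ and $\|\Phi^1_{\tmu_\ell}(\tw^\ell)\|\le r_\ell = o(\tmu_\ell)$. Since \cite[Corollary~1]{yamashita2012local} gives nonsingularity of $\mathcal{J}\Phi^2_0$ at $w^\ast$, a standard Lipschitz/inverse-function argument (this is exactly the content of \cite[Theorem~1]{yamashita2012local}) yields, in a neighborhood of $w^\ast$, an estimate of the form
\begin{equation*}
\|\tw^\ell - w^\ast\| = O\!\left(\|\Phi^2_0(\tw^\ell)\|\right).
\end{equation*}
Then I would bound $\|\Phi^2_0(\tw^\ell)\|$ from above: using $\|\Phi^2_\mu(w)\|\le\|\Phi^1_\mu(w)\|$ from \eqref{eq:0218} and the triangle inequality in the middle block, $\|\Phi^2_0(\tw^\ell)\| \le \|\Phi^2_{\tmu_\ell}(\tw^\ell)\| + \sqrt{m}\,\tmu_\ell \le \|\Phi^1_{\tmu_\ell}(\tw^\ell)\| + \sqrt{m}\,\tmu_\ell \le r_\ell + \sqrt{m}\,\tmu_\ell = O(\tmu_\ell)$, using $r_\ell = o(\tmu_\ell)$. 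This settles $\|\tw^\ell - w^\ast\| = O(\tmu_\ell)$.

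For the lower bound, the key observation is that the central path point $v^\ast(\tmu_\ell)$ satisfies $\|v^\ast(\tmu_\ell) - w^\ast\| = \Theta(\tmu_\ell)$: the upper side follows from Lipschitz continuity of $v^\ast$ on $[0,\bar\mu]$ with $v^\ast(0)=w^\ast$, and the lower side follows because, under strict complementarity, the complementarity block forces e.g. a gap of order $\tmu_\ell$ between $G(x^\ast(\tmu_\ell))$ and $G(x^\ast)$ (or between $Y_\ast(\tmu_\ell)$ and $Y_\ast$) — concretely $G(x^\ast(\tmu_\ell))Y_\ast(\tmu_\ell)=\tmu_\ell I$ with $G(x^\ast)+Y_\ast\succ 0$ makes $v^\ast$ a genuine non-stationary curve, so $\dot v^\ast(0)\ne 0$. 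Now I apply the same error bound to $v^\ast(\tmu_\ell)$ as well — it lies in $\N^{o(\tmu_\ell)}_{\tmu_\ell}$ since $\Phi^2_{\tmu_\ell}(v^\ast(\tmu_\ell))=0$ forces $\|\Phi^1_{\tmu_\ell}(v^\ast(\tmu_\ell))\|$ small (the $\Phi^1$ vs.\ $\Phi^2$ discrepancy in the off-diagonal part is controlled because $G(x^\ast(\tmu_\ell))Y_\ast(\tmu_\ell)$ is already symmetric up to $o(\tmu_\ell)$ near the KKT point). Then
\begin{equation*}
\tmu_\ell = \Theta\!\left(\|v^\ast(\tmu_\ell) - w^\ast\|\right) \le \Theta\!\left(\|v^\ast(\tmu_\ell) - \tw^\ell\| + \|\tw^\ell - w^\ast\|\right),
\end{equation*}
and since $\|v^\ast(\tmu_\ell)-\tw^\ell\| = O(\|\Phi^2_0(v^\ast(\tmu_\ell))\| + \|\Phi^2_0(\tw^\ell)\|)$… more cleanly: both $v^\ast(\tmu_\ell)$ and $\tw^\ell$ lie in the same $O(\tmu_\ell)$-ball around $w^\ast$, and both are mapped by $\Phi^2_{\tmu_\ell}$ to points within $o(\tmu_\ell)$ of $0$; invertibility of $\mathcal{J}\Phi^2_{\tmu_\ell}$ near $w^\ast$ then gives $\|\tw^\ell - v^\ast(\tmu_\ell)\| = O(\|\Phi^2_{\tmu_\ell}(\tw^\ell) - \Phi^2_{\tmu_\ell}(v^\ast(\tmu_\ell))\|) = o(\tmu_\ell)$. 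Combining, $\|\tw^\ell - w^\ast\| \ge \|v^\ast(\tmu_\ell) - w^\ast\| - \|v^\ast(\tmu_\ell) - \tw^\ell\| = \Theta(\tmu_\ell) - o(\tmu_\ell) = \Omega(\tmu_\ell)$, which together with the earlier upper bound gives $\|\tw^\ell - w^\ast\| = \Theta(\tmu_\ell)$.

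The main obstacle I anticipate is making the lower bound rigorous: one must verify that $v^\ast$ is not tangent to the set of KKT points at $\mu=0$, i.e.\ that $\|v^\ast(\tmu_\ell)-w^\ast\|$ is genuinely $\Omega(\tmu_\ell)$ and not $o(\tmu_\ell)$. This is where strict complementarity is essential — it guarantees that on the range space of $Y_\ast$ the matrix $G(x^\ast)$ is strictly positive, so the equation $G(x^\ast(\mu))^{1/2}Y_\ast(\mu)G(x^\ast(\mu))^{1/2}=\mu I$ cannot be satisfied with $Y_\ast(\mu)$ staying within $o(\mu)$ of $Y_\ast$ on that subspace, forcing $\|Y_\ast(\mu)-Y_\ast\|=\Omega(\mu)$. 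The rest is bookkeeping with the two error-bound applications. All of this is, I expect, precisely why the authors say the proposition "follows immediately from \cite[Theorem~1]{yamashita2012local}" — Theorem~1 there presumably already packages the two-sided estimate, so the present proof is a short invocation rather than a re-derivation.
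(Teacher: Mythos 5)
Your proposal is correct in substance, and it matches the paper's (non-)proof: the paper simply cites \cite[Theorem~1]{yamashita2012local}, which already packages the two-sided estimate, and your upper bound via nonsingularity of $\mathcal{J}\Phi^2_0(w^{\ast})$ plus $\|\Phi^2_0(\tw^{\ell})\|\le\|\Phi^1_{\tmu_{\ell}}(\tw^{\ell})\|+\sqrt{m}\,\tmu_{\ell}$ is exactly the intended mechanism. Two remarks on your lower bound, which is more roundabout than it needs to be. First, the detour through the central path is avoidable: membership in $\N^{r_{\ell}}_{\tmu_{\ell}}$ gives $\|{\rm Sym}(G(\tx^{\ell})\tY_{\ell})-\tmu_{\ell}I\|_F\le r_{\ell}$, so the middle block of $\Phi^2_0(\tw^{\ell})$ has norm at least $\sqrt{m}\,\tmu_{\ell}-r_{\ell}$; since $\Phi^2_0$ is locally Lipschitz and $\Phi^2_0(w^{\ast})=0$, this yields $\|\tw^{\ell}-w^{\ast}\|\ge L^{-1}(\sqrt{m}\,\tmu_{\ell}-r_{\ell})=\Omega(\tmu_{\ell})$ directly, with no reference to $v^{\ast}(\tmu_{\ell})$ and no need for strict complementarity at this step. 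Second, your justification that $\|v^{\ast}(\mu)-w^{\ast}\|=\Omega(\mu)$ contains a misstatement: on the range space of $Y_{\ast}$ the matrix $G(x^{\ast})$ is \emph{zero}, not strictly positive (complementarity makes the two ranges orthogonal); the clean argument is that $G(x^{\ast}(\mu))\bullet Y_{\ast}(\mu)=m\mu$ while $G(x^{\ast})\bullet Y_{\ast}=0$ and $w\mapsto G(x)\bullet Y$ is locally Lipschitz. Neither issue invalidates your conclusion, since the final comparison step you actually use relies only on $\Phi^2_{\tmu_{\ell}}(v^{\ast}(\tmu_{\ell}))=0$ and $\|\Phi^2_{\tmu_{\ell}}(\tw^{\ell})\|\le r_{\ell}$.
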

\subsection{Main results}\label{subsec:mainr}
To make our goal clear, we first present our main results without proofs.
We shall make the following assumptions:
\begin{assumption}\label{assum:A}
In Algorithm\,\cref{alg4}, the function $\P:\mathcal{W}_{++}\to \R^{m\times m}$ satisfies the following conditions:
\begin{enumerate}
\item[{\rm (\bf P1)}] $\P(w)$ is nonsingular for any $w\in \mathcal{W}_{++}$.
\item[{\rm (\bf P2)}]
Let $w^{\ast}$ be the KKT triplet defined in the beginning of this Section\,\cref{sec:local} and let $\tau>0$.
There exists $\xi\in (0,1)$ such that, for arbitrary sequences
$\{\tw^{\ell}\}:=\{(\tx^{\ell},\tY_{\ell},\tz^{\ell})\}\subseteq \mathcal{W}_{++}$,
$\{\tmu_{\ell}\}\subseteq \R_{++}$, and $\{r_{\ell}\}\subseteq\R_{++}$
satisfying
\begin{equation}
\lim_{\ell\to\infty}(\tw^{\ell},\tmu_{\ell})=(w^{\ast},0),\ \tw^{\ell}\in \N_{\tmu_{\ell}}^{r_{\ell}},\ r_{\ell}=\tau\tmu_{\ell}^{1+\xi},
\label{eq:P2}
\end{equation}
the sequence 
\begin{align*}
\{\zli\}:=\left\{\tmu_{\ell}{\P}(\tw^{\ell})^{-1}\mathcal{L}^{-1}_{\hat{G}_{\ell}^{\fr}}(\widehat{\G}_i(\tx^{\ell}))\hat{G}_{\ell}^{-\fr}{\P}(\tw^{\ell})\right\}\subseteq \R^{m\times m}
\end{align*}
is bounded for each $i=1,2,\ldots,n$, where
$$
G_{\ell}:=G(\tx^{\ell}),\ \hat{G}_{\ell}:=\P(\tilde{w}^{\ell})G_{\ell}\P(\tilde{w}^{\ell})^{\top},\ \widehat{\G}_i(\tx^{\ell}):=\P(\tilde{w}^{\ell})\Gi(\tx^{\ell})\P(\tilde{w}^{\ell})^{\top}.$$
\end{enumerate}
\end{assumption}
One may ask how general the above conditions~({\rm \bf P1}) and ({\rm \bf P2}) are.
We answer this question in the following proposition.
We defer its proof to Appendix.  
In the proof, we often make use of Proposition\,\cref{prop:xyx}, which will be presented in Section\,\cref{subsec:prel}.
\begin{proposition}\label{prop:scaling}
If the function $\P$ attains at any $w\in \mathcal{W}_{++}$
either {\rm (i)} the identity matrix or {\rm (ii)} the scaling matrix for {\HKM} direction,
it satisfies conditions~{\rm({\bf P1})} and {\rm({\bf P2})} for any $\xi\in (0,1)$.
Moreover, it also does so for any $\xi\in [\frac{1}{2},1)$ when
it attains at any $w$ the scaling matrix corresponding to any one of {\rm (iii)} NT, {\rm (iv)}
{\HKM}-dual, and {\rm (v)} MTW directions. 
\end{proposition}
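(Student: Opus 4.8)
The plan is to verify conditions \textbf{(P1)} and \textbf{(P2)} for each of the five concrete scaling rules. Condition \textbf{(P1)} is immediate in every case: the identity is trivially nonsingular, and for the {\HKM}, NT, {\HKM}-dual, and MTW choices the scaling matrix is a power of $G(x)$, $Y$, or a product thereof, all of which are positive definite (hence invertible) on $\mathcal{W}_{++}$. So the substance is entirely in \textbf{(P2)}, namely showing that the matrix sequence
\begin{equation*}
\zli=\tmu_{\ell}\,\P(\tw^{\ell})^{-1}\mathcal{L}^{-1}_{\hat{G}_{\ell}^{\fr}}\!\bigl(\widehat{\G}_i(\tx^{\ell})\bigr)\hat{G}_{\ell}^{-\fr}\,\P(\tw^{\ell})
\end{equation*}
stays bounded under the hypotheses \eqref{eq:P2}. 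The key observation is that since $\tw^{\ell}\to w^{\ast}$ with $\tw^{\ell}\in\N^{r_{\ell}}_{\tmu_{\ell}}$ and $r_{\ell}=o(\tmu_{\ell})$, \cref{prop:0219-1} gives $\|\tw^{\ell}-w^{\ast}\|=\Theta(\tmu_{\ell})$, and therefore, using the structure of a KKT point satisfying strict complementarity and nondegeneracy, the eigenvalues of $G_{\ell}$ (resp.\ $\tY_{\ell}$) split into $r_{\ast}$ eigenvalues bounded away from zero and $m-r_{\ast}$ eigenvalues of order $\Theta(\tmu_{\ell})$ (resp.\ the reverse). This spectral splitting is the engine of the whole argument.

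The concrete steps: First I would record, via \cref{prop:xyx} (the eigenvalue/block-structure lemma referenced in the proof), the asymptotic behavior of $G_{\ell}$ and $\tY_{\ell}$ and of the scaled quantities $\hat{G}_{\ell}$, $\hat{Y}_{\ell}$ — in particular that for each listed scaling the product $\hat{G}_{\ell}\hat{Y}_{\ell}$ commutes and equals $G_{\ell}^{\fr}\tY_{\ell}G_{\ell}^{\fr}$ up to similarity, whose eigenvalues are $\Theta(\tmu_{\ell})$ by the $\N^{r_{\ell}}_{\tmu_{\ell}}$ membership. Second, I would unwind the definition of $\mathcal{L}^{-1}_{\hat{G}_{\ell}^{\fr}}$: in an eigenbasis of $\hat{G}_{\ell}^{\fr}$ with eigenvalues $\sigma_1,\dots,\sigma_m>0$, the $(p,q)$ entry of $\mathcal{L}^{-1}_{\hat{G}_{\ell}^{\fr}}(M)$ is $M_{pq}/(\sigma_p+\sigma_q)$. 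So $\zli$ is a Hadamard-type product of $\widehat{\G}_i(\tx^{\ell})$ (transported to that eigenbasis) with the matrix $\bigl(\tmu_{\ell}/((\sigma_p+\sigma_q)\sigma_q)\bigr)_{p,q}$, further conjugated back by $\P(\tw^{\ell})^{-1}(\cdot)\P(\tw^{\ell})$. Third, case by case I would estimate the sizes $\sigma_p$: for the identity scaling $\sigma_p$ are just the eigenvalues of $G_{\ell}^{\fr}$, which are $\Theta(1)$ or $\Theta(\tmu_{\ell}^{\fr})$; for {\HKM}, $\hat{G}_{\ell}=I$ so $\sigma_p\equiv1$ and the $\tmu_{\ell}$ factor alone kills everything, giving boundedness for \emph{any} $\xi$; for NT, {\HKM}-dual and MTW the $\sigma_p$ are $\Theta(1)$ or $\Theta(\tmu_{\ell}^{\fr})$ as well (e.g.\ for NT, $\hat{G}_{\ell}=Y_{\ell}^{\fr}G_{\ell}Y_{\ell}^{\fr}$ has eigenvalues $\Theta(\tmu_{\ell})$, so $\sigma_p=\Theta(\tmu_{\ell}^{\fr})$ uniformly). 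The worst term in $\tmu_{\ell}/((\sigma_p+\sigma_q)\sigma_q)$ is then $\tmu_{\ell}/(\tmu_{\ell}^{\fr}\cdot\tmu_{\ell}^{\fr})=1$ for the identity, but for NT/{\HKM}-dual/MTW the smallest $\sigma$ can be $\Theta(\tmu_{\ell}^{\fr})$ while $\widehat{\G}_i$ itself may carry a compensating factor only of order $\tmu_{\ell}^{0}$ — forcing $\tmu_{\ell}^{1+\xi}$-level perturbation control (hence the restriction $\xi\ge\tfrac12$). Fourth, I would assemble these bounds together with $r_{\ell}=\tau\tmu_{\ell}^{1+\xi}$ to conclude $\|\zli\|=O(1)$.

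The main obstacle is the third step: carefully tracking the interplay between (a) the off-diagonal blocks of $\widehat{\G}_i(\tx^{\ell})$ transported into the $\hat{G}_{\ell}^{\fr}$-eigenbasis, whose sizes depend on how the scaling mixes the ``large'' and ``small'' eigenspaces of $G_{\ell}$, and (b) the denominators $(\sigma_p+\sigma_q)\sigma_q$, and showing the product never blows up faster than the $\tmu_{\ell}$ prefactor and the $\tmu_{\ell}^{1+\xi}$ error term can absorb. This is where the strict complementarity and nondegeneracy conditions must be used in full, via \cref{prop:xyx}, to guarantee that the ``dangerous'' small$\times$small block of $\widehat{\G}_i$ is itself $O(\tmu_{\ell})$ (it corresponds to the directions $v_{ij}$, which are controlled by nondegeneracy), so that after dividing by $\Theta(\tmu_{\ell})$ from the denominator the result remains $O(1)$. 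For the identity and {\HKM} cases this is clean and needs no extra power of $\tmu_{\ell}$; for NT, {\HKM}-dual, MTW the scaling redistributes the blocks less favorably and one genuinely needs the extra $\tmu_{\ell}^{\xi}$ slack with $\xi\ge\tfrac12$, which is exactly what the proposition asserts.
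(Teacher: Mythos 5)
Your overall strategy coincides with the paper's: diagonalize $\hGl$, use the entrywise formula $(\mathcal{L}^{-1}_{\hGl^{\fr}}(M))_{pq}=M_{pq}/(\sigma_p+\sigma_q)$, and close each case by tracking the scale of the eigenvalues $\sigma_p$ against the perturbation budget $r_\ell=\tau\tmu_\ell^{1+\xi}$. (A cosmetic point first: the block-structure result you invoke is \cref{appendix:prop:0219-1}, not \cref{prop:xyx}, which is the inequality $\|X^{\fr}YX^{\fr}-\mu I\|_F\le\|XY-\mu I\|_F$.) However, two of your specific claims are wrong and one essential ingredient is missing.

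First, the assertion that nondegeneracy forces the ``small$\times$small'' block of $\widehat{\G}_i(\tx^{\ell})$ to be $O(\tmu_\ell)$ is false: nondegeneracy is a linear-independence condition on the vectors $v_{pq}=(e_p^{\top}\G_k(x^{\ast})e_q)_k$ and implies no smallness at all. Fortunately it is also unnecessary, as your own worst-case computation for the identity shows: the denominator $(\sigma_p+\sigma_q)\sigma_q$ is at least of order $\tmu_\ell$ in every block, so the prefactor $\tmu_\ell$ already yields $O(1)$ with only $\|\G_i(\tx^{\ell})\|_F=O(1)$. The paper's Case (i) avoids the block analysis entirely via the Lyapunov bound $\|\mathcal{L}^{-1}_{X^{\fr}}(\Delta X)X^{-\fr}\|_F\le\sqrt{m/2}\,\|\Delta X\|_F\|X^{-1}\|_F$ combined with $\tmu_\ell\|\Gl^{-1}\|_F=O(1)$.

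Second, for cases (iv) and (v) your eigenvalue scales are incorrect, and, more importantly, you never account for the outer conjugation by $\P(\tw^{\ell})^{-1}(\cdot)\P(\tw^{\ell})$, which is precisely where the threshold $\xi\ge\fr$ comes from. For the listed scalings all eigenvalues of $\hGl$ cluster at a single scale $\tmu_\ell^{\rast}$ with $\rast=1$ (NT), $\rast=2$ (HKM-dual, since $\hGl^{\fr}=\hYl^{-1}\approx\tmu_\ell I$), and $\rast=\fr$ (MTW, since $\hGl^{2}\approx\tmu_\ell I$) --- not $\Theta(\tmu_\ell^{\fr})$ uniformly as you state. The paper approximates $\mathcal{L}^{-1}_{\hGl^{\fr}}(\widehat{\G}_i(\tx^{\ell}))$ by $\tfrac12\tmu_\ell^{-\rast/2}\widehat{\G}_i(\tx^{\ell})$ and arrives at $\|\zli\|_F=O(\tmu_\ell^{1-\rast}\|\tPl\|_F^2+\tmu_\ell^{\xi+1-\rast}\|\tPl\|_F^3\|\tPl^{-1}\|_F)$; in each of (iii)--(v) the second term evaluates to $O(\tmu_\ell^{\xi-\fr})$, which is exactly what requires $\xi\ge\fr$. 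Without estimating $\|\tPl\|_F$ and $\|\tPl^{-1}\|_F$ case by case (e.g.\ $\|\tPl^{-1}\|_F=O(\tmu_\ell^{-\fr})$ for NT, $\|\tPl\|_F=O(\tmu_\ell^{-\frac14})=\|\tPl^{-1}\|_F$ for MTW), the Hadamard-quotient heuristic alone cannot produce the exponent $\fr$, so the case analysis as sketched does not close.
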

The following theorem states the superlinear convergence of Algorithm\,\cref{alg4} started from a point that stays in a small neighborhood of the central path and is furthermore sufficiently close to the KKT triplet $w^{\ast}$.
The proof will be given in Section\,\cref{sec:last} after preparing some preliminary results in the subsequent section.
In the theorem, 
$\alpha$ is the constant chosen in the initial setting of Algorithm\,\cref{alg4}. 
In addition, let $\xi\in (0,1)$ and $\tau>0$ be the constants described in condition~{({\bf P2})} and also $\xi^{\prime}\in (0,1)$ be a constant satisfying
\begin{align}
\frac{\xi-\alpha}{1+\alpha}>\xi^{\prime}>\frac{\xi}{2},\ 0<\alpha<\frac{\xi}{\xi+2}.
\label{al:cond}
\end{align}
For example, the above conditions are fulfilled by $(\xi,\xi^{\prime},
\alpha)=(1/2,1/3,1/10)$.
\begin{theorem}\label{thm:main}
Suppose that Assumption\,\cref{assum:A} holds and $w^0$ is sufficiently close to $w^{\ast}$ and satisfies $w^0\in \N^{\tau\mu_0^{1+\xi}}_{\mu_0}$ with $\mu_0 = \gamma \|\Xi_0^I(w^0)\|$ for fixed $\gamma>0$. 
Then,
\begin{enumerate}
\item\label{mThm:1}
it holds that, for each $k\ge 0$, the linear equation~\eqref{eq:pre3} with
$(w,\mu,P)=(w^{k},\mu_k,P_k)$
and equation\,\eqref{eq:corr4} with
$(w,\mu,P)=(w^{k+\fr},\mu_{k+1},P_{k+\fr})$ are uniquely solvable and 
\begin{subequations}
\begin{align}
&\mu_{k+1}=\mu_k^{1+\alpha}<\mu_k,&\label{eq:0704-1721}\\
&s_k^{\rm t}=1-\mu_k^{\alpha},\ w^{k+\fr}=w^k+(1-\mu_k^{\alpha})\Deltap w^k\in
\N_{\mu_{k+1}}^{\tau\mu_{k+1}^{1+\xi^{\prime}}},\label{eq:0702-2329}\\
&s_{\kfr}^{\rm c}=1,\ w^{k+1} = w^{k+\fr}+\Deltac w^{\kfr}\in
\N_{\mu_{k+1}}^{\tau\mu_{k+1}^{1+{\xi}}}.\label{eq:0704-1726}
\end{align}
\end{subequations}
\item\label{mThm:2}Furthermore, the generated sequence $\{w^k\}$ converges to $w^{\ast}$ superlinearly at the order of $1+\alpha\in (1,4/3)$, namely,
$$
\|w^{k+1}-w^{\ast}\|=\rO(\|w^k-w^{\ast}\|^{1+\alpha}).
$$
\end{enumerate}
\end{theorem}
\subsection{Preliminary results for Theorem\,\cref{thm:main}}\label{subsec:prel}
In this section, we consider arbitrary sequences 
$\{\tw^{\ell}\}\subseteq \Wcal_{++}$, $\{\tmu_{\ell}\}\subseteq \R_{++}$, and $\{r_{\ell}\}\subseteq\R_{++}$
satisfying \eqref{eq:P2} in condition~({\bf \rm P2}).
In addition, we define a sequence of nonsingular matrices $\{\tP_{\ell}\}\subseteq \R^{m\times m}$ by $\tP_{\ell}:=\P(\tw^{\ell})$ for each $\ell\ge 0$, where $\P:\mathcal{W}_{++}\to \R^{m\times m}$ is a matrix-valued function satisfying conditions~({\bf P1}) and ({\bf P2}).
We will discuss properties of these sequences.
Note that the above sequences are 
not necessarily ones generated by Algorithm\,\cref{alg4}.
They are just introduced for the sake of analysis of the algorithm.

\subsubsection{Uniqueness and error bounds of the solutions of the linear equations~\eqref{eq:pre3} and \eqref{eq:corr4}}\label{sec:limit}
In this section, we aim to show the following proposition concerning 
unique solvability of the linear equations~\eqref{eq:pre3} and \eqref{eq:corr4} near $w^{\ast}$
and error bounds of their solutions.
\begin{proposition}\label{prop:0613-2}
For any $\ell\ge \ell_0$ with $\ell_0$ sufficiently large, the linear equations~\eqref{eq:pre3} and \eqref{eq:corr4}
with $(P,w,\mu)=(\tP_{\ell},\tw^{\ell},\tmu_{\ell})$ have unique solutions, say
$\Deltap \tw^{\ell}$ and $\Deltac \tw^{\ell}$, respectively.
In particular, we have
\begin{enumerate}
\item\label{e1} $\|\Deltap \tw^{\ell}\|=\rO(\tmu_{\ell})$, and
\item\label{e2} $\|\Deltac \tw^{\ell}\|=\rO(\tmu_{\ell}^{1+\xi})$.
\end{enumerate}
\end{proposition}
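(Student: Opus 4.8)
The plan is to analyze the linear systems \eqref{eq:pre3} and \eqref{eq:corr4} written in the detailed form \eqref{al:scalednewton-1}--\eqref{al:scalednewton-4}, treating $\Deltap\tw^\ell$ and $\Deltac\tw^\ell$ as the unknowns and showing that the coefficient operator converges to an invertible limiting operator so that unique solvability and the error bounds both follow. The first obstacle, as flagged in \cref{rem:1}(i), is that $\Xi^P_\mu$ and its Jacobian are \emph{not} defined at $w^\ast$ because $G(x^\ast)^{1/2}$ is singular; so we cannot simply pass to the limit in $\mathcal{J}\Xi^P_{\tmu_\ell}(\tw^\ell)$ directly. I would circumvent this by \textbf{eliminating $\Delta Y$} from the middle block \eqref{al:scalednewton-2}: since $\hG_\ell^{1/2}$ is (for each finite $\ell$) invertible, \eqref{al:scalednewton-2} lets us solve for $P^{-\top}\Delta Y P^{-1}$ in terms of $\hU$ (hence in terms of $\Delta x$ via \eqref{al:1214-1}) and the right-hand side. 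Substituting into \eqref{al:scalednewton-1} reduces everything to a linear system in $(\Delta x,\Delta z)\in\R^n\times\R^s$ whose matrix is $n{+}s$ dimensional. The key quantities that appear are exactly the products $\tmu_\ell\,\tP_\ell^{-1}\mathcal{L}^{-1}_{\hG_\ell^{1/2}}(\hGi(\tx^\ell))\hG_\ell^{-1/2}\tP_\ell$ — which is why \textbf{(P2)} postulates precisely the boundedness of $\{\zli\}$. I would show, using \cref{prop:0219-1} (so $\|\tw^\ell-w^\ast\|=\Theta(\tmu_\ell)$, since $r_\ell=\tau\tmu_\ell^{1+\xi}=o(\tmu_\ell)$) together with (P2) and \cref{prop:xyx} (cited as forthcoming in \cref{subsec:prel}), that the reduced coefficient matrix converges to the Jacobian of $\Phi^2_0$ at $w^\ast$ — or to a matrix congruent/equivalent to it — which is nonsingular under the three regularity conditions by \cite[Corollary~1]{yamashita2012local}. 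By continuity of matrix inversion, the reduced matrices are invertible with uniformly bounded inverses for all large $\ell$, giving unique solvability of \eqref{eq:pre3} and \eqref{eq:corr4}.

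For the error bounds, once the reduced system has a uniformly bounded inverse, it suffices to bound the (reduced) right-hand sides. For the tangential equation \eqref{eq:pre3} the right-hand side is $[0,-\tmu_\ell I,0]^\top$; after the elimination above, the reduced right-hand side is $O(\tmu_\ell)$ — here the $\zli$-boundedness from (P2) is again what keeps the transformed $-\tmu_\ell I$ term bounded by a constant times $\tmu_\ell$ — so $\|\Deltap\tw^\ell\|=O(\tmu_\ell)$, which is item~\ref{e1}. For the centering equation \eqref{eq:corr4} the right-hand side is $-\Xi^P_{\tmu_\ell}(\tw^\ell)$; its first and third blocks are $\nabla_x L(\tw^\ell)$ and $h(\tx^\ell)$, and its middle block is $\hG_\ell^{1/2}\hY_\ell\hG_\ell^{1/2}-\tmu_\ell I = G(\tx^\ell)^{1/2}Y_\ell G(\tx^\ell)^{1/2}-\tmu_\ell I$ by the scaling invariance noted at the end of \cref{sec:MT}. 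The hypothesis $\tw^\ell\in\N^{r_\ell}_{\tmu_\ell}$ means $\|\Phi^1_{\tmu_\ell}(\tw^\ell)\|\le r_\ell=\tau\tmu_\ell^{1+\xi}$, i.e. $\|\nabla_xL(\tw^\ell)\|$, $\|G(\tx^\ell)Y_\ell-\tmu_\ell I\|_F$, $\|h(\tx^\ell)\|$ are all $O(\tmu_\ell^{1+\xi})$; using \cref{prop:xyx} to convert $\|G(\tx^\ell)Y_\ell-\tmu_\ell I\|_F=O(\tmu_\ell^{1+\xi})$ into $\|G(\tx^\ell)^{1/2}Y_\ell G(\tx^\ell)^{1/2}-\tmu_\ell I\|_F=O(\tmu_\ell^{1+\xi})$, we get that the whole reduced right-hand side is $O(\tmu_\ell^{1+\xi})$, hence $\|\Deltac\tw^\ell\|=O(\tmu_\ell^{1+\xi})$, which is item~\ref{e2}. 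Recovering $\Delta Y$ from the eliminated block preserves these orders because the back-substitution only involves the uniformly bounded quantities $\zli$ and $\hG_\ell^{\pm1/2}$-conjugations that stay controlled near $w^\ast$ thanks to \cref{prop:xyx}.

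I expect the \textbf{main obstacle} to be the convergence of the reduced coefficient matrix: one must show that the terms $\tP_\ell^{-\top}\bigl(\text{stuff involving }\hG_\ell^{1/2},\hY_\ell\bigr)\tP_\ell$ arising from eliminating $\Delta Y$ actually converge to the corresponding blocks of $\mathcal{J}\Phi^2_0(w^\ast)$, despite $\hG_\ell^{1/2}$ degenerating. This is where the interplay of (P2) (boundedness of $\zli$), the strict complementarity condition (which controls how $G(\tx^\ell)$ and $Y_\ell$ split along the range/kernel decomposition at $w^\ast$), and \cref{prop:xyx} (comparing $\|GY-\mu I\|$ with $\|G^{1/2}YG^{1/2}-\mu I\|$ and related symmetrizations) must be combined carefully; the scaling-invariance identity for $\|G(x)^{1/2}YG(x)^{1/2}-\mu I\|_F$ is the technical device that lets us state everything in the \emph{unscaled} variables and thereby identify the limit with $\mathcal{J}\Phi^2_0(w^\ast)$. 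Everything else is routine linear algebra and Taylor estimates.
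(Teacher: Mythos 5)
Your overall strategy---replace the degenerate scaled system by an equivalent one whose coefficient operator converges to the nonsingular Jacobian of $\Phi^2_0$ at $w^{\ast}$, then read off unique solvability together with the bounds $O(\tmu_{\ell})$ and $O(\tmu_{\ell}^{1+\xi})$ from the right-hand sides---matches the paper, and your treatment of the right-hand sides (via $\tw^{\ell}\in\N^{r_{\ell}}_{\tmu_{\ell}}$ and \cref{prop:xyx}) is exactly what the paper does. However, the step you use to obtain a convergent coefficient operator, namely eliminating $\Delta Y$ from \eqref{al:scalednewton-2} and passing to a reduced system in $(\Delta x,\Delta z)\in\R^n\times\R^{\dimh}$, contains a genuine gap. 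Solving \eqref{al:scalednewton-2} for $\Delta Y$ requires conjugating by $\hG_{\ell}^{-\fr}$, and $\|\hG_{\ell}^{-\fr}\|_F$ is \emph{not} controlled near $w^{\ast}$: it is $\Theta(\tmu_{\ell}^{-\rast/2})$ for the scalings treated in the appendix (e.g.\ $\Theta(\tmu_{\ell}^{-\fr})$ for $P=I$ and for NT), precisely because $G(x^{\ast})$ is singular. So your assertion that the back-substitution ``only involves \ldots $\hG_{\ell}^{\pm\fr}$-conjugations that stay controlled near $w^{\ast}$'' is false, and recovering $\|\Deltap\tY_{\ell}\|=O(\tmu_{\ell})$ and $\|\Deltac\tY_{\ell}\|=O(\tmu_{\ell}^{1+\xi})$ from the reduced solution is as hard as the original problem. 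Likewise, the limit you propose for the reduced matrix cannot be $\mathcal{J}\Phi^2_0(w^{\ast})$ (wrong dimension); it would have to be a Schur complement of $\mathcal{J}\Phi^2_0(w^{\ast})$ with respect to its $\Delta Y$-block, but that block, $\Delta Y\mapsto{\rm Sym}(G(x^{\ast})\Delta Y)$, is itself singular, so the limiting Schur complement does not exist in any naive sense. You correctly flag the convergence of the reduced matrix as ``the main obstacle,'' but you do not overcome it, and the two supporting claims above suggest the elimination route would require a substantially harder spectral analysis than ``routine linear algebra and Taylor estimates.''

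The paper avoids elimination entirely. \cref{lem:0612} shows that the full scaled system is \emph{equivalent}---same unknowns $(\Delta x,\Delta Y,\Delta z)$, same solution set---to $T_{w,P}(\Delta w)=\eta$, whose middle block is ${\rm Sym}\bigl(G(x)\Delta Y+\sum_{i}\Delta x_i\,\mathcal{S}^i_P(x,Y)\bigr)$; the equivalence is pure algebra (conjugation by $P^{-1}\hG(x)^{\fr}$ and its inverse, plus unique solvability of the Lyapunov equation $\mathcal{L}_{G(x)}X=B$ for $G(x)\in S^m_{++}$). \cref{prop:20190605-1} then shows $\mathcal{S}^i_{\tP_{\ell}}(\tx^{\ell},\tY_{\ell})\to\Gi(x^{\ast})Y_{\ast}$---this is exactly where the boundedness of $\{\zli\}$ from ({\bf P2}) and $\|G_{\ell}\tY_{\ell}-\tmu_{\ell}I\|_F\le\tau\tmu_{\ell}^{1+\xi}$ are used---so that $T_{\tw^{\ell},\tP_{\ell}}\to\mathcal{T}_{\ast}=\mathcal{J}\Phi^2_0(w^{\ast})$, which is one-to-one and onto on all of $\mathcal{W}$ by \cite[Theorem~1]{yamashita2012local}. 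Unique solvability and $\|\Delta\tw^{\ell}\|=\Theta(\|\eta_{\ell}\|)$ follow at once, with no Schur complement and no unbounded conjugations. To repair your proposal you would either need to carry out the delicate eigen-decomposition analysis of the reduced system near the boundary, or switch to an equivalence of the paper's type.
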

Error bounds and uniqueness of solutions of the Newton equations play key roles in the local convergence analysis of the PDIPMs using the MZ family\cite{yamashita2012local,yamakawa2,okuno2018primal}, in which the proofs are based on the nonsingularity of the Jacobian of the function $\Phi_{0}^2$. 
However, as explained in Remark\,\cref{rem1}, the Jacobian of 
$\Xi_0^P$ is unavailable at the KKT triplet $w^{\ast}$ in general and thus a different approach is necessary for proving Proposition\,\cref{prop:0613-2}.

To prove Proposition\,\cref{prop:0613-2}, we define the following linear mapping $T_{w,P}:\mathcal{W}\to \mathcal{W}$
for a triplet $w=(x,Y,z)\in \mathcal{W}$ and a nonsingular matrix $P\in \R^{m\times m}$: 
$$
T_{w,P}(\Delta w):=
\begin{bmatrix}
&\nabla^2_{xx}L(w)\Delta x- \mathcal{J}{G}(x)^{\ast}\Delta Y+\nabla h(x)\Delta z\\
&
{\rm Sym}\left(G(x)\Delta Y+
\sum_{i=1}^n\Delta x_i
\mathcal{S}_{P}^i(x,Y)
\right)\\
&\nabla h(x)^{\top}\Delta x
\end{bmatrix},
$$
where $\Delta w=(\Delta x,\Delta Y,\Delta z)\in \mathcal{W}$ and  
\begin{align}
\mathcal{S}_{P}^i(x,Y)&:=
P^{-1}\left(\hG(x)^{\fr}\hU_i\hat{Y}
+\hG(x)\hat{Y}\hU_i\hG(x)^{-\fr}\right)P,\notag \\
\hU_i&:=\mathcal{L}_{\hG(x)^{\fr}}^{-1}(\hGi(x))\label{al:ui2}
\end{align}
for $i=1,2,\ldots,n$.
We then consider the following linear equations:
\begin{subequations}
\begin{align}
T_{w,P}(\Deltap w) &= \begin{bmatrix}
0\\
-\mu I\\
0
\end{bmatrix}\label{al:0608-1},\\
T_{w,P}(\Deltac w) &= \begin{bmatrix}
-\nabla_xL(w)\\
-{\rm Sym}{\left(G(x)Y\right)}+{\mu} I\\
-h(x)
\end{bmatrix}. \label{al:0608-2}
\end{align}
\end{subequations}
\begin{lemma}\label{lem:0612}
Solving equation\,\eqref{eq:pre3} is equivalent to solving equation\,\eqref{al:0608-1}, and the same holds for solving equations\,\eqref{eq:corr4} and \eqref{al:0608-2}.
\end{lemma}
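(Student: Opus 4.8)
The plan is to show that \eqref{eq:pre3} and \eqref{al:0608-1} have exactly the same solution set, and likewise \eqref{eq:corr4} and \eqref{al:0608-2}. Reading off $\mathcal{J}\Xi_{\mu}^P(w)\Delta w$ from \eqref{al:scalednewton-1}--\eqref{al:scalednewton-4}, its first and third block rows are literally the same linear maps of $\Delta w$ as the first and third block rows of $T_{w,P}(\Delta w)$, and the corresponding first and third entries of the two right-hand sides agree by inspection; hence everything comes down to the middle matrix equation. To handle it, write $\hU=\sum_{i=1}^{n}\Delta x_i\hU_i$ with $\hU_i=\mathcal{L}_{\hG(x)^{\fr}}^{-1}(\hGi(x))$ as in \eqref{al:1214-1} and \eqref{al:ui2}, denote by
$$
E:=\hG(x)^{\fr}\bigl(P^{-\top}\Delta Y\,P^{-1}\bigr)\hG(x)^{\fr}+\hU\hY\hG(x)^{\fr}+\hG(x)^{\fr}\hY\hU
$$
the left-hand side of \eqref{al:scalednewton-2} (so $E\in S^m$, since $\hU,\Delta Y,\hY\in S^m$), and put $B:=P^{-1}\hG(x)^{\fr}$; here it is implicit, from the occurrence of $\hG(x)^{-\fr}$, that $G(x)\in S^m_{++}$, so that $B$ is nonsingular, with $B^{-1}=\hG(x)^{-\fr}P$ and $BB^{\top}=P^{-1}\hG(x)P^{-\top}=G(x)$.

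The first step is the algebraic identity stating that the middle entry of $T_{w,P}(\Delta w)$ equals ${\rm Sym}(BEB^{-1})$; concretely,
$$
G(x)\Delta Y+\sum_{i=1}^{n}\Delta x_i\,\mathcal{S}_P^i(x,Y)={\rm Sym}\bigl(P^{-1}\hG(x)^{\fr}\,E\,\hG(x)^{-\fr}P\bigr).
$$
I would prove this by factoring $\hG(x)^{\fr}$ out on the left and $\hG(x)^{-\fr}$ out on the right of $E$, which gives
$$
\hG(x)^{\fr}\,E\,\hG(x)^{-\fr}=\hG(x)\bigl(P^{-\top}\Delta Y\,P^{-1}\bigr)+\sum_{i=1}^{n}\Delta x_i\bigl(\hG(x)^{\fr}\hU_i\hY+\hG(x)\hY\hU_i\hG(x)^{-\fr}\bigr),
$$
and then conjugating by $P$, using $P^{-1}\hG(x)\bigl(P^{-\top}\Delta Y\,P^{-1}\bigr)P=G(x)\Delta Y$ and the definition of $\mathcal{S}_P^i$ in \eqref{al:ui2}.

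The second step is that the linear map $\Psi\colon S^m\to S^m$ defined by $\Psi(X):={\rm Sym}(BXB^{-1})$ is a bijection. For injectivity, suppose $\Psi(X)=0$ with $X=X^{\top}$; then $BXB^{-1}+B^{-\top}XB^{\top}=0$, and setting $F:=BXB^{-1}$ and substituting $X=B^{-1}FB$ yields $F+(BB^{\top})^{-1}F(BB^{\top})=0$, that is, $G(x)F+FG(x)=0$; since $G(x)\in S^m_{++}$, the Lyapunov operator $\mathcal{L}_{G(x)}$ is nonsingular, so $F=0$ and hence $X=0$. Being a linear endomorphism of the finite-dimensional space $S^m$, $\Psi$ is therefore bijective. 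Moreover, one checks directly that $\Psi(-\mu I)=-\mu I$, and, using $P^{-1}\hG(x)\hY P=G(x)Y$, that $\Psi\bigl(\mu I-\hG(x)^{\fr}\hY\hG(x)^{\fr}\bigr)=\mu I-{\rm Sym}(G(x)Y)$.

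Putting the pieces together: $\Delta w$ solves \eqref{eq:pre3} if and only if its first and third blocks satisfy the corresponding relations and $E=-\mu I$; by the bijectivity of $\Psi$ and the identity of the first step, this is equivalent to those same first and third relations together with ${\rm Sym}(BEB^{-1})=\Psi(-\mu I)=-\mu I$, which is precisely \eqref{al:0608-1}. Running the same argument with the middle right-hand side $\mu I-\hG(x)^{\fr}\hY\hG(x)^{\fr}$, whose image under $\Psi$ is $\mu I-{\rm Sym}(G(x)Y)$, establishes the equivalence of \eqref{eq:corr4} and \eqref{al:0608-2}. I expect the only delicate point to be the bookkeeping in the first step, namely keeping the one-sided powers $\hG(x)^{\pm\fr}$ and the conjugation by $P$ in order so that the mildly nonsymmetric operators $\mathcal{S}_P^i$ reassemble correctly; the conceptual content is the short Lyapunov-operator argument in the second step, which is exactly where $G(x)\in S^m_{++}$ is used.
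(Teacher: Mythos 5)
Your proof is correct and follows essentially the same route as the paper's: reduce everything to the middle block equation, relate the scaled and unscaled forms by conjugating with $P^{-1}\hG(x)^{\fr}(\cdot)\hG(x)^{-\fr}P$ followed by symmetrization, and invoke the nonsingularity of the Lyapunov operator $\mathcal{L}_{G(x)}$ for $G(x)\in S^m_{++}$ to reverse the implication. The only difference is organizational: the paper proves the reverse direction by explicitly solving the Lyapunov equation $\mathcal{L}_{G(x)}(\Delta Y+\sum_i\Delta x_iQ_i+Y)=2\mu I$ for $\Delta Y$ and re-deriving the scaled equation, whereas you package the same fact as injectivity of the map $X\mapsto{\rm Sym}(BXB^{-1})$ on $S^m$; both rest on identical ingredients.
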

\begin{proof}
We show this lemma only for equations\,\eqref{eq:corr4} and \eqref{al:0608-2} because the equivalency of \eqref{eq:pre3} and \eqref{al:0608-1} can be shown in a similar manner.
To this end, it suffices to show that
the solution set of the second component equation of \eqref{eq:corr4} (see also the second line of \eqref{al:scalednewton-2})
is identical to that of \eqref{al:0608-2}, namely, we show 
\begin{align}
&{\rm Sym}\left(G(x)\Delta Y+
\sum_{i=1}^n\Delta x_i
\mathcal{S}_{P}^i(x,Y)
\right)=\mu I - {\rm Sym}{\left(G(x)Y\right)} \label{al:0608-3}\\
&\Longleftrightarrow
\hG(x)^{\fr}\Delta \hY \hG(x)^{\fr} + 
2{\rm Sym}\left(
\sum_{i=1}^n\Delta x_i\hU_i\hY\hG(x)^{\fr}
\right)
=\mu I-\hG(x)^{\fr}\hY\hG(x)^{\fr},
\label{eq:0422-2}
\end{align}
where $\Delta \hY:=P^{-\top}\Delta Y P^{-1}$.
We first show the direction $(\Leftarrow)$. 
Multiplying $P^{-1}\hG(x)^{\fr}$ and $\hG(x)^{-\fr}P$ from the left and right sides on both sides of \eqref{eq:0422-2}, respectively, we have 
\begin{align*}
P^{-1}\hG(x)\Delta \hY P+
\sum_{i=1}^n
\Delta x_i\mathcal{S}_P^i(x,Y)=\mu I - P^{-1}\hG(x)\hY P,\notag
\end{align*}
which can be rewritten as $G(x)\Delta Y + \sum_{i=1}^n\Delta x_i\mathcal{S}_{P}^i(x,Y)=\mu I - G(x)Y$
via \eqref{eq:scalegxy} and $\Delta \hY=P^{-\top}\Delta Y P^{-1}$.
Symmetrizing this equation readily implies equation\,\eqref{al:0608-3}.

We next show the converse direction $(\Rightarrow)$.
For each $i=1,2,\ldots,n$, 
let 
$$Q_i:=
2P^{\top}
{\rm Sym}\left(
\hat{G}(x)^{-\fr}\hU_i\hat{Y}
\right)P\in \mS^m.$$ 
By taking into account \eqref{eq:scalegxy} and the fact that $G(x)P^{\top}\hG(x)^{-\fr}=P^{-1}\hG(x)^{\fr}$, it follows that, 
$$
\mathcal{S}_{P}^i(x,Y)=
G(x)Q_i.
$$
Then, in terms of $\mathcal{L}_{G(x)}$, equation\,\eqref{al:0608-3} is equivalently transformed as
\begin{equation}
\mathcal{L}_{G(x)}\left(\Delta Y+\sum_{i=1}^n\Delta x_iQ_i+Y\right)=2\mu I.\label{eq:0608-4}
\end{equation}
Note the fact that given $A\in \mS^m_{++}$ and $B\in \mS^m$, the linear matrix equation $\mathcal{L}_AX=B$ has a unique solution in $X\in \mS^m$.
Then, equation \eqref{eq:0608-4} together with $G(x)\in \mS^m_{++}$ 
and $\mathcal{L}_{G(x)}^{-1}I=G(x)^{-1}/2$ implies
$\Delta Y = \mu G(x)^{-1}-\sum_{i=1}^n\Delta x_iQ_i-Y$. Multiplying $\hG(x)^{\fr}P^{-\top}$ and $P^{-1}\hG(x)^{\fr}$ from the left and right sides of both sides of this equation, respectively,
and recalling \eqref{eq:scalegxy} again yield the desired equation\,\eqref{eq:0422-2}.
This completes the proof.
\hfill$\Box$
\end{proof}
The above lemma indicates 
that, to examine the linear equations \eqref{eq:pre3} and \eqref{eq:corr4}, 
we may consider \eqref{al:0608-1} and \eqref{al:0608-2} alternatively.
This is beneficial because, as stated in the following proposition, the limit operator of the sequence $\{T_{\tw^{\ell},\tP_{\ell}}\}_{\ell\ge 0}$
coincides with the Jacobian of $\Phi^2_{0}$ at $w^{\ast}$ (recall \eqref{eq:defPhi}), which is actually a one-to-one and onto mapping. 
\begin{proposition}\label{prop:20190605-1}
It holds that {\rm (1)}
$$
\lim_{\ell\to \infty}T_{\tw^{\ell},\tP_{\ell}} = \mathcal{T}_{\ast},
$$
where $\mathcal{T}_{\ast}:\mathcal{W}\to \mathcal{W}$ is the linear mapping defined by 
$$
\mathcal{T}_{\ast}(\Delta w):=\begin{bmatrix}
\nabla^2_{xx} L(w^{\ast})\Delta x-\mathcal{J}G(x^{\ast})^{\ast}\Delta Y+\nabla h(x^{\ast})\Delta z \\
{\rm Sym}\left(G(x^{\ast})\Delta Y + \sum_{i=1}^n\Delta x_i\Gi(x^{\ast})Y_{\ast}\right)\\
\nabla h(x^{\ast})^{\top}\Delta x
\end{bmatrix}
$$
for any $\Delta w\in \mathcal{W}$.
{\rm (2)} Furthermore, $\mathcal{T}_{\ast}$ is a one-to-one and onto mapping. 
\end{proposition}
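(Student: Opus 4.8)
The plan is to establish part (i) by a direct limit computation, term by term in the block structure of $T_{\tw^{\ell},\tP_{\ell}}$, and then deduce part (ii) from the known nonsingularity of the Jacobian of $\Phi^2_0$ at $w^{\ast}$ (cited above from \cite[Corollary~1]{yamashita2012local}), after checking that $\mathcal{T}_{\ast}$ is exactly that Jacobian. For part (i), the first and third block rows are immediate: since $\tw^{\ell}=(\tx^{\ell},\tY_{\ell},\tz^{\ell})\to w^{\ast}$ and $f,G,h$ are three times continuously differentiable, we have $\nabla^2_{xx}L(\tw^{\ell})\to\nabla^2_{xx}L(w^{\ast})$, $\mathcal{J}G(\tx^{\ell})^{\ast}\to\mathcal{J}G(x^{\ast})^{\ast}$, and $\nabla h(\tx^{\ell})\to\nabla h(x^{\ast})$ by continuity, and these rows contain no scaling matrices. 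So the whole content is in the middle block: I must show
$$
\lim_{\ell\to\infty}\sum_{i=1}^n\Delta x_i\,\mathcal{S}_{\tP_{\ell}}^i(\tx^{\ell},\tY_{\ell})
=\sum_{i=1}^n\Delta x_i\,\Gi(x^{\ast})Y_{\ast}
$$
for each fixed $\Delta w$, equivalently $\mathcal{S}_{\tP_{\ell}}^i(\tx^{\ell},\tY_{\ell})\to\Gi(x^{\ast})Y_{\ast}$ for each $i$, where (recalling \eqref{al:ui2})
$\mathcal{S}_{P}^i(x,Y)=P^{-1}\bigl(\hG(x)^{\fr}\hU_i\hat{Y}+\hG(x)\hat{Y}\hU_i\hG(x)^{-\fr}\bigr)P$ with $\hU_i=\mathcal{L}_{\hG(x)^{\fr}}^{-1}(\hGi(x))$.

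The key algebraic step is to unscale $\mathcal{S}_{P}^i$. Using $\hG(x)=PG(x)P^{\top}$, $\hat Y=P^{-\top}YP^{-1}$, $\hGi(x)=P\Gi(x)P^{\top}$, and the identity $G(x)P^{\top}\hG(x)^{-\fr}=P^{-1}\hG(x)^{\fr}$ already invoked in the proof of \cref{lem:0612}, one computes
$$
P^{-1}\hG(x)\hat Y\hU_i\hG(x)^{-\fr}P
= G(x)\,Y\,\bigl(P^{-1}\hG(x)^{-\fr}\hU_i\hG(x)^{\fr}P\bigr)^{\!?}
$$
— more carefully, I would write $\mathcal S_P^i(x,Y)=G(x)Q_i$ with $Q_i=2P^{\top}\mathrm{Sym}\bigl(\hG(x)^{-\fr}\hU_i\hat Y\bigr)P$ exactly as in \cref{lem:0612}, and then observe that $\mathrm{Sym}(\mathcal S_P^i(x,Y))=\mathrm{Sym}(G(x)Q_i)$, while unscaling the first term gives $P^{-1}\hG(x)^{\fr}\hU_i\hat Y P = \bigl(P^{-1}\hG(x)^{\fr}\hU_i\hG(x)^{-\fr}P\bigr)\cdot G(x)^{?}$. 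The cleanest route is: show directly that $\mathcal S_P^i(x,Y)=P^{-1}\hG(x)^{\fr}\hat U_i\hG(x)^{-\fr}P\cdot(P^{-1}\hG(x)\hat Y P)$ ... Rather than fuss here, the point I will make precise is that $\mathcal S_P^i(x,Y)$ depends on $P$ only through the combination that appears in the bounded sequence $\{\zli\}$ of condition~({\bf P2}): indeed
$\tmu_{\ell}^{-1}\zli=\tP_{\ell}^{-1}\mathcal L^{-1}_{\hG_{\ell}^{\fr}}(\hGi(\tx^{\ell}))\hG_{\ell}^{-\fr}\tP_{\ell}=\tP_{\ell}^{-1}\hU_i\hG_{\ell}^{-\fr}\tP_{\ell}$, and multiplying on the right by $\tP_{\ell}^{-1}\hG_{\ell}\hat Y_{\ell}\tP_{\ell}=G(\tx^{\ell})Y_{\ell}$ and on the left by $\tP_{\ell}^{-1}\hG_{\ell}^{\fr}\tP_{\ell}$... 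This shows $\mathcal S^i_{\tP_{\ell}}$ is a fixed polynomial expression in the bounded matrices $\tmu_{\ell}^{-1}\zli$, $G(\tx^{\ell})$, $Y_{\ell}$ and their known limits. So far this only gives boundedness, not convergence.

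To get convergence I would use condition~({\bf P2}) together with the central-path error bound. Since $\tw^{\ell}\in\N^{r_{\ell}}_{\tmu_{\ell}}$ with $r_{\ell}=\tau\tmu_{\ell}^{1+\xi}=o(\tmu_{\ell})$, \cref{prop:0219-1} gives $\|\tw^{\ell}-w^{\ast}\|=\Theta(\tmu_{\ell})$; moreover $\|G(\tx^{\ell})Y_{\ell}-\tmu_{\ell}I\|\le\|\Phi^1_{\tmu_{\ell}}(\tw^{\ell})\|\le r_{\ell}=o(\tmu_{\ell})$, so $G(\tx^{\ell})Y_{\ell}=\tmu_{\ell}(I+o(1))$, hence $\tmu_{\ell}^{-1}G(\tx^{\ell})Y_{\ell}\to I$. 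Now in the unscaled expression $\mathcal S^i_{\tP_{\ell}}(\tx^{\ell},\tY_{\ell})=\bigl(\text{scaling-conjugate of }\hU_i\bigr)\cdot(\tmu_{\ell}^{-1}\hG_{\ell}\hat Y_{\ell}\text{-type factor})\cdot\tmu_{\ell}$ up to symmetrization — after rearranging so that the $\tmu_{\ell}$ from $\{\zli\}$ cancels against $\tmu_{\ell}^{-1}G(\tx^{\ell})Y_{\ell}$ — the factor $\zli$ is bounded, the factor $\tmu_{\ell}^{-1}G(\tx^{\ell})Y_{\ell}$ tends to $I$, and the remaining factor converges by continuity to $\Gi(x^{\ast})$ on the relevant subspace; taking $\mathrm{Sym}$ and passing to the limit yields $\mathrm{Sym}(\Gi(x^{\ast})Y_{\ast})$, which is what appears in $\mathcal T_{\ast}$ (note $G(x^{\ast})\Delta Y$ also enters symmetrized). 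The main obstacle is precisely this bookkeeping: the product $\hG^{\fr}\hU_i\hG^{-\fr}$ is \emph{not} by itself convergent or even bounded as $G(\tx^{\ell})\to G(x^{\ast})$ loses rank (the difficulty flagged in \cref{rem1}), so one must keep it glued to the $\tmu_{\ell}$-scaled dual factor at every step, using condition~({\bf P2}) to license the cancellation, and only then pass to the limit. Finally, for part (ii): the displayed $\mathcal T_{\ast}$ is visibly the Jacobian of $\Phi^2_0$ at $w^{\ast}$ — the first and third rows are standard derivatives, and the middle row $\mathrm{Sym}(G(x^{\ast})\Delta Y+\sum_i\Delta x_i\Gi(x^{\ast})Y_{\ast})$ is exactly $D\,[\mathrm{Sym}(G(x)Y)]$ at $w^{\ast}$ applied to $(\Delta x,\Delta Y)$ — so its bijectivity on $\mathcal W$ is immediate from the nondegeneracy, second-order sufficiency and strict complementarity conditions via \cite[Corollary~1]{yamashita2012local}.
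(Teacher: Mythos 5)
Your overall architecture coincides with the paper's: rows one and three of $T_{\tw^{\ell},\tP_{\ell}}$ converge by continuity, part (ii) follows by recognizing $\mathcal{T}_{\ast}$ as the Jacobian of $\Phi^2_0$ at $w^{\ast}$ and invoking the Yamashita--Yabe nonsingularity result, and the whole difficulty sits in the middle block, where you correctly name the two ingredients that must interact: the boundedness of $\zli=\tmu_{\ell}\tP_{\ell}^{-1}\hU_i\hG_{\ell}^{-\fr}\tP_{\ell}$ from ({\bf P2}) and the estimate $\|G_{\ell}\tY_{\ell}-\tmu_{\ell}I\|_F\le\tau\tmu_{\ell}^{1+\xi}$. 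But the decisive step is missing, and the heuristic you substitute for it is not a valid argument. You describe $\mathcal{S}^i_{\tP_{\ell}}$ as (bounded factor) $\times$ (factor $\to I$) $\times$ (factor $\to\Gi(x^{\ast})$); no such multiplicative decomposition of $\mathcal{S}^i_{P}$ exists, and even if it did, a bounded-but-nonconvergent factor multiplied by factors tending to $I$ and to $\Gi(x^{\ast})$ would only give boundedness --- which you had already conceded is all your first attempt yields. The convergence comes from an \emph{additive} cancellation, not a multiplicative one.

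The identity you need --- which the paper derives in two lines from the Lyapunov relation $\hG(x)^{\fr}\hU_i=\hGi(x)-\hU_i\hG(x)^{\fr}$, adding and subtracting $\mu I$ so that the two resulting $\mu\,\hU_i\hG(x)^{-\fr}$ terms cancel exactly --- is
\begin{equation*}
\mathcal{S}^i_{P}(x,Y)=\Gi(x)Y-V_i\left(G(x)Y-\mu I\right)+\left(G(x)Y-\mu I\right)V_i,\qquad V_i:=P^{-1}\hU_i\hG(x)^{-\fr}P .
\end{equation*}
With this in hand the conclusion is immediate: $\|V_i\|_F=\tmu_{\ell}^{-1}\|\zli\|_F=O(\tmu_{\ell}^{-1})$ while $\|G_{\ell}\tY_{\ell}-\tmu_{\ell}I\|_F=O(\tmu_{\ell}^{1+\xi})$, so the commutator term is $O(\tmu_{\ell}^{\xi})\to0$ and $\mathcal{S}^i_{\tP_{\ell}}(\tx^{\ell},\tY_{\ell})\to\Gi(x^{\ast})Y_{\ast}$ by continuity of $\Gi$ and $G$ (no symmetrization is needed until the very end). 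Without writing this identity down your proof does not close: the two summands $P^{-1}\hG(x)^{\fr}\hU_i\hY P$ and $P^{-1}\hG(x)\hY\hU_i\hG(x)^{-\fr}P$ are individually only bounded --- each equals $\mp\,\zli$ plus a convergent remainder --- and it is only in their sum that the non-convergent $\zli$ contributions cancel. Your detour through $\mathcal{S}^i_P(x,Y)=G(x)Q_i$ from the proof of \cref{lem:0612} is a dead end for the same reason, since $Q_i$ contains $\hG(x)^{-\fr}\hU_i\hY$, which blows up as $G(\tx^{\ell})$ loses rank.
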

\begin{proof}
{\rm (1)} We show only the convergence on the second component of $T_{\tw^{\ell},\tP_{\ell}}$, namely,  
for any $\Delta w=(\Delta x,\Delta Y,\Delta z)$, we prove  
\begin{align}
\lim_{\ell\to \infty}
{\rm Sym}\left(G_{\ell}\Delta Y+
\sum_{i=1}^n\Delta x_i
\mathcal{S}_{P}^i(\tilde{x}^{\ell},\tY_{\ell})\right)={\rm Sym}\left(G(x^{\ast})\Delta Y+
\sum_{i=1}^n\Delta x_i\Gi(x^{\ast})Y_{\ast}\right).\notag
\end{align}
To prove this equation, since $\lim_{\ell\to\infty}\Gl=G(x^{\ast})$, it suffices to show that 
\begin{align}
\lim_{\ell\to\infty}\mathcal{S}^i_{\tP_{\ell}}(\tilde{x}^{\ell},\tY_{\ell})=\Gi(x^{\ast})Y_{\ast}\label{eq:2342}
\end{align}
for an arbitrary index $i\in \{1,2,\ldots,n\}$.
Choose $i\in \{1,2,\ldots,n\}$ arbitrarily.
Note that, for any $w\in \mathcal{W}$, we have 
\begin{align*}
\mathcal{S}_{P}^i(x,Y)&=
P^{-1}\left(
\hGi(x)\hY
-
\hU_i\hG(x)^{\fr}\hY
+
\hG(x)\hY\hU_i\hG(x)^{-\fr}
\right)P \\
=&
P^{-1}\left(
\hGi(x)\hY-\hU_i\hG(x)^{\fr}\hY+\mu\hU_i\hG(x)^{-\fr}+(\hG(x)\hY-\mu I)\hU_i\hG(x)^{-\fr}
\right)P\\
=&
P^{-1}\left(
-\hU_i\hG(x)^{-\fr}(\hG(x)\hY-\mu I)+
\hGi(x)\hY+(\hG(x)\hY-\mu I)\hU_i\hG(x)^{-\fr}
\right)P\\
=&-P^{-1}\hU_i\hG(x)^{-\fr}P(G(x)Y-\mu I)+\Gi(x) Y+(G(x)Y-\mu I)P^{-1}\hU_i\hG(x)^{-\fr}P,
\end{align*}
where the first equality follows because  $\hG(x)^{\fr}\hU_i=\hGi(x)-\hU_i\hG(x)^{\fr}$ by \eqref{al:ui2}.
Since $\|\Gl\tY_{\ell}-\tmu_{\ell} I\|_F\le \tau \tmu_{\ell}^{1+\xi}$ from $\twl\in N_{\tmu_{\ell}}^{\tau \tmu_{\ell}^{1+\xi}}$, it holds that
\begin{align*}
\Phi_{i,\ell}
&:=\left\|-\tP_{\ell}^{-1}\mathcal{L}_{\hGl^{\fr}}^{-1}\left(\hGi(\txl)\right)\hGl^{-\fr}\tP_{\ell}\left(\Gl\tY_{\ell}-\tmu_{\ell} I\right)+\left(\Gl\tY_{\ell}-\tmu_{\ell} I\right)\tP_{\ell}^{-1}
\mathcal{L}_{\hGl^{\fr}}^{-1}\left(\hGi(\txl)\right)
\hGl^{-\fr}\tP_{\ell}\right\|_F\\
&\le
2\left\|\tP_{\ell}^{-1}
\mathcal{L}_{\hGl^{\fr}}^{-1}\left(\hGi(\txl)\right)
\hGl^{-\fr}\tP_{\ell}\right\|_F\left\|\Gl\tY_{\ell}-\tmu_{\ell} I\right\|_F\\
&\le 2\tmu_{\ell}\left\|\tP_{\ell}^{-1}
\mathcal{L}_{\hGl^{\fr}}^{-1}\left(\hGi(\txl)\right)
\hGl^{-\fr}\tP_{\ell}\right\|_F\cdot \frac{\tau \tmu_{\ell}^{1+\xi}}{\tmu_{\ell}}\\
&=\rO(\tmu_{\ell}^{\xi}),
\end{align*}
where the second equality holds from the boundedness of $\{\zli\}$ assumed in condition~({\bf P2}).
Hence, noting $\xi>0$ and $\lim_{\ell\to\infty}\tmu_{\ell}=0$, we have $\lim_{\ell\to
\infty}\Phi_{i,\ell} = 0$.
This fact readily yields $\|\mathcal{S}^i_P(\txl,\tYl)-\Gi(x^{\ast})Y_{\ast}\|_F\le \Phi_{i,\ell}+\|\Gi(\txl)\tYl-\Gi(x^{\ast})Y_{\ast}\|_F\to 0$ as $\ell\to \infty$. This means that \eqref{eq:2342} is valid. 
Since $i$ was arbitrary, we obtain the desired consequence.

{\rm (2)}
Notice that $\mathcal{T}_{\ast}$ is nothing but the Jacobian of
the function $\Phi^2_0(w)$ at the KKT triplet $w^{\ast}$.
Based on this fact, we can prove that $\mathcal{T}_{\ast}$ is a one-to-one and onto mapping in a manner similar to \cite[Theorem~1]{yamashita2012local} in the presence of the nondegeneracy, second-order sufficient, and strict complementarity conditions at $w^{\ast}$, which were assumed in the beginning of the section.
\end{proof}

Now, by means of Proposition\,\cref{prop:20190605-1},
let us prove Proposition\,\cref{prop:0613-2}.
\subsubsection*{Proof of Proposition\,\cref{prop:0613-2}}\label{sec:proof_thm}
By Proposition\,\cref{prop:20190605-1}(2), 
by taking $\ell$ sufficiently large, 
$\mathcal{T}_{\tw^{\ell},\tP_{\ell}}$ is a one-to-one and onto mapping and hence equations \eqref{al:0608-1} and \eqref{al:0608-2} have unique solutions. Therefore, by Lemma\,\cref{lem:0612},
equation~\eqref{eq:pre3} with $(P,w,\mu)=(\tP_{\ell},\tw^{\ell},\tmu_{\ell})$ turn out to posses the following solution\,\eqref{al:0415-22020} at $\tw^{\ell}$ uniquely, and the same relation holds between \eqref{eq:corr4} and \eqref{al:0415-12020}:
\begin{subequations}
\begin{align}
\Deltap \tw^{\ell}&=T_{\tw^{\ell},\tP_{\ell}}(\tw^{\ell})^{-1}\eta^{\rm t}_{\ell},\label{al:0415-22020}\\
\Deltac \tw^{\ell}&=T_{\tw^{\ell},\tP_{\ell}}(\tw^{\ell})^{-1}\eta^{\rm c}_{\ell},\label{al:0415-12020}
\end{align}
\end{subequations}
where 
$$
\eta^{\rm t}_{\ell}:=\begin{bmatrix}
0\\
- \tmu_{\ell}I \\
0
\end{bmatrix},\ 
\eta^{\rm c}_{\ell}:=\begin{bmatrix}
-\nabla_{x}L(\tw^{\ell})\\
\tmu_{\ell}I - {\rm Sym}(G_{\ell}\tY_{\ell}) \\
 - h(\tilde{x}^{\ell})
\end{bmatrix}
$$
for each $\ell$.
We next show the second-half claim.
It follows that 
\begin{equation}
\|\eta^{\rm c}_{\ell}\|=\rO(\tmu_{\ell}^{1+\xi})\label{eq:0614-1}
\end{equation}
from
$\twl\in \N_{\tmul}^{\tau\tmul^{1+\xi}}$ and 
$\|\tmu_{\ell}I - {\rm Sym}(G_{\ell}\tY_{\ell})\|_F\le
\|\tmu_{\ell}I -G_{\ell}\tY_{\ell}\|_F$.
Moreover, we have 
\begin{equation}
\|\eta^{\rm t}_{\ell}\|=\sqrt{m}\tmu_{\ell}\label{eq:0614-2}
\end{equation}
by definition.
As the limit of $\{T_{\tw^{\ell},\tP_{\ell}}\}$ is a one-to-one and onto mapping by Proposition\,\cref{prop:20190605-1}(2) again,
equations \eqref{al:0415-22020} and 
\eqref{al:0415-12020} derive
$\|\Deltap \tw^{\ell}\|=\Theta(\|\eta^{\rm t}_{\ell}\|)$ and $\|\Deltac \tw^{\ell}\|=\Theta(\|\eta^{\rm c}_{\ell}\|)$, which together with \eqref{eq:0614-1} and \eqref{eq:0614-2}
imply
\begin{equation*}
\|\Deltap \tw^{\ell}\|=\rO(\tmu_{\ell}),\ 
\|\Deltac \tw^{\ell}\|=\rO(\tmu_{\ell}^{1+\xi}).
\end{equation*} 
The proof is complete.
\hfill $\square$

\subsubsection{Effectiveness of tangential and centering steps}
In this section, we give crucial properties holding at the next points we move to in the tangential and centering steps.
Specifically, we show the following two propositions, where 
$\alpha,\xi$, and $\xi^{\prime}$ are constants satisfying \eqref{al:cond}.
Henceforth,
$\ell_0$ is the positive integer defined in Proposition\,\cref{prop:0613-2} and moreover,
$\Deltap \tw^{\ell}$ and $\Deltac \tw^{\ell}$ are the unique solutions of the linear equations\,\eqref{eq:pre3} and \eqref{eq:corr4} with $(P,w,\mu)=(\tP_{\ell},\tw^{\ell},\tmu_{\ell})$, respectively, for each $\ell\ge \ell_0$.

The first proposition claims that
the next point after performing the tangential step is eventually accommodated by
$\N_{\tmu_{\ell+\fr}}^{\tau\tmu_{\ell+\fr}^{1+\xi^{\prime}}}$, which contains $\N_{\tmu_{\ell+\fr}}^{\tau\tmu_{\ell+\fr}^{1+\xi}}$ because $\xi>\frac{\xi-\alpha}{1+\alpha}>\xi^{\prime}$.
\begin{proposition}\label{prop:0614}
The following properties hold:
\begin{enumerate}
\item\label{prop:0614-2}   
Choose a sequence $\{s_{\ell}\}\subseteq (0,1]$ arbitrarily. 
For $\ell\ge \ell_0$, we have
$
\|G(\tilde{x}^{\ell}+s_{\ell}\Deltap \tilde{x}^{\ell})(\tY_{\ell}+s_{\ell}\Deltap\tY_{\ell})-\mu(s_{\ell})I\|_F=\rO(\tmu_{\ell}^{1+\xi})
$
with $\mu(s_{\ell}):=(1-s_{\ell})\tmu_{\ell}$.
\item\label{prop:0614-1}
It holds that 
\begin{equation}
G(\tilde{x}^{\ell}+(1-\tmu_{\ell}^{\alpha})\Deltap \tilde{x}^{\ell})\in \mS^m_{++},\ \tY_{\ell}+(1-\tmu_{\ell}^{\alpha})\Deltap\tY_{\ell}\in \mS^m_{++} \label{eq:1517}
\end{equation}
for any sufficiently large $\ell\ge \ell_0$.
\item\label{prop:0614-3} Let $\twt^{\ell+\fr}:=\tw^{\ell} + (1-\tmu_{\ell}^{\alpha})\Deltap \tw^{\ell}$ and
$\tmu_{\ell+\fr}:=\tmu_{\ell}^{1+\alpha}$ for each $\ell$.
Then,
$\twt^{\ell+\fr}\in \N_{\tmu_{\ell+\fr}}^{\tau\tmu_{\ell+\fr}^{1+\xi^{\prime}}}$
holds for any sufficiently large $\ell\ge \ell_0$.
\end{enumerate}
\end{proposition}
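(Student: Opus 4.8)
The plan is to treat the three items in order, each reducing to the error bounds already established in \cref{prop:0613-2} together with the limiting behavior of the scaled Newton operator in \cref{prop:20190605-1}. For item~\ref{prop:0614-2}, I would expand $G(\tx^{\ell}+s_{\ell}\Deltap\tx^{\ell})(\tY_{\ell}+s_{\ell}\Deltap\tY_{\ell})-\mu(s_{\ell})I$ using Taylor's theorem on $G$ and bilinearity in $Y$. The zeroth-order part is $G_{\ell}\tY_{\ell}-\tmu_{\ell}I$, which is $O(\tmu_{\ell}^{1+\xi})$ by $\tw^{\ell}\in\N_{\tmu_{\ell}}^{\tau\tmu_{\ell}^{1+\xi}}$ and \eqref{eq:0218}. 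The linear-in-$s_{\ell}$ part is $s_{\ell}\big(\mathcal{J}G(\tx^{\ell})\Deltap\tx^{\ell}\cdot\tY_{\ell}+G_{\ell}\Deltap\tY_{\ell}+\tmu_{\ell}I\big)$; the bracket is exactly (a rearrangement of) the second block of the linear equation \eqref{eq:pre3} read through \cref{lem:0612}, so it equals a controlled remainder — this is where the identity $\mathcal{S}_P^i(x,Y)=\Gi(x)Y+O(\|G(x)Y-\mu I\|)$ derived inside the proof of \cref{prop:20190605-1} is used, giving that the bracket is $O(\tmu_{\ell}^{1+\xi})+O(\|\Deltap\tx^{\ell}\|\cdot\tmu_{\ell}^{\xi})$, hence $O(\tmu_{\ell}^{1+\xi})$ via $\|\Deltap\tw^{\ell}\|=O(\tmu_{\ell})$. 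The remaining quadratic-in-$\Deltap$ terms are $O(\|\Deltap\tw^{\ell}\|^2)=O(\tmu_{\ell}^2)=o(\tmu_{\ell}^{1+\xi})$ since $\xi<1$. Summing gives the claim uniformly in $s_{\ell}\in(0,1]$.

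For item~\ref{prop:0614-1}, I would argue that $\tw^{\ell}\to w^{\ast}$ and $\|\Deltap\tw^{\ell}\|=O(\tmu_{\ell})$ imply $G(\tx^{\ell}+(1-\tmu_{\ell}^{\alpha})\Deltap\tx^{\ell})\to G(x^{\ast})$ and likewise for $\tY$. On the range of $G(x^{\ast})$ (respectively the range of $Y_{\ast}$) positive definiteness is inherited from the strict complementarity condition $G(x^{\ast})+Y_{\ast}\in S^m_{++}$ for large $\ell$. The delicate directions are those in the null space of $G(x^{\ast})$: there one uses item~\ref{prop:0614-2} with $s_{\ell}=1-\tmu_{\ell}^{\alpha}$, which yields $\|G(\tx^{\ell}+(1-\tmu_{\ell}^{\alpha})\Deltap\tx^{\ell})(\tY_{\ell}+(1-\tmu_{\ell}^{\alpha})\Deltap\tY_{\ell})-\tmu_{\ell}^{1+\alpha}I\|_F=O(\tmu_{\ell}^{1+\xi})=o(\tmu_{\ell}^{1+\alpha})$ because $\alpha<\xi$. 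Writing $\widehat{G}_{\ell}^{+}:=G(\tx^{\ell}+(1-\tmu_{\ell}^{\alpha})\Deltap\tx^{\ell})$ and $\widehat{Y}_{\ell}^{+}:=\tY_{\ell}+(1-\tmu_{\ell}^{\alpha})\Deltap\tY_{\ell}$, this forces the product $\widehat{G}_{\ell}^{+}\widehat{Y}_{\ell}^{+}$ to be close to $\tmu_{\ell}^{1+\alpha}I\succ0$; combined with $\widehat{Y}_{\ell}^{+}\to Y_{\ast}$ and a standard eigenvalue perturbation/continuity argument (the small eigenvalues of $\widehat{G}_{\ell}^{+}$ are of order $\tmu_{\ell}^{1+\alpha}/\lambda$, with $\lambda$ a nonzero eigenvalue of $Y_{\ast}$, hence strictly positive), one concludes $\widehat{G}_{\ell}^{+}\in S^m_{++}$; symmetrically $\widehat{Y}_{\ell}^{+}\in S^m_{++}$ for large $\ell$.

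For item~\ref{prop:0614-3}, with $\twt^{\ell+\fr}=\tw^{\ell}+(1-\tmu_{\ell}^{\alpha})\Deltap\tw^{\ell}$ and $\tmu_{\ell+\fr}=\tmu_{\ell}^{1+\alpha}$, I need to bound $\|\Phi^1_{\tmu_{\ell+\fr}}(\twt^{\ell+\fr})\|$ by $\tau\tmu_{\ell+\fr}^{1+\xi^{\prime}}=\tau\tmu_{\ell}^{(1+\alpha)(1+\xi^{\prime})}$. The second block is handled by item~\ref{prop:0614-2} with $s_{\ell}=1-\tmu_{\ell}^{\alpha}$ (so $\mu(s_{\ell})=\tmu_{\ell}^{1+\alpha}=\tmu_{\ell+\fr}$), giving $O(\tmu_{\ell}^{1+\xi})$; this is $\le\tau\tmu_{\ell}^{(1+\alpha)(1+\xi^{\prime})}$ for large $\ell$ precisely because $(1+\alpha)(1+\xi^{\prime})<1+\xi$, which follows from $\xi^{\prime}<(\xi-\alpha)/(1+\alpha)$ in \eqref{al:cond}. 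The first and third blocks ($\nabla_xL$ and $h$) are handled by Taylor expansion along the exact step: since $\Deltap\tw^{\ell}$ solves \eqref{al:scalednewton-1}--\eqref{al:scalednewton-4} with right-hand side $0$ (the tangential equation has zero right-hand side in those blocks), the linearizations of $\nabla_xL$ and $h$ at $\tw^{\ell}$ in the direction $\Deltap\tw^{\ell}$ vanish; evaluating at the fractional step $s_{\ell}=1-\tmu_{\ell}^{\alpha}$ leaves $\nabla_xL(\tw^{\ell})+O(\|\Deltap\tw^{\ell}\|)$ from the unused fraction $\tmu_{\ell}^{\alpha}\Deltap\tw^{\ell}$ plus $O(\|\Deltap\tw^{\ell}\|^2)$ from second order, i.e.\ $O(\tmu_{\ell}^{1+\xi})+O(\tmu_{\ell}^{1+\alpha})=O(\tmu_{\ell}^{1+\alpha})$ (using $\|\nabla_xL(\tw^{\ell})\|,\|h(\tx^{\ell})\|=O(\tmu_{\ell}^{1+\xi})$ from membership in $\N$); this is again $o(\tmu_{\ell+\fr}^{1+\xi^{\prime}})$ for the same exponent reason. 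Also $\twt^{\ell+\fr}\in\Wcal_{++}$ by item~\ref{prop:0614-1}. Collecting the three blocks gives $\twt^{\ell+\fr}\in\N_{\tmu_{\ell+\fr}}^{\tau\tmu_{\ell+\fr}^{1+\xi^{\prime}}}$.

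\textbf{Anticipated main obstacle.} The genuinely delicate point is item~\ref{prop:0614-1} along the null space of $G(x^{\ast})$: near the boundary $G(x)\in S^m_+\setminus S^m_{++}$ the map $\Xi^P_{\mu}$ and its Jacobian degenerate (\cref{rem:1}(i)), so I cannot simply differentiate; instead the positivity has to be extracted quantitatively from the product bound in item~\ref{prop:0614-2}, controlling the smallest eigenvalue of the perturbed $G$ through the smallest eigenvalue of the product and the nonzero eigenvalues of $Y_{\ast}$ — this is exactly the step where the gap $\alpha<\xi$ is essential. Everything else is routine Taylor expansion plus the exponent bookkeeping enforced by \eqref{al:cond}.
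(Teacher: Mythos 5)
Your item~\ref{prop:0614-2} is essentially the paper's argument: the paper derives the identity $\Deltap G_{\ell}\tY_{\ell}+G_{\ell}\Deltap\tY_{\ell}+\tmu_{\ell}I=V^{\rm t}_{\tP_{\ell}}(G_{\ell}\tY_{\ell}-\tmu_{\ell}I)-(G_{\ell}\tY_{\ell}-\tmu_{\ell}I)V^{\rm t}_{\tP_{\ell}}$ directly from \eqref{al:scalednewton-2} and bounds $\|V^{\rm t}_{\tP_{\ell}}\|_F=O(1)$ via ({\bf P2}), while you route the same cancellation through the $\mathcal{S}_{P}^i$ decomposition from the proof of \cref{prop:20190605-1}; these are equivalent (note you need the \emph{unsymmetrized} identity $G\Deltap Y+\sum_i\Deltap x_i\mathcal{S}_P^i=-\mu I$, which does hold, not merely its ${\rm Sym}$ part). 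Item~\ref{prop:0614-1} is where you genuinely diverge: the paper argues by contradiction at the first boundary-crossing time $\hat{t}_{\ell}\in(0,1]$, applying item~\ref{prop:0614-2} with $s_{\ell}=\hat{t}_{\ell}(1-\tmu_{\ell}^{\alpha})$ (this is why item~\ref{prop:0614-2} is stated for \emph{arbitrary} $\{s_{\ell}\}$) and \cref{prop:xyx} to force $\lambda_{\min}$ of the symmetrized product to be simultaneously $0$ and within $O(\tmu_{\ell}^{1+\xi})$ of $(1-\hat{t}_{\ell}(1-\tmu_{\ell}^{\alpha}))\tmu_{\ell}\ge\tmu_{\ell}^{1+\alpha}$. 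Your endpoint argument can be made to work, but as written the key inference is unjustified: a product of two symmetric matrices being close to $\tmu_{\ell}^{1+\alpha}I\succ0$ does \emph{not} by itself make either factor positive definite (${\rm diag}(1,-1)\cdot{\rm diag}(1,-1)=I$). You must pass through quadratic forms: for a unit eigenvector $v$ of $\widehat{G}^{+}_{\ell}$ with eigenvalue $\theta_{\ell}\to0$, one has $\theta_{\ell}=v^{\top}\widehat{G}^{+}_{\ell}\widehat{Y}^{+}_{\ell}v/(v^{\top}\widehat{Y}^{+}_{\ell}v)$, the numerator is $\ge\tmu_{\ell}^{1+\alpha}-O(\tmu_{\ell}^{1+\xi})>0$ by $\alpha<\xi$, and the denominator is eventually bounded below by a positive constant because such $v$ converge to $\ker G(x^{\ast})$, where $Y_{\ast}$ is positive definite by strict complementarity. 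That eigenspace-perturbation step is the content hiding behind your phrase ``standard continuity argument'' and needs to be written out.

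There is a concrete error in item~\ref{prop:0614-3}. The ``unused fraction'' term $\tmu_{\ell}^{\alpha}\cdot O(\|\Deltap\tw^{\ell}\|)=O(\tmu_{\ell}^{1+\alpha})$ does not exist, and if it did your proof would fail: $O(\tmu_{\ell}^{1+\alpha})=o(\tmu_{\ell+\fr}^{1+\xi^{\prime}})=o(\tmu_{\ell}^{(1+\alpha)(1+\xi^{\prime})})$ would require $1+\alpha>(1+\alpha)(1+\xi^{\prime})$, i.e.\ $\xi^{\prime}<0$, contradicting \eqref{al:cond}. The point is that the tangential system \eqref{eq:pre3} has \emph{zero} right-hand side in the first and third blocks, so the directional derivatives of $\nabla_xL$ and $h$ along $\Deltap\tw^{\ell}$ vanish identically; multiplying by the step fraction $1-\tmu_{\ell}^{\alpha}$ still gives zero, leaving no first-order residue at all. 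Taylor's theorem (\cref{prop:order}) then gives $\|\nabla_xL(\twt^{\ell+\fr})\|_2=\|\nabla_xL(\tw^{\ell})\|_2+O(\|\Deltap\tw^{\ell}\|^2)=O(\tmu_{\ell}^{1+\xi}+\tmu_{\ell}^{2})=O(\tmu_{\ell}^{1+\xi})$, and likewise for $h$; this \emph{is} $o(\tmu_{\ell+\fr}^{1+\xi^{\prime}})$ since $(1+\alpha)(1+\xi^{\prime})<1+\xi$ by \eqref{al:cond}. With that correction (which is exactly the paper's computation \eqref{al:0629-1}--\eqref{al:0629-3}), the rest of your item~\ref{prop:0614-3} — the second block via item~\ref{prop:0614-2} with $s_{\ell}=1-\tmu_{\ell}^{\alpha}$ and interiority via item~\ref{prop:0614-1} — is correct.
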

The second proposition claims that 
the next point after performing the centering step is eventually accommodated by
$\N_{\tmu_{\ell}}^{\tau\tmu_{\ell}^{1+2\kappa}}$, which is contained by  $\N_{\tmu_{\ell}}^{\tau\tmu_{\ell}^{1+\xi}}$ if $\frac{\xi}{2}<\kappa$.
\begin{proposition}\label{prop:0626}
The following properties hold:
\begin{enumerate}
\item\label{prop:0626-2}
Choose a sequence $\{s_{\ell}\}\subseteq (0,1]$ arbitrarily.
Then, we have
$$\|G(\tilde{x}^{\ell}+s_{\ell}\Deltac \tilde{x}^{\ell})(\tY_{\ell}+s_{\ell}\Deltac \tY_{\ell})-\tmu_{\ell}I\|_F=\rO((1-s_{\ell})\tmu_{\ell}^{1+\xi}+2s_{\ell}\tmu_{\ell}^{1+2\xi}).$$
\item\label{prop:0626-1}
For any sufficiently large $\ell\ge \ell_0$, we have
\begin{equation}
G(\tilde{x}^{\ell}+\Deltac \tilde{x}^{\ell})\in \mS^m_{++},\ \tY_{\ell}+\Deltac \tY_{\ell}\in \mS^m_{++}. \label{eq:2213}
\end{equation}
\item\label{prop:0626-3}
Choose $0<\kappa<\xi$ and let $\twc^{\ell+\fr}:=\tw^{\ell} +\Deltac \tw^{\ell}$ for each $\ell$. Then, $\twc^{\ell+\fr}\in 
\N_{\tmu_{\ell}}^{\tau\tmu_{\ell}^{1+2\kappa}}$
holds for any sufficiently large $\ell\ge \ell_0$.
\end{enumerate}
\end{proposition}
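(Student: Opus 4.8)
The plan is to prove the three items in order, relying heavily on the error bound $\|\Deltac\tw^{\ell}\| = O(\tmu_{\ell}^{1+\xi})$ from item~\ref{e2} of \cref{prop:0613-2}, together with the defining equations \eqref{al:0608-2} (equivalently \eqref{al:0608-3}) for the centering direction. For item~\ref{prop:0626-2}, I would first expand the product $G(\txl+s_{\ell}\Deltac\txl)(\tYl+s_{\ell}\Deltac\tYl)$ by Taylor-expanding $G$ around $\txl$: writing $G(\txl+s_{\ell}\Deltac\txl) = G_{\ell} + s_{\ell}\mathcal{J}G(\txl)\Deltac\txl + O(s_{\ell}^2\|\Deltac\txl\|^2)$, the product becomes $G_{\ell}\tYl + s_{\ell}\bigl(G_{\ell}\Deltac\tYl + (\mathcal{J}G(\txl)\Deltac\txl)\tYl\bigr) + O(\tmu_{\ell}^{2+2\xi})$. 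The key algebraic point is that the symmetrized version of the linear term $G_{\ell}\Deltac\tYl + \sum_i \Deltac x_i^{\ell}\Gi(\txl)\tYl$ is \emph{not} exactly what appears in \eqref{al:0608-3} — that equation involves $\mathcal{S}_P^i$ rather than $\Gi(\txl)\tYl$ — so I would invoke the computation in the proof of \cref{prop:20190605-1}(i) showing $\mathcal{S}_{\tP_\ell}^i(\txl,\tYl) = \Gi(\txl)\tYl + O(\tmu_{\ell}^{\xi})\cdot O(\|G_{\ell}\tYl - \tmu_{\ell}I\|_F)$, i.e.\ the discrepancy is $O(\tmu_{\ell}^{1+2\xi})$ since $\|G_{\ell}\tYl-\tmu_{\ell}I\|\le\tau\tmu_{\ell}^{1+\xi}$. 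Using \eqref{al:0608-3} with $\mu = \tmu_{\ell}$, the symmetric part of the linear term equals $\tmu_{\ell}I - \mathrm{Sym}(G_{\ell}\tYl) + O(\tmu_{\ell}^{1+2\xi})$; combining everything and regrouping yields $\mathrm{Sym}$ of the full product $= \mathrm{Sym}(G_{\ell}\tYl)(1-s_{\ell}) + s_{\ell}\tmu_{\ell}I + O(s_{\ell}\tmu_{\ell}^{1+2\xi}) + O(\tmu_{\ell}^{2+2\xi})$. Subtracting $\tmu_{\ell}I$ and using $\|\mathrm{Sym}(G_{\ell}\tYl) - \tmu_{\ell}I\| = O(\tmu_{\ell}^{1+\xi})$ gives the claimed bound $O((1-s_{\ell})\tmu_{\ell}^{1+\xi} + s_{\ell}\tmu_{\ell}^{1+2\xi})$ (absorbing the $O(\tmu_{\ell}^{2+2\xi})$ term into the second summand since $2+2\xi > 1+2\xi$). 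One subtlety is that the product $GY$ is not symmetric in general, so I would need to argue — as is standard in this literature and presumably handled via \cref{prop:xyx} referenced in the proof of \cref{prop:scaling} — that control of the symmetric part together with the structure near $w^{\ast}$ controls the full Frobenius norm; alternatively the statement as written may only genuinely need the symmetric part, in which case the antisymmetric part is handled separately by noting $\mathrm{Sym}(G(\txl)\tYl)$-type quantities and the skew part both vanish at $w^{\ast}$.

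For item~\ref{prop:0626-1}, I would take $s_{\ell} = 1$ in item~\ref{prop:0626-2}, which gives $\|G(\txl+\Deltac\txl)(\tYl+\Deltac\tYl) - \tmu_{\ell}I\|_F = O(\tmu_{\ell}^{1+2\xi})$, hence in particular $o(\tmu_{\ell})$. Since $\txl\to x^{\ast}$, $\tYl\to Y_{\ast}$, and $\|\Deltac\tw^{\ell}\|\to 0$, the matrix $G(\txl+\Deltac\txl)(\tYl+\Deltac\tYl)$ converges to $G(x^{\ast})Y_{\ast}$, whose eigenvalue structure is governed by the strict complementarity condition $G(x^{\ast}) + Y_{\ast}\in S^m_{++}$. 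The argument I have in mind: on the range of $G(x^{\ast})$, $Y_{\ast}$ is strictly positive definite (by strict complementarity and $G(x^{\ast})Y_{\ast}=0$ on a simultaneously-diagonalizable basis), so for large $\ell$ the perturbed product stays positive-definite on that subspace; on the complementary subspace, $G(\txl+\Deltac\txl)$ has small eigenvalues but the product being $\approx\tmu_{\ell}I > 0$ forces the corresponding eigenvalue of $G(\txl+\Deltac\txl)$ to be of order $\tmu_{\ell}/\lambda_{\min}$(block of $\tYl$) $> 0$. This is the same block-decomposition argument used in the MZ-family analyses \cite{yamashita2012local}, so I would cite that structure; the technical care needed is that $G(\txl+\Deltac\txl)$ need not be symmetric after the Taylor truncation — but $G$ maps into $S^m$ exactly, so $G(\txl+\Deltac\txl)\in S^m$ genuinely, and only $\tYl + \Deltac\tYl \in S^m$ needs checking, which follows from $\Deltac\tYl\in S^m$ (the second block of the linear system \eqref{al:0608-2} is symmetric by construction). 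So positive-definiteness of $\tYl+\Deltac\tYl$ follows from $\tYl\to Y_{\ast}$ having positive eigenvalues exactly where $G(x^{\ast})$ is degenerate, plus $\|\Deltac\tYl\| = O(\tmu_{\ell}^{1+\xi})\to 0$; and then positive-definiteness of $G(\txl+\Deltac\txl)$ follows from the product bound as sketched.

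For item~\ref{prop:0626-3}, set $\twc^{\ell+\fr} = \tw^{\ell} + \Deltac\tw^{\ell}$; item~\ref{prop:0626-1} already shows $\twc^{\ell+\fr}\in\Wcal_{++}$, so it remains to bound $\|\Phi^1_{\tmu_{\ell}}(\twc^{\ell+\fr})\|$. The three blocks: the $h$-block is $h(\txl + \Deltac\txl)$, and since the third component of \eqref{al:0608-2} reads $\nabla h(\txl)^{\top}\Deltac\txl = -h(\txl)$, Taylor expansion gives $h(\txl+\Deltac\txl) = h(\txl) + \nabla h(\txl)^{\top}\Deltac\txl + O(\|\Deltac\txl\|^2) = O(\tmu_{\ell}^{2+2\xi})$. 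The $\nabla_xL$-block similarly uses the first component of \eqref{al:0608-2}, which is a Newton step on $\nabla_xL$, giving $\nabla_xL(\twc^{\ell+\fr}) = O(\|\Deltac\tw^{\ell}\|^2) = O(\tmu_{\ell}^{2+2\xi})$ by the twice-continuous-differentiability and second-order Taylor remainder. The middle block $G(\txl+\Deltac\txl)(\tYl+\Deltac\tYl) - \tmu_{\ell}I$ is $O(\tmu_{\ell}^{1+2\xi})$ by item~\ref{prop:0626-2} with $s_{\ell}=1$. Hence $\|\Phi^1_{\tmu_{\ell}}(\twc^{\ell+\fr})\| = O(\tmu_{\ell}^{1+2\xi})$, and since $2\kappa < 2\xi$ we get $O(\tmu_{\ell}^{1+2\xi}) = o(\tmu_{\ell}^{1+2\kappa})$, so in particular $\le \tau\tmu_{\ell}^{1+2\kappa}$ for $\ell$ large, which is the claim.

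I expect the main obstacle to be item~\ref{prop:0626-1}: controlling positive-definiteness of $G(\txl+\Deltac\txl)$ near a boundary point where $G(x^{\ast})$ is singular. The Frobenius-norm proximity to $\tmu_{\ell}I$ from item~\ref{prop:0626-2} is not by itself enough — one must exploit the block structure coming from the spectral decomposition of $G(x^{\ast})$ and the strict complementarity condition to conclude that the small eigenvalues of $G(\txl+\Deltac\txl)$ are in fact positive and of the right order $\Theta(\tmu_{\ell})$. This is precisely where the earlier machinery (\cref{prop:xyx} and the apparatus behind \cref{prop:scaling}, or the corresponding lemmas in \cite{yamashita2012local}) must be invoked carefully, and getting the quantitative eigenvalue lower bound — rather than just a sign — is the delicate part. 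The remaining items are comparatively mechanical Taylor expansions once the defining linear systems \eqref{al:0608-1}–\eqref{al:0608-2} and the error bounds of \cref{prop:0613-2} are in hand.
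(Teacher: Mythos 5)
There are two genuine gaps. First, in item~1 the quantity to be bounded is the Frobenius norm of the \emph{full, non-symmetric} product $G(\txl+s_{\ell}\Deltac\txl)(\tYl+s_{\ell}\Deltac\tYl)-\tmul I$, and your route through the symmetrized equation~\eqref{al:0608-3} only controls its symmetric part: that equation pins down ${\rm Sym}(\Gl\Deltac\tYl+\sum_i\Deltac x^{\ell}_i\mathcal{S}^i_{\tPl}(\txl,\tYl))$ but says nothing about the skew part of $\Deltac G_{\ell}\tYl+\Gl\Deltac\tYl$, and crude norm bounds on that skew part only give $O(\tmul^{1+\xi})$, which at $s_{\ell}=1$ is too large for the claimed $O(\tmul^{1+2\xi})$. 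You flag this subtlety, but the fallback you suggest (``the statement may only need the symmetric part'') is not available: item~2 applies \cref{prop:xyx} to the full product, and item~3 needs the full product because $\N^r_{\mu}$ is defined through $\Phi^1_{\mu}$, i.e.\ through $G(x)Y-\mu I$. The repair is to use the \emph{non-symmetrized} identity $\Gl\Deltac\tYl+\sum_i\Deltac x_i^{\ell}\,\mathcal{S}^i_{\tPl}(\txl,\tYl)=\tmul I-\Gl\tYl$, which the unique solution satisfies by \cref{lem:0612} (it is the MT equation conjugated by $\tPl^{-1}\hGl^{\fr}$ and $\hGl^{-\fr}\tPl$). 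This yields the exact cancellation $\Gl\tYl-\tmul I+s_{\ell}(\Deltac G_{\ell}\tYl+\Gl\Deltac\tYl)=(1-s_{\ell})(\Gl\tYl-\tmul I)+s_{\ell}\bigl(V^{\rm c}_{\tPl}(\Gl\tYl-\tmul I)-(\Gl\tYl-\tmul I)V^{\rm c}_{\tPl}\bigr)$ with $V^{\rm c}_{\tPl}:=\tPl^{-1}\mathcal{L}^{-1}_{\hGl^{\fr}}(\Deltac\hGl)\hGl^{-\fr}\tPl=O(\tmul^{\xi})$ by ({\bf P2}); the commutator term is $O(\tmul^{1+2\xi})$ and the skew parts cancel exactly where they must.

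Second, item~2 is not actually proved. You correctly identify positive definiteness of $G(\txl+\Deltac\txl)$ as the hard point, but the mechanism you propose---inferring positivity of the factors from the product being close to $\tmul I$---does not work by itself (a product of two symmetric matrices can be close to $\tmul I\succ0$ with neither factor positive semidefinite), and the block-decomposition argument you fall back on is sketched but never carried out. The paper's device is elementary and avoids all spectral decomposition of $G(x^{\ast})$: suppose failure along a subsequence, take the first boundary-crossing step length $\hat{t}_{\ell}\in(0,1]$ at which $B_{\ell,1}:=G(\txl+\hat{t}_{\ell}\Deltac\txl)^{\fr}(\tYl+\hat{t}_{\ell}\Deltac\tYl)G(\txl+\hat{t}_{\ell}\Deltac\txl)^{\fr}$ is positive semidefinite with $\lambda_{\min}(B_{\ell,1})=0$; then $\|B_{\ell,1}-\tmul I\|_F\ge|\lambda_{\min}(B_{\ell,1})-\tmul|=\tmul$, while \cref{prop:xyx} and item~1 with $s_{\ell}=\hat{t}_{\ell}$ give $\|B_{\ell,1}-\tmul I\|_F=O((1-\hat{t}_{\ell})\tmul^{1+\xi}+\hat{t}_{\ell}\tmul^{1+2\xi})=o(\tmul)$, a contradiction. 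Your item~3 matches the paper's argument.
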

In order to prove the preceding two propositions, we require the following two results.
\begin{proposition}\label{prop:order}
Let $F:\R^p\to \R^q$ be a twice continuously differentiable function
and $\{(v^{\ell},\Delta v^{\ell})\}\subseteq \R^p\times \R^p$ be a sequence converging to some point $(v^{\ast},0)\in \R^p\times \R^p$. Then, 
$\left\|F(v^{\ell}+\Delta v^{\ell})-F(v^{\ell})-\nabla F(v^{\ell})^{\top}\Delta v^{\ell}
\right\|_2 = \rO(\|\Delta v^{\ell}\|_2^2)$.
\end{proposition}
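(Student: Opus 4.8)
\textbf{Proof proposal for Proposition~\ref{prop:order}.}
The plan is to recognize this as a standard second-order Taylor remainder estimate and to prove it coordinatewise. First I would reduce to the scalar case: since $\|F(v^{\ell}+\Delta v^{\ell})-F(v^{\ell})-\nabla F(v^{\ell})^{\top}\Delta v^{\ell}\|_2$ is controlled (up to a constant depending only on $q$) by the maximum over the components $F_1,\dots,F_q$ of $F$, it suffices to prove the bound for a single twice continuously differentiable $g:\R^p\to\R$, i.e.\ that $|g(v^{\ell}+\Delta v^{\ell})-g(v^{\ell})-\nabla g(v^{\ell})^{\top}\Delta v^{\ell}| = O(\|\Delta v^{\ell}\|_2^2)$.

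For the scalar claim I would invoke Taylor's theorem with the integral (or Lagrange) form of the remainder along the segment joining $v^{\ell}$ to $v^{\ell}+\Delta v^{\ell}$:
\begin{equation}
g(v^{\ell}+\Delta v^{\ell})-g(v^{\ell})-\nabla g(v^{\ell})^{\top}\Delta v^{\ell}
= \int_0^1 (1-t)\,(\Delta v^{\ell})^{\top}\nabla^2 g\bigl(v^{\ell}+t\Delta v^{\ell}\bigr)\,\Delta v^{\ell}\,dt.
\notag
\end{equation}
Taking norms and using Cauchy--Schwarz gives a bound by $\tfrac12\sup_{t\in[0,1]}\|\nabla^2 g(v^{\ell}+t\Delta v^{\ell})\|\cdot\|\Delta v^{\ell}\|_2^2$. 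The key point is that the supremum of the Hessian norm stays bounded: since $(v^{\ell},\Delta v^{\ell})\to(v^{\ast},0)$, all the segments $\{v^{\ell}+t\Delta v^{\ell}: t\in[0,1]\}$ lie, for $\ell$ large, in a fixed compact neighborhood of $v^{\ast}$; by continuity of $\nabla^2 g$ (which holds because $g$ is $C^2$), $\|\nabla^2 g\|$ is bounded by some constant $M$ on that neighborhood. Hence the remainder is bounded by $\tfrac{M}{2}\|\Delta v^{\ell}\|_2^2$ for all sufficiently large $\ell$, which is exactly the asserted $O(\|\Delta v^{\ell}\|_2^2)$ estimate (recall the paper's convention that $O(\cdot)$ need only hold for $\ell$ large).

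I do not anticipate any real obstacle here; this is a routine application of Taylor's theorem, and the only mild care needed is the observation that convergence of $(v^{\ell},\Delta v^{\ell})$ to $(v^{\ast},0)$ confines the relevant segments to a compact set on which the second derivatives are uniformly bounded. Summing the coordinatewise bounds (or directly bounding the Euclidean norm of the vector of remainders) then yields the statement for $F$ with an implied constant depending on $q$ and on the Hessian bounds of the $F_i$ near $v^{\ast}$.
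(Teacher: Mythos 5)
Your proposal is correct and follows essentially the same route as the paper: the paper's proof also applies Taylor's theorem componentwise with the integral remainder $\int_0^1(1-t)(\Delta v^{\ell})^{\top}\nabla^2F_i(v^{\ell}+t\Delta v^{\ell})\Delta v^{\ell}\,dt$ and concludes from there. Your write-up merely spells out the compactness/boundedness of the Hessian step that the paper leaves implicit.
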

\begin{proof}
By Taylor's theorem, 
$
F_i(v^{\ell}+\Delta v^{\ell})-F_i(v^{\ell})-\nabla F_i(v^{\ell})^{\top}\Delta v^{\ell}
=\int_0^1 (1-t)(\Delta v^{\ell})^{\top}\nabla^2F_i(v^{\ell}+t\Delta v^{\ell})\Delta v^{\ell}dt
$
for $i=1,2,\ldots,p$, which yields the assertion.
\hfill$\Box$
\end{proof}
\begin{proposition}\label{prop:xyx}
Let $X\in \mS^m_+$, $Y\in \mS^m$, and $\mu\in \R$.
Then, 
$\|X^{\fr}YX^{\fr}-\mu I\|^2_{\rm F} 
+\frac{\|XY-YX\|^2_{\rm F}}{2}
= \|XY-\mu I\|^2_{\rm F}$, and thus 
$\|X^{\fr}YX^{\fr}-\mu I\|_{\rm F}\le \|XY-\mu I\|_{\rm F}$ holds.
\end{proposition}
\begin{proof}
{For matrices $A,B\in \mS^m$ and a scalar $\mu\in \R$, it follows that
\begin{equation}
\|ABA-\mu I\|_{\rm F}^2+\frac{\|A^2B-BA^2\|^2_{\rm F}}{2}=\|A^2B-\mu I\|^2_{\rm F}, \label{eq:2023-1}
\end{equation}
from 
\begin{align}
\|A^2B-\mu I\|_{\rm F}^2-\|ABA-\mu I\|_{\rm F}^2
&={\rm Tr}(A^2B^2A^2-ABA^2BA)\notag\\
&=\frac{1}{2}{\rm Tr}(BA^4B+A^2B^2A^2-BA^2BA^2-A^2BA^2B)\notag\\
&=\frac{1}{2}{\rm Tr}\left((BA^2-A^2B)(A^2B-BA^2)\right)\notag\\
&=\frac{1}{2}\|A^2B-BA^2\|_{\rm F}^2,\notag
\end{align}
where the second equality follows from 
$
{\rm Tr}(BA^4B)={\rm Tr}(A^2B^2A^2)
$
and ${\rm Tr}(ABA^2BA)={\rm Tr}(BA^2BA^2)={\rm Tr}(A^2BA^2B)$. By setting $(A,B):=(X^{\fr},Y)$ 
with $(X,Y)\in \mS^m_{+}\times \mS^m$
in \eqref{eq:2023-1}, we obtain the desired assertion.} 
\hfill $\Box$
\end{proof}

We are now in a position to prove Propositions\,\cref{prop:0614} and \cref{prop:0626}.
\subsubsection*{Proof of Proposition\,\cref{prop:0614}}
We make frequent use of the following facts from Proposition\,\cref{prop:0613-2}\,\eqref{e1}:
\begin{equation}
\|\Deltap\txl\|_2=\rO(\tmul),\ \|\Deltap\tYl\|_{\rm F}=\rO(\tmul),\ \|\Deltap\twl\|=\rO(\tmul).
\label{eq:mousukoshi2}
\end{equation}
For each $\ell$, let $\Deltap x_i^{\ell}\in \R$ be the $i$-th element of 
$\Deltap x^{\ell}$, 
\begin{align}
\Deltap G_{\ell}:=\sum_{i=1}^n\Deltap x_i^{\ell}\Gi(\tilde{x}^{\ell}),\
\Deltap\hGl:=\sum_{i=1}^n\Deltap x_i^{\ell}\hGi(\tilde{x}^{\ell})=P_{\ell}\Deltap \Gl P_{\ell}^{\top}.
\label{al:ast3}
\end{align}
Define
$R_{\ell}(\Delta\tilde{x}):=G(\tilde{x}^{\ell}+\Delta x)-\Gl-\sum_{i=1}^n\Delta	 x_i\Gi(\tilde{x}^{\ell})\in S^m$
for any $\Delta x\in \R^n$. By \eqref{eq:mousukoshi2} and Proposition\,\cref{prop:order} with $F$ replaced by $G$,
\begin{equation}
\|R_{\ell}(s_{\ell}\Deltap \tilde{x}^{\ell})\|_{\rm F}=
\rO(s_{\ell}^2\tmu^2_{\ell}).
\label{eq:0701-1}
\end{equation}
Now, we first show item\,\eqref{prop:0614-2}. 
Letting $V_{\tP_{\ell}}^{\rm t}:=\tP_{\ell}^{-1}\mathcal{L}_{\hGl^{\fr}}^{-1}(\Deltap \hG_{\ell})\hG_{\ell}^{-\fr}\tP_{\ell}$
for each $\ell$ yields
\begin{align}
\|V_{\tP_{\ell}}^{\rm t}\|_{\rm F}             &\le \frac{\|\Deltap \tilde{x}^{\ell}\|_2}{\tmu_{\ell}} \sum_{i=1}^n\tmu_{\ell}\|\tP_{\ell}^{-1}
              \mathcal{L}_{\hGl^{\fr}}^{-1}(\hGi(\tilde{x}^{\ell}))\hGl^{-\fr}
              \tP_{\ell}\|_{\rm F}
               =\rO(1),\label{al:1}
\end{align}
where  
the last equality is due to $\|Z_{\ell}^i\|_{\rm F}=\rO(1)$ assumed in ({\bf P2}) and 
$\|\Deltap \txl\|=\rO(\tmu_{\ell})$ from \eqref{eq:mousukoshi2}.

By the twice continuous differentiability and Taylor's expansion of the function $G$, it holds that 
\begin{align}
&\|G(\tilde{x}^{\ell}+s_{\ell}\Deltap \tilde{x}^{\ell})(\tY_{\ell}+s_{\ell}\Deltap \tY_{\ell})-\mu(s_{\ell})I\|_{\rm F}\notag\\
=&\|(G_{\ell}+s_{\ell}\Deltap G_{\ell}+R_{\ell}(s_{\ell}\Deltap \tilde{x}^{\ell}))(\tY_{\ell}+s_{\ell}\Deltap \tY_{\ell})-\mu(s_{\ell})I\|_{\rm F}
\notag
\\
\le&\|G_{\ell}\tY_{\ell}+s_{\ell}
(\Deltap G_{\ell}\tY_{\ell}+G_{\ell}\Deltap\tYl)+s_{\ell}^2\Deltap G_{\ell}\Deltap \tY_{\ell}- \mu(s_{\ell})I\|_{\rm F} +\rO(s_{\ell}^2\tmu_{\ell}^2),\label{al:0614-1}
\end{align}
where we used \eqref{eq:0701-1} and $\|\tYl+s_{\ell}\Deltap\tYl\|_{\rm F}=\rO(1)$, which can be ensured by using $s_{\ell}\in (0,1]$, $\lim_{\ell\to \infty}\tY_{\ell} = Y_{\ast}$, and $\|\Deltap \tYl\|_{\rm F}=\rO(\tmul)$ from \eqref{eq:mousukoshi2}.
We have $\|G_{\ell}\tY_{\ell}-\tmu_{\ell} I\|_{\rm F}=\rO(\tmu_{\ell}^{1+\xi})$ from $\twl\in\N_{\tmul}^{\tau\tmul^{1+\xi}}$ and, by $\hU_{\ell}^{\rm t}:=\mathcal{L}_{\hGl^{\fr}}^{-1}(\Deltap \hGl)$ for each $\ell$, 
\begin{align}
&
\|G_{\ell}\tY_{\ell}+s_{\ell}
(\Deltap G_{\ell}\tY_{\ell}+G_{\ell}\Deltap\tYl)+s_{\ell}^2\Deltap G_{\ell}\Deltap \tY_{\ell}- \mu(s_{\ell})I\|_{\rm F}\notag\\ 
=&\|G_{\ell}\tY_{\ell}+s_{\ell}^2\Deltap G_{\ell}\Deltap \tY_{\ell}- \mu(s_{\ell}) I \notag\\
                   &\hspace{3em}+
\underbrace{s_{\ell}
\tP_{\ell}^{-1}(\hU_{\ell}^{\rm t}\hG_{\ell}^{\fr}+\hG_{\ell}^{\fr}\hU_{\ell}^{\rm t})\tP_{\ell}^{-\top}\tY_{\ell}}_{=s_{\ell}\Deltap G_{\ell}\tY_{\ell}\ \because 
\hU_{\ell}^{\rm t}=\mathcal{L}_{\hGl^{\fr}}^{-1}(\Deltap \hGl),\ \eqref{al:ast3}}
+\underbrace{
s_{\ell}(
-\tmu_{\ell} I-\tP_{\ell}^{-1}\hGl^{\fr}\hU_{\ell}^{\rm t} \tP_{\ell}^{-\top}\tY_{\ell}-
G_{\ell}\tY_{\ell}\tP_{\ell}^{-1}\hU_{\ell}^{\rm t}\hG_{\ell}^{-\fr}\tP_{\ell}
)}_{=s_{\ell}G_{\ell}\Deltap\tY_{\ell}\ \because\ \mbox{the first line of }\eqref{al:scalednewton-2} 
}
\|_{\rm F}\notag\\
=&\|
G_{\ell}\tY_{\ell}+s_{\ell}^2\Deltap G_{\ell}\Deltap \tY_{\ell}-\mu(s_{\ell}) I+s_{\ell}\left(
-\tmu_{\ell} I + \tP_{\ell}^{-1}\hU_{\ell}^{\rm t}\hG_{\ell}^{\fr}\tP_{\ell}^{-\top}\tY_{\ell}-G_{\ell}\tY_{\ell}\tP_{\ell}^{-1}\hU_{\ell}^{\rm t}\hG_{\ell}^{-\fr}\tP_{\ell}
\right)
\|_{\rm F}\notag \\
=& \left\|G_{\ell}\tY_{\ell}-\tmu_{\ell} I + s_{\ell}^2\Deltap G_{\ell}\Deltap \tY_{\ell}+s_{\ell}\left(V_{\tP_{\ell}}^{\rm t}(G_{\ell}\tY_{\ell}-\tmu_{\ell} I)-(G_{\ell}\tY_{\ell}-\tmu_{\ell} I)V_{\tP_{\ell}}^{\rm t}\right)\right\|_{\rm F}\notag \\
\le&\|G_{\ell}\tY_{\ell}-\tmu_{\ell} I\|_{\rm F} + s_{\ell}^2\|\Deltap G_{\ell}\|_{\rm F}\|\Deltap \tY_{\ell}\|_{\rm F}+ 2s_{\ell}\|V^{\rm t}_{\tP_{\ell}}\|\|G_{\ell}\tY_{\ell}-\tmu_{\ell} I\|_{\rm F}\notag \\
=&\rO(\tmu_{\ell}^{1+\xi}), \label{al:0614-2}
\end{align}
where the last equality follows from 
\eqref{al:1}, 
$\|G_{\ell}\tY_{\ell}-\tmu_{\ell} I\|_{\rm F}=\rO(\tmu_{\ell}^{1+\xi})$, $\{s_{\ell}\}\subseteq (0,1]$, and $0<\xi<1$ for each $\ell$.
By noting $\{s_{\ell}\}\subseteq (0,1]$ and $0<\xi<1$ again,
equations\,\eqref{al:0614-1} and \eqref{al:0614-2} yield
$$
\|G(\tilde{x}^{\ell}+s_{\ell}\Deltap \tilde{x}^{\ell})(\tY_{\ell}+s_{\ell}\Deltap \tY_{\ell})-\mu(s_{\ell})I\|_{\rm F}= \rO(\tmu_{\ell}^{1+\xi}).
$$

We next show item\,\eqref{prop:0614-1}.
To derive a contradiction, we assume to the contrary that there exists an {\it infinite} subsequence $\{\tw^{\ell}\}_{\ell\in \mathcal{L}}\subseteq \{\tw^{\ell}\}$
such that $G(\tilde{x}^{\ell}+(1-\tmu_{\ell}^{\alpha})\Deltap \tilde{x}^{\ell})\in S^m_{++}$ and $\tY_{\ell}+(1-\tmu_{\ell}^{\alpha})\Deltap \tY_{\ell}\in S^m_{++}$ do not hold for each $\ell\in \mathcal{L}$.
Then, by noting $\{(\Gl,\tY_{\ell})\}_{\ell\in \mathcal{L}}\subseteq S^m_{++}\times S^m_{++}$,  there exists some sequence $\{\hat{t}_{\ell}\}_{\ell\in \mathcal{L}}\subseteq (0,1]$ such that, for each $\ell\in \mathcal{L}$, 
\begin{equation}
G(\tilde{x}^{\ell}+\hat{t}_{\ell}(1-\tmu_{\ell}^{\alpha})\Deltap \tilde{x}^{\ell})\in S^m_{+},\ \tY_{\ell}+\hat{t}_{\ell}(1-\tmu_{\ell}^{\alpha})\Deltap \tY_{\ell}\in S^m_+
\label{eq:20200812-1}
\end{equation}
hold and, moreover, one of the above lies on the boundary of $S^m_+$, namely, 
\begin{equation}
G(\tilde{x}^{\ell}+\hat{t}_{\ell}(1-\tmu_{\ell}^{\alpha})\Deltap \tilde{x}^{\ell})\in S^m_{+}\setminus S^m_{++}\ \mbox{or }\tY_{\ell}+\hat{t}_{\ell}(1-\tmu_{\ell}^{\alpha})\Deltap \tY_{\ell}\in S^m_{+}\setminus S^m_{++}
\label{eq:20200812-2}
\end{equation}
holds.
Denote
\begin{align*}
A_{\ell,1}&:=G\left(\tilde{x}^{\ell}+\hat{t}_{\ell}(1-\tmu_{\ell}^{\alpha})\Deltap \tilde{x}^{\ell}\right)^{\fr}\left(\tY_{\ell}+\hat{t}_{\ell}(1-\tmu_{\ell}^{\alpha})\Deltap \tY_{\ell}\right)G\left(\tilde{x}^{\ell}+\hat{t}_{\ell}(1-\tmu_{\ell}^{\alpha})\Deltap \tilde{x}^{\ell}\right)^{\fr},\\
A_{\ell,2}&:=G\left(\tilde{x}^{\ell}+\hat{t}_{\ell}(1-\tmu_{\ell}^{\alpha})\Deltap \tilde{x}^{\ell}\right)\left(\tY_{\ell}+\hat{t}_{\ell}(1-\tmu_{\ell}^{\alpha})\Deltap \tY_{\ell}\right)
\end{align*}
for each $\ell$.
Expressions\,\eqref{eq:20200812-1} and \eqref{eq:20200812-2}
yield, for each $\ell\in \mathcal{L}$,
\begin{equation}
\lambda_{\min}\left(A_{\ell,1}\right)=0.\label{lamW}
\end{equation}
Proposition\,\cref{prop:xyx} and item\,\eqref{prop:0614-2}, which we have just shown above, with $\{s_{\ell}\}$ replaced by $\{\hat{t}_{\ell}(1-\tmu_{\ell}^{\alpha})\}_{\ell\in \mathcal{L}}$ imply
\begin{equation}
\|A_{\ell,1}-(1-\hat{t}_{\ell}(1-\tmu_{\ell}^{\alpha}))\tmu_{\ell}I\|_{\rm F}\le 
\|A_{\ell,2}-(1-\hat{t}_{\ell}(1-\tmu_{\ell}^{\alpha}))\tmu_{\ell}I\|_{\rm F}=\rO(\tmu_{\ell}^{1+\xi}).\label{eq:0628-3}
\end{equation}
On the other hand, by recalling that, for any $A\in S^m$, $\|A\|_{\rm F}^2$ is equal to the summation of squares of the eigenvalues of $A$, 
it holds that
$$
\|A_{\ell,1}-(1-\hat{t}_{\ell}(1-\tmu_{\ell}^{\alpha}))\tmu_{\ell}I\|_{\rm F}\ge\left|
\lambda_{\min}(A_{\ell,1})-(1-\hat{t}_{\ell}(1-\tmu_{\ell}^{\alpha}))\tmu_{\ell})\right|=
(1-\hat{t}_{\ell}(1-\tmu_{\ell}^{\alpha}))\tmu_{\ell},
$$
where the last equality follows from \eqref{lamW} and $0<\widehat{t}_{\ell}\le 1$.
Combining this inequality with \eqref{eq:0628-3} yields $(1-\widehat{t}_{\ell}(1-\tmu_{\ell}^{\alpha}))\tmu_{\ell}=\rO(\tmu_{\ell}^{1+\xi})$, which implies 
the boundedness of 
$\{|\tmu_{\ell}^{-\xi}-\hat{t}_{\ell}(\tmu_{\ell}^{-\xi}-\tmu_{\ell}^{\alpha-\xi})|\}_{\ell\in \mathcal{L}}$.
However,  since $\tmu_{\ell}\to 0$ as $\ell\in \mathcal{L}\to \infty$,
we have $0<\tmu_{\ell}^{\alpha}<1$ for $\ell$ sufficiently large, thus by $0<\alpha<\xi<1$ from \eqref{al:cond} and $\widehat{t}_{\ell}\le 1$, we obtain 
\begin{align}
|\tmu_{\ell}^{-\xi}-\hat{t}_{\ell}(\tmu_{\ell}^{-\xi}-\tmu_{\ell}^{\alpha-\xi})|&=
(1-\hat{t}_{\ell})\tmu_{\ell}^{-\xi} + \hat{t}_{\ell}\tmu_{\ell}^{\alpha-\xi}\notag \\
&\ge (1-\hat{t}_{\ell})\tmu_{\ell}^{\alpha-\xi} + \hat{t}_{\ell}\tmu_{\ell}^{\alpha-\xi}\notag \\
&=\tmu_{\ell}^{\alpha-\xi}\rightarrow \infty\ \left(\ell\left(\in \mathcal{L}\right)\to \infty\right),\notag 
\end{align}
which contradicts the boundedness of
$\{|\tmu_{\ell}^{-\xi}-\hat{t}_{\ell}(\tmu_{\ell}^{-\xi}-\tmu_{\ell}^{\alpha-\xi})|\}_{\ell\in \mathcal{L}}$.
Therefore, we conclude that \eqref{eq:1517} eventually holds.

Lastly, we prove item\,\eqref{prop:0614-3}.
Since $1+\xi^{\prime}<\frac{1+\xi}{1+\alpha}$ by condition\,\eqref{al:cond}, it holds that
\begin{equation}
\tmu_{\ell}^{1+\xi}=\ro(\tmu_{\ell+\fr}^{1+\xi^{\prime}}).\label{eq:miss}
\end{equation}
Let us first show
\begin{equation}
\|h(\txt^{\ell+\fr})\|_2=\ro(\tmu_{\ell+\fr}^{1+\xi^{\prime}}). \label{eq:2257-2}
\end{equation}
To this end, note that 
\begin{align}
\|h(\txt^{\ell+\fr})\|_2
=&\|h(\tilde{x}^{\ell})+(1-\tmu_{\ell}^{\alpha})\nabla h(\tilde{x}^{\ell})^{\top}\Deltap \tilde{x}^{\ell}\|_2 + \rO((1-\tmu_{\ell}^{\alpha})^2\|\Deltap\tilde{x}^{\ell}\|_2^2) \notag\\
=&\rO\left(\tmu_{\ell}^{1+\xi}+(1-\tmu_{\ell}^{\alpha})^2\tmu_{\ell}^{2}\right),\label{al:0629-1}
\end{align}
where
the first equality follows from Proposition\,\cref{prop:order} with $F$ replaced by $h$ and the second one owes to the facts that 
$\nabla h(\tilde{x}^{\ell})^{\top}\Deltap \tilde{x}^{\ell}=0$ (see the first line of \eqref{al:scalednewton-4}), $\|h(\tilde{x}^{\ell})\|_2=\rO(\tmul^{1+\xi})$ from $\twl\in\N_{\tmul}^{\tau\tmul^{1+\xi}}$, and $\|\Deltap \tilde{x}^{\ell}\|_2=\rO(\tmu_{\ell})$ from \eqref{eq:mousukoshi2}.
We finally conclude \eqref{eq:2257-2} from \eqref{al:0629-1}, $0<\xi^{\prime}<1$, and \eqref{eq:miss}.
Next, we will prove
\begin{equation}
\|\nabla_xL(\twt^{\ell+\fr})\|_2=\ro(\tmu_{\ell+\fr}^{1+\xi^{\prime}}).\label{eq:0629-2}
\end{equation}
Note that 
$\twt^{\ell+\fr}=\tw^{\ell}+(1-\tmu^{\alpha})\Deltap\tw^{\ell}$ and 
the Lagrangian $L$ is three times differentiable by assumption.
Noting $\Deltap \tw^{\ell}=\rO(\tmu_{\ell})$ from \eqref{eq:mousukoshi2} and using Proposition\,\cref{prop:order} with $F$ replaced by $\nabla_x L$, we obtain
\begin{align}
  \|\nabla_xL(\twt^{\ell+\fr})\|_2=&\left\|\nabla_xL(\tw^{\ell})+(1-\tmu_{\ell}^{\alpha})
\left(\nabla_{xx}^2L(\tw^{\ell})\Deltap \tilde{x}^{\ell}
-\mathcal{J}G(\tilde{x}^{\ell})^{\ast}\Deltap \tY_{\ell}+\nabla h(\tilde{x}^{\ell})^{\top}\Deltap \tz^{\ell}\right)\right\|_2 \notag\\
&\hspace{5em}+\rO((1-\tmu_{\ell}^{\alpha})^2\|\Deltap \tw^{\ell}\|^2)\notag\\
=&\|\nabla_xL(\tw^{\ell})\|_2+
\rO((1-\tmu_{\ell}^{\alpha})^2\|\Deltap \tw^{\ell}\|^2)\notag\\
=&\rO\left(\tmu_{\ell}^{1+\xi}+(1-\tmu_{\ell}^{\alpha})^2\tmu_{\ell}^2\right),\label{al:0629-3}
\end{align}
where the second equality follows from the first line of \eqref{al:scalednewton-1} with $\Delta w = \Deltap \tw^{\ell}$ and $w=\tw^{\ell}$ 
and the last equality is due to
$\|\nabla_xL(\tw^{\ell})\|_2=\rO(\tmul^{1+\xi})$ from $\tw^{\ell}\in \N_{\tmul}^{\tau\tmul^{1+\xi}}$.
By $0<\xi^{\prime}<1$ and \eqref{eq:miss}, \eqref{al:0629-3} implies the desired relation\,\eqref{eq:0629-2}.
By setting $s_{\ell}=1-\tmul^{\alpha}$ in item~\eqref{prop:0614-2} for any $l$ sufficiently large and using \eqref{eq:miss} again,
we have $\left\|G(\tx^{\ell}+(1-\tmul^{\alpha})\Deltap \txl)\left(\tYl+(1-\tmul^{\alpha})\Deltap\tYl\right)-\tmu_{\ell}^{1+\alpha}I\right\|_{\rm F}=\ro(\tmu_{\ell+\fr}^{1+\xi^{\prime}})$.
Combined with this fact and item~\eqref{prop:0614-1}, expressions \eqref{eq:2257-2} and \eqref{eq:0629-2} readily imply item~\eqref{prop:0614-3}. The proof is complete.
$\hfill \Box$

We next prove Proposition\,\cref{prop:0626}. The flow of the proof is actually similar to that of Proposition\,\cref{prop:0614}.
\subsubsection*{Proof of Proposition\,\cref{prop:0626}}
We make frequent use the following facts derived from
Proposition\,\cref{prop:0613-2}\,\eqref{e2}:
\begin{equation}
\|\Deltac \tilde{x}^{\ell}\|_2=\rO(\tmu_{\ell}^{1+\xi}),\ \|\Deltac \tY_{\ell}\|_{\rm F}=\rO(\tmu_{\ell}^{1+\xi}).
\label{eq:mousukoshi}
\end{equation}
For each $\ell$, let
$\Deltac \tx^{\ell}_i\in \R$ be the $i$-th element of $\Deltac \tx^{\ell}$,
$\Deltac\Gl:=\sum_{i=1}^n\Deltac \tx^{\ell}_i\Gi^{\ell}(\tx^{\ell})$, and 
$\Deltac\hGl:=\sum_{i=1}^n\Deltac \tx^{\ell}_i\hGi^{\ell}(\tx^{\ell})$.
We first show item~\eqref{prop:0626-2}.
By letting
$V^{\rm c}_{\tP_{\ell}}:=\tP_{\ell}^{-1}\mathcal{L}_{\hG_{\ell}^{\fr}}^{-1}(\Deltac \hG_{\ell})\hG_{\ell}^{-\fr}\tP_{\ell}$,
we have
\begin{align}
\|V^{\rm c}_{\tP_{\ell}}\|_{\rm F}             
              \le \frac{\|\Deltac \tilde{x}^{\ell}\|_2}{\tmu_{\ell}} \sum_{i=1}^n\tmu_{\ell}\|\tP^{-1}_{\ell}
\mathcal{L}_{\hG_{\ell}^{\fr}}^{-1}(\hGi^{\ell})\hG^{-\fr}_{\ell}\tP_{\ell}\|_{\rm F}
               =\rO(\tmu_{\ell}^{\xi}),\label{al:vcp}
\end{align}
where the last equality follows from \eqref{eq:mousukoshi} and the boundedness of $\{\zli\}$ assumed in ({\bf P2}).
Writing $\hU_{\ell}^{\rm c}:=\mathcal{L}_{\hGl^{\fr}}^{-1}(\Deltac \hGl)$ and 
choosing $\{s_{\ell}\}\subseteq (0,1]$ arbitrarily, we obtain 
\begin{align}
&\|
G_{\ell}\tY_{\ell}+s_{\ell}\Deltac G_{\ell}\tY_{\ell}+ s_{\ell}G_{\ell}\Deltac \tY_{\ell}-\tmu_{\ell} I
\|_{\rm F}\notag\\
&=\|
G_{\ell}\tY_{\ell}-\tmu_{\ell} I +\underbrace{s_{\ell}
\tP_{\ell}^{-1}\left(\hU_{\ell}^{\rm c}\hG_{\ell}^{\fr}+\hG_{\ell}^{\fr}\hU_{\ell}^{\rm c}\right)\tP_{\ell}^{-\top}\tY_{\ell}
}_{=s_{\ell}\Deltac{G}_{\ell}\tY_{\ell}\ \because\
\Deltac{G}_{\ell}=\tP_{\ell}^{-1}\Deltac{\hG}_{\ell}
\tP_{\ell}^{-\top},\ \hG_{\ell}^{\fr}\hU_{\ell}^{\rm c} + \hU_{\ell}^{\rm c}\hG_{\ell}^{\fr}=\Deltac\hG_{\ell}
}\notag\\
&\hspace{3em}+
\underbrace{s_{\ell}\left(
-G_{\ell}\tY_{\ell}+\tmu_{\ell} I-\tP_{\ell}^{-1}\hG_{\ell}^{\fr}\hU_{\ell}^{\rm c}\tP_{\ell}^{-\top}\tY_{\ell}-G_{\ell}\tY_{\ell}\tP_{\ell}^{-1}\hU_{\ell}^{\rm c}\hG_{\ell}^{-\fr}\tP_{\ell}\right)}_{=s_{\ell}G_{\ell}\Deltac\tY_{\ell}\ \because\ \mbox{the second line of }\eqref{al:scalednewton-2}}\|_{\rm F}\notag \\
&\le (1-s_{\ell})\|
G_{\ell}\tY_{\ell}-\tmu_{\ell} I\|_{\rm F}
+s_{\ell}\|\tP_{\ell}^{-1}\hU_{\ell}^{\rm c}\hG_{\ell}^{\fr}\tP_{\ell}^{-\top}\tY_{\ell}-G_{\ell}\tY_{\ell}\tP_{\ell}^{-1}\hU_{\ell}^{\rm c}\hG_{\ell}^{-\fr}\tP_{\ell}
\|_{\rm F}\notag\\
&=(1-s_{\ell})
\|
G_{\ell}\tY_{\ell}-\tmu_{\ell} I\|_{\rm F}
+s_{\ell}\|V_{\tP_{\ell}}^{\rm c}(G_{\ell}\tY_{\ell}-\tmu_{\ell} I)-(G_{\ell}\tY_{\ell}-\tmu_{\ell} I)V_{\tP_{\ell}}^{\rm c}\|_{\rm F}\notag \\
&=(1-s_{\ell})
\|
G_{\ell}\tY_{\ell}-\tmu_{\ell} I\|_{\rm F}
+2s_{\ell}\|V_{\tP_{\ell}}^{\rm c}\|_{\rm F}\|G_{\ell}\tY_{\ell}-\tmu_{\ell} I\|_{\rm F}\notag\\
&=\rO\left((1-s_{\ell})\tmu_{\ell}^{1+\xi}+2s_{\ell}\tmu_{\ell}^{1+2\xi}\right),\label{al:0629-4}
\end{align}
where the last equality follows from \eqref{al:vcp} and $\|\Gl\tYl-\tmul I\|_{\rm F}\le \tau\tmul^{1+\xi}$ by $\twl\in \N_{\tmul}^{\tau\tmul^{1+\xi}}$.        
Note \eqref{eq:mousukoshi}.
Then, in a manner similar to the proof for Proposition\,\cref{prop:0614}\eqref{prop:0614-2}, by applying Taylor's expansion to $G(\tilde{x}^{\ell}+s_{\ell}\Deltac \tilde{x}^{\ell})$ and also by using Proposition\,\cref{prop:order} and \eqref{al:0629-4}, we can derive 
\begin{align}
\|G(\tilde{x}^{\ell}+s_{\ell}\Deltac \tilde{x}^{\ell})(\tY_{\ell}+s_{\ell}\Deltac \tY_{\ell})-\tmu_{\ell} I\|_{\rm F}
=\rO\left((1-s_{\ell})\tmu_{\ell}^{1+\xi}+2s_{\ell}\tmu_{\ell}^{1+2\xi}\right).
\notag
\end{align}
Hence, we have item~\eqref{prop:0626-2}.

We next show item~\eqref{prop:0626-1}.
For contradiction, suppose to the contrary.
That is, we suppose that there exists an {\it infinite} subsequence $\{\tw^{\ell}\}_{\ell\in \mathcal{L}}\subseteq \{\tw^{\ell}\}$
such that $G(\tilde{x}^{\ell}+\Deltac \tilde{x}^{\ell})\in S^m_{++}$ and $\tY_{\ell}+\Deltac \tY_{\ell}\in S^m_{++}$ do not hold for each $\ell\in \mathcal{L}$.
Then, by noting $\{(G(\tilde{x}^{\ell}),\tY_{\ell})\}\subseteq S^m_{++}\times S^m_{++}$,  there exists some sequence $\{\hat{t}_{\ell}\}_{\ell\in \mathcal{L}}\subseteq (0,1]$ such that, for each $\ell\in \mathcal{L}$, 
\begin{equation*}
G(\tilde{x}^{\ell}+\hat{t}_{\ell}\Deltac \tilde{x}^{\ell})\in S^m_{+},\ \tY_{\ell}+\hat{t}_{\ell}\Deltac \tY_{\ell}\in S^m_+
\end{equation*}
hold and, moreover,  
\begin{equation}\label{eq:4.37}
G(\tilde{x}^{\ell}+\hat{t}_{\ell}\Deltac \tilde{x}^{\ell})\in S^m_{+}\setminus S^m_{++}\ \mbox{or }\tY_{\ell}+\hat{t}_{\ell}\Deltac \tY_{\ell}\in S^m_{+}\setminus S^m_{++}
\end{equation}
holds.
Denoting 
\begin{align*}
B_{\ell,1}&:=G(\tilde{x}^{\ell}+\hat{t}_{\ell}\Deltac \tilde{x}^{\ell})^{\fr}\left(\tY_{\ell}+\hat{t}_{\ell}\Deltac \tY_{\ell}\right)G(\tilde{x}^{\ell}+\hat{t}_{\ell}\Deltac \tilde{x}^{\ell})^{\fr},\\
B_{\ell,2}&:=G(\tilde{x}^{\ell}+\hat{t}_{\ell}\Deltac \tilde{x}^{\ell})\left(\tY_{\ell}+\hat{t}_{\ell}\Deltac \tY_{\ell}\right)
\end{align*}
for each $\ell$, we obtain from \eqref{eq:4.37} that
\begin{equation}
\lambda_{\min}\left(B_{\ell,1}\right)=0\label{lamW2}.
\end{equation}
Proposition\,\cref{prop:xyx} and the above item~\eqref{prop:0626-2} with $\{s_{\ell}\}$ replaced by $\{\tl\}_{\ell\in \mathcal{L}}$ imply
\begin{equation}
\|B_{\ell,1}-\tmu_{\ell}I\|_{\rm F}\le 
\|B_{\ell,2}-\tmu_{\ell}I\|_{\rm F}=\rO((1-\tl)\tmu_{\ell}^{1+\xi}+2\tl \tmu_{\ell}^{1+2\xi}).\label{eq:0629-3}
\end{equation}
On the other hand, the symmetry of $B_{\ell,1}$ and \eqref{lamW2} derive
$$
\|B_{\ell,1}-\tmu_{\ell}I\|_{\rm F}\ge \left|\lambda_{\min}(B_{\ell,1})-\tmu_{\ell}\right|=\tmu_{\ell}.
$$
Combining this inequality with \eqref{eq:0629-3} yields $\tmu_{\ell}=O((1-\tl)\tmu_{\ell}^{1+\xi}+2\tl \tmu_{\ell}^{1+2\xi})$, which further implies boundedness of 
$$
\left
\{\left|\frac{\tmu_{\ell}}{(1-\tl)\tmu_{\ell}^{1+\xi}+2\tl\tmu_{\ell}^{1+2\xi}}\right|
\right\}
=
\left
\{\left|\frac{1}{(1-\tl)\tmu_{\ell}^{\xi}+2\tl\tmu_{\ell}^{2\xi}}\right|
\right\}.
$$
However, by $0<\xi<1$ and
$\tmu_{\ell}\to 0$ as $\ell(\in \mathcal{L})\to \infty$, we see
\begin{equation*}
\left|\frac{1}{(1-\tl)\tmu_{\ell}^{\xi}+2t_{\ell}\tmu_{\ell}^{2\xi}}\right|
=\frac{1}{
\left|
\tmu_{\ell}^{\xi}+t_{\ell}(2\tmu_{\ell}^{2\xi}-\tmu_{\ell}^{\xi})
\right|
}\rightarrow \infty\ (\ell(\in \mathcal{L})\to\infty),
\end{equation*}
from which we have a contradiction.
Consequently, item~\eqref{prop:0614-1} holds.

Lastly, we prove item~\eqref{prop:0626-3}.
Noting $\txc^{\ell+\fr}=\tilde{x}^{\ell}+\Deltac \tilde{x}^{\ell}$ from the definition of $\widetilde{w}^{\ell+\fr}_c$
and using Proposition\,\cref{prop:order} with $F$ replaced by $h$ again, we obtain
\begin{align}
\|h(\txc^{\ell+\fr})\|_2
=&\|h(\tilde{x}^{\ell})+\nabla h(\tilde{x}^{\ell})^{\top}\Deltac \tilde{x}^{\ell}\|_2+\rO(\|\Deltac \tilde{x}^{\ell}\|_2^2)\notag\\
=&\ro(\tmu_{\ell}^{1+2\xi}),\label{al:0319-1}
\end{align}
where the second equality follows from the fact that $h(\tilde{x}^{\ell})+\nabla h(\tilde{x}^{\ell})^{\top}\Deltac \tilde{x}^{\ell}=0$ (see the second-line of \eqref{al:scalednewton-4})
and $\|\Deltac \tilde{x}^{\ell}\|_2=\rO(\tmu_{\ell}^{1+\xi})$ from \eqref{eq:mousukoshi}.
Similarly, we can derive  
\begin{equation}
\|\nabla_xL(\twc^{\ell+\fr})\|_2=\rO(\tmu_{\ell}^{2(1+\xi)})=\ro(\tmu_{\ell}^{1+2\xi}).\label{eq:0629-4}
\end{equation}
By setting $s_{\ell}=1$ for $\ell$ sufficiently large in item~\eqref{prop:0626-2} of this proposition, we have
\begin{equation}
\|G(\txl+\Deltac\txl)(\tYl+\Deltac \tYl)-\tmul I\|_{\rm F}=\rO(\tmul^{1+2\xi}).\label{eq:20200817-1}
\end{equation}
Since $\tmul^{1+2\xi}=\ro(\tmu_{\ell}^{1+2\kappa})$ holds by $\xi>\kappa>0$, 
\eqref{al:0319-1}--\eqref{eq:20200817-1} along with item~\eqref{prop:0626-1} derive the desired conclusion.
The proof is complete.
$\hfill \Box$

\subsection{Proof of Theorem\,\cref{thm:main}}\label{sec:last}
In this section, we give a proof of our main result, Theorem\,\cref{thm:main}. 
To this end, we first show the following lemma by invoking Proposition\,\cref{prop:0614} and Proposition\,\cref{prop:0626}.
We denote $\mathcal{B}_{r}(w):=\{v\in \W\mid \|v-w\|\le r\}$ for $r>0$ and $w\in \W$.
\begin{lemma}\label{thm:0710-1}
Choose $\delta\in (0,1)$ arbitrarily. Let $\P:\mathcal{W}_{++}\to \R^{m\times m}$
be a function satisfying conditions~{\rm({\bf P1})} and {\rm({\bf P2})}.
In addition, let $\xi$ and $\tau>0$ be the constants in condition~{\rm ({\bf P2})} and $\alpha$ and $\xi^{\prime}$ be arbitrary constants satisfying \eqref{al:cond}.
Then, there exists some $\overline{u}>0$ such that,
for any $\mu\in (0,\overline{u}]$,
the following properties hold if $\olw\in \mathcal{B}_{\mu^{\delta}}(w^{\ast})\cap \N_{\mu}^{\tau\mu^{1+\xi}}$.: The linear equation~\eqref{eq:pre3} with $P=\P(\olw)$ has a unique solution $\Deltap w$ and
it holds that 
\begin{align}
\olw_{\fr}:=\olw+(1-\mu^{\alpha})\Deltap w\in \mathcal{B}_{\mu_+^{\delta}}(w^{\ast})\cap \N_{\mu_+}^{\tau\mu_+^{1+\xi^{\prime}}},\label{al:0705-1}
\end{align}
where $\mu_+:=\mu^{1+\alpha}$.
Moreover, the linear equation~\eqref{eq:corr4} with $P=\P(\olw_{\fr})$ has a unique solution $\Deltac w$ and it holds that
\begin{align}
\olw_{\fr}+\Deltac w\in \mathcal{B}_{\mu_+^{\delta}}(w^{\ast})\cap \N_{\mu_+}^{\tau\mu_+^{1+\xi}}.\label{al:0705-2}
\end{align}
\end{lemma}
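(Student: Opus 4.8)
The plan is to reduce this "single-step" statement to the asymptotic results \cref{prop:0614} and \cref{prop:0626}, which were proved for arbitrary sequences satisfying \eqref{eq:P2}, by a standard contradiction/compactness argument. First I would suppose the claim fails: then for every $\overline{u}>0$ there is some admissible pair. Taking $\overline{u}=1/\ell$, I obtain sequences $\{\tmu_\ell\}\subseteq(0,1/\ell]$ and $\{\olw^{\ell}\}$ with $\olw^{\ell}\in\mathcal{B}_{\tmu_\ell^{\delta}}(w^{\ast})\cap\N_{\tmu_\ell}^{\tau\tmu_\ell^{1+\xi}}$ for which at least one of the conclusions of the lemma fails. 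Setting $r_\ell:=\tau\tmu_\ell^{1+\xi}$, $\tw^{\ell}:=\olw^{\ell}$, and $\tP_\ell:=\P(\tw^{\ell})$, these sequences satisfy exactly hypothesis \eqref{eq:P2}: indeed $\tmu_\ell\to 0$, $\tw^{\ell}\in\N_{\tmu_\ell}^{r_\ell}$ by construction, $r_\ell=\tau\tmu_\ell^{1+\xi}$, and $\|\tw^{\ell}-w^{\ast}\|\le\tmu_\ell^{\delta}\to 0$ gives $\tw^{\ell}\to w^{\ast}$. Hence all of \cref{prop:0613-2}, \cref{prop:0614}, and \cref{prop:0626} apply to these sequences.

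Next I would read off each conclusion of the lemma from those propositions. Unique solvability of \eqref{eq:pre3} and \eqref{eq:corr4} with $P=\tP_\ell$ for large $\ell$ is \cref{prop:0613-2}. For the membership claims, with $\alpha,\xi'$ as in \eqref{al:cond}, \cref{prop:0614}(item~\ref{prop:0614-3}) gives $\olw^{\ell}_{\fr}:=\tw^{\ell}+(1-\tmu_\ell^{\alpha})\Deltap\tw^{\ell}\in\N_{\tmu_{\ell+\fr}}^{\tau\tmu_{\ell+\fr}^{1+\xi'}}$ with $\tmu_{\ell+\fr}=\tmu_\ell^{1+\alpha}=\mu_+$; and \cref{prop:0626}(item~\ref{prop:0626-3}) with the choice $\kappa$ satisfying $\xi/2<\kappa<\xi$ (which exists) gives, applied to the shifted sequences, $\olw^{\ell}_{\fr}+\Deltac\olw^{\ell}_{\fr}\in\N_{\mu_+}^{\tau\mu_+^{1+2\kappa}}\subseteq\N_{\mu_+}^{\tau\mu_+^{1+\xi}}$ when $2\kappa\ge\xi$. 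The only genuinely new thing to check is that the neighborhood balls $\mathcal{B}_{\mu_+^{\delta}}(w^{\ast})$ are respected. This follows from the error bounds: $\|\Deltap\tw^{\ell}\|=O(\tmu_\ell)$ by \cref{prop:0613-2}(item~\ref{e1}), so
\begin{align*}
\|\olw^{\ell}_{\fr}-w^{\ast}\|\le\|\tw^{\ell}-w^{\ast}\|+\|\Deltap\tw^{\ell}\|=O(\tmu_\ell^{\delta})+O(\tmu_\ell)=O(\tmu_\ell^{\delta}),
\end{align*}
and since $\delta<1$ implies $\delta(1+\alpha)>\delta$, i.e. $\tmu_\ell^{\delta}=o(\tmu_\ell^{\delta(1+\alpha)})=o(\mu_+^{\delta})$, we get $\olw^{\ell}_{\fr}\in\mathcal{B}_{\mu_+^{\delta}}(w^{\ast})$ for $\ell$ large; the same estimate with $\|\Deltac\olw^{\ell}_{\fr}\|=O(\mu_+^{1+\xi})$ handles the second ball. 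Thus for all large $\ell$ every conclusion of the lemma holds for $\tw^{\ell}$, contradicting the choice of the failing sequence; this proves the lemma.

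The main obstacle, and the point requiring care, is a slightly subtle "self-referential" application of \cref{prop:0626}: that proposition assumes \eqref{eq:P2} for the sequence it is applied to, but I want to apply it at the intermediate points $\olw^{\ell}_{\fr}$ with barrier parameter $\mu_+=\tmu_\ell^{1+\alpha}$ — so I must first verify that $\{(\olw^{\ell}_{\fr},\mu_+)\}$ itself satisfies \eqref{eq:P2}, i.e. $\olw^{\ell}_{\fr}\to w^{\ast}$, $\mu_+\to 0$, and $\olw^{\ell}_{\fr}\in\N_{\mu_+}^{\tau\mu_+^{1+\xi}}$. The first two are immediate; the membership is precisely the content of \eqref{al:0705-1} with $\xi'$ in place of $\xi$, together with the inclusion $\N_{\mu_+}^{\tau\mu_+^{1+\xi'}}\supseteq\N_{\mu_+}^{\tau\mu_+^{1+\xi}}$ — but that inclusion goes the wrong way. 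The fix is to instead apply \cref{prop:0626} using exponent $\xi'$: re-examining its statement, items \ref{prop:0626-1}--\ref{prop:0626-3} only use $0<\xi<1$ and $\twl\in\N_{\tmul}^{\tau\tmul^{1+\xi}}$, so they hold verbatim with $\xi$ replaced by $\xi'$ throughout, and \eqref{al:cond} guarantees we may pick $\kappa$ with $\xi'/2<\kappa$ and $2\kappa\ge\xi$ (possible since $\xi'>\xi/2$). I would state this reduction explicitly and then conclude exactly as above.
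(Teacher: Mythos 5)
Your overall strategy --- reduce the single-step claim to the sequential Propositions \ref{prop:0613-2}, \ref{prop:0614}, \ref{prop:0626} via a contradiction argument, and separately patch the exponent mismatch ($\xi'$ versus $\xi$) when applying \cref{prop:0626} at the intermediate point --- is exactly the paper's route, and your discussion of the ``self-referential'' issue matches the paper's choice of $\xi''\in(\xi/2,\xi')$.

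However, the one step you single out as ``the only genuinely new thing to check'' --- membership in the balls $\mathcal{B}_{\mu_+^{\delta}}(w^{\ast})$ --- is where your argument breaks. You claim $\tmu_{\ell}^{\delta}=o(\tmu_{\ell}^{\delta(1+\alpha)})$ because $\delta(1+\alpha)>\delta$; this is backwards. Since $\tmu_{\ell}\to 0$ and $\delta(1+\alpha)>\delta$, the correct relation is $\tmu_{\ell}^{\delta(1+\alpha)}=o(\tmu_{\ell}^{\delta})$: the new ball radius $\mu_+^{\delta}=\tmu_{\ell}^{\delta(1+\alpha)}$ is \emph{much smaller} than the old one, so the triangle-inequality bound $\|\olw_{\fr}^{\ell}-w^{\ast}\|\le\|\tw^{\ell}-w^{\ast}\|+\|\Deltap\tw^{\ell}\|=O(\tmu_{\ell}^{\delta})$ is far too weak to place $\olw_{\fr}^{\ell}$ in $\mathcal{B}_{\mu_+^{\delta}}(w^{\ast})$. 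Even upgrading $\|\tw^{\ell}-w^{\ast}\|$ to $\Theta(\tmu_{\ell})$ via \cref{prop:0219-1} applied to $\tw^{\ell}$ only yields $O(\tmu_{\ell})$, which still fails when $\delta(1+\alpha)>1$ (e.g.\ $\delta=0.95$, $\alpha=0.1$). The paper's resolution is to apply the error bound of \cref{prop:0219-1} to the \emph{new} point with the \emph{new} barrier parameter: by \cref{prop:0614}(item~\ref{prop:0614-3}), $\olw_{\fr}^{\ell}\in\N_{\tmu_{\ell}^{1+\alpha}}^{\tau\tmu_{\ell}^{(1+\alpha)(1+\xi')}}$ and $\olw_{\fr}^{\ell}\to w^{\ast}$, whence $\|\olw_{\fr}^{\ell}-w^{\ast}\|=\Theta(\tmu_{\ell}^{1+\alpha})=o(\tmu_{\ell}^{\delta(1+\alpha)})=o(\mu_+^{\delta})$ because $1+\alpha>\delta(1+\alpha)$. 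The same device handles the second ball (your ``same estimate'' inherits the flaw as written). Without this sharper localization of the new iterate, the induction in \cref{thm:main} cannot close, so this is a genuine gap rather than a cosmetic slip.
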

\begin{proof}
We first show the unique solvability of equation~\eqref{eq:pre3} by contradiction.
Suppose to the contrary that there exist sequences $\{\tmu_{\ellem}\}\subseteq \R_{++}$ and $\{\tw^{\ellem}\}\subseteq \mathcal{W}_{++}$ such that 
\begin{equation}
\lim_{\ellem\to \infty}\tmu_{\ellem}=0,\ \ \tw^{\ellem}\in \mathcal{B}_{\tmu_{\ellem}^{\delta}}(w^{\ast})\cap \N_{\tmu_{\ellem}}^{\tau\tmu_{\ellem}^{1+\xi}}
\label{eq:0802-1}
\end{equation}
but equation~\eqref{eq:pre3} with $(w,\mu,P)=(\tw^{\ellem},\tmu_{\ellem},\P(\tw^{\ellem}))$
is not uniquely solvable for each $\ellem$. By the assumptions that $\tw^{\ellem}\in \mathcal{B}_{\tmu_{\ellem}^{\delta}}(w^{\ast})$ for each $\ellem$ and $\tmu_{\ellem}\to 0$ as $\ellem\to \infty$, we obtain $\lim_{\ellem\to \infty}\tw^{\ellem}=w^{\ast}$. Thus, from Proposition\,\cref{prop:0613-2},
equation\,\eqref{eq:pre3} must be uniquely solvable for any $\ellem$ sufficiently large. However, this is a contradiction and therefore the first assertion is ensured.

Next, we show \eqref{al:0705-1}. We derive a contradiction again by supposing the existence of sequences
$\{\tmu_{\ellem}\}\subseteq \R_{++}$ and $\{\tw^{\ellem}\}\subseteq \mathcal{W}_{++}$ such that \eqref{eq:0802-1} holds but \eqref{al:0705-1} with $(\olw,\mu,\mu_+)=(\tw^{\ellem},\tmu_{\ellem},\tmu_{\ellem}^{1+\alpha})$
is invalid for any $\ellem$, namely, 
\begin{equation}
\tw^{\ellem}+(1-\tmu_{\ellem}^{\alpha})\Deltap \tw^{\ellem}\notin \mathcal{B}_{\tmu_{\ellem}^{(1+\alpha)\delta}}(w^{\ast})\cap \N_{\tmu_{\ellem}^{1+\alpha}}^{\tau\tmu_{\ellem}^{(1+\xi^{\prime})(1+\alpha)}},\label{eq:0710-1}
\end{equation}
where $\Deltap \tw^{\ellem}$ is the unique solution of equation\,\eqref{eq:pre3} with $(w,\mu,P)=(\tw^{\ellem},\tmu_{\ellem},\mathcal{P}(\tw^{\ellem}))$.
Noting that $\lim_{\ellem\to\infty}\tw^{\ellem}=w^{\ast}$ as in the former argument
and using Proposition\,\cref{prop:0614}\eqref{prop:0614-3}, we have
\begin{equation}
\tw^{\ellem}+(1-\tmu_{\ellem}^{\alpha})\Deltap \tw^{\ellem}\in \N_{\tmu_{\ellem}^{1+\alpha}}^{\tau\tmu_{\ellem}^{(1+\alpha)(1+\xi^{\prime})}} \label{eq:0711-1}
\end{equation}
for any $\ellem$ sufficiently large.
Since $\Deltap \tw^{\ellem}\to 0$ by Proposition\,\cref{prop:0613-2}\eqref{e1} and
$w^{l}\in \N_{\tmu_{l}}^{\tau\tmu_{l}^{1+\xi}}$, $\lim_{\ellem\to\infty}\tw^{\ellem}=w^{\ast}$ implies 
\begin{equation}
\lim_{\ellem\to\infty}\tw^{\ellem}+(1-\tmu_{\ellem}^{\alpha})\Deltap \tw^{\ellem}=w^{\ast}.\label{eq:0711-2}
\end{equation}
In addition, \eqref{eq:0711-1} together with \eqref{eq:0710-1} implies 
$\tw^{\ellem}+(1-\tmu_{\ellem}^{\alpha})\Deltap \tw^{\ellem}\notin \mathcal{B}_{\tmu_{\ellem}^{\delta(1+\alpha)}}(w^{\ast})$ for any $\ellem$, from which we can derive  
\begin{align*}
\tmu_{\ellem}^{\delta(1+\alpha)}&<\|\tw^{\ellem}+(1-\tmu_{\ellem}^{\alpha})\Deltap \tw^{\ellem}-w^{\ast}\|
              =\rO(\tmu_{\ellem}^{1+\alpha})
              =\ro(\tmu_{\ellem}^{\delta(1+\alpha)}),\notag
\end{align*}
where the first equality follows from \eqref{eq:0711-2} and Proposition\,\cref{prop:0219-1} with
$\{\tw^{\ell}\}$, $\{\tmu_{\ell}\}$, and $\{r_{\ell}\}$ replaced by $\{\tw^{\ellem}+(1-\tmu_{\ellem}^{\alpha})\Deltap \tw^{\ellem}\}$, $\{\tmu_{\ellem}^{1+\alpha}\}$, and $\{\tau\tmu_{\ellem}^{(1+\alpha)(1+\xi^{\prime})}\}$, respectively.                      
Therefore, we obtain an obvious contradiction $\tmu_{l}^{\delta(1+\alpha)}=o(\tmu_{\ellem}^{\delta(1+\alpha)})$ leading us to the desired relation~\eqref{al:0705-1}. Consequently, we ensure the existence of some $u_1>0$ such that \eqref{al:0705-1} together with the unique solvability of equation\,\eqref{eq:pre3} holds for $\mu\in (0,u_1]$.

In turn, we prove the second-half claim.
The unique solvability of \eqref{eq:corr4} can be proved similarly to the above arguments by replacing $(\olw,\mu)$ with $(\olw^{\fr},\mu_+)$ satisfying \eqref{al:0705-1}. So, we omit it.
We show \eqref{al:0705-2}. 
Recall
$\xi^{\prime}>\frac{\xi}{2}$
from \eqref{al:cond} and choose $\xi^{\dprime}\in \R$ such that
\begin{equation}
\xi^{\prime}>\xi^{\dprime}>\frac{\xi}{2}.\label{eq:dprime}
\end{equation}
To prove \eqref{al:0705-2}, it actually suffices to show that, for a sufficiently small $\mu$, we have 
\begin{equation}
\olw_{\fr}+\Deltac w\in \mathcal{B}_{\mu_+^{\delta}}(w^{\ast})\cap \N_{\mu_+}^{\tau\mu_+^{1+2\xi^{\dprime}}}.\label{eq:0102}
\end{equation}
Indeed,
note that $\mu_+=\mu^{1+\alpha}<1$ by taking a sufficiently small $\mu$.
Hence, the assumption that $\xi^{\dprime}>\xi/2$ yields
$\mu_+^{1+2\xi^{\dprime}}<\mu_+^{1+\xi}$ entailing $\N_{\mu_+}^{\tau\mu_+^{1+2\xi^{\dprime}}}\subseteq \N_{\mu_+}^{\tau\mu_+^{1+\xi}}$. Therefore, this fact together with \eqref{eq:0102} derives \eqref{al:0705-2}.

For the sake of proving \eqref{eq:0102},
take $\mu\in (0,u_1]$ with $u_1$ determined above. Then, \eqref{al:0705-1} holds and equation\,\eqref{eq:corr4} is uniquely solvable.
Moreover, for a contradiction, suppose that there exist sequences $\{\tw^{\ellem+\fr}\}$ and $\{\tmu_{\ellem}\}$ 
such that, by writing $p_{\ellem}:=\tmu_{\ellem}^{1+\xi}$ for each $\ellem$, it holds that
$$
\lim_{\ellem\to\infty}\tmu_{\ellem}=0,\
\tw^{\ellem+\fr}+\Deltac \tw^{\ellem+\fr}\notin \mathcal{B}_{p_{\ellem}^{\delta}}(w^{\ast})\cap\N_{p_{\ellem}}^{\tau p_{\ellem}^{1+2\xi^{\dprime}}},\
\tw^{\ellem+\fr}\in \mathcal{B}_{p_{\ellem}^{\delta}}(w^{\ast})\cap \N_{p_{\ellem}}^{\tau p_{\ellem}^{1+\xi^{\prime}}},
$$
where $\Deltac \tw^{\ellem+\fr}$ denotes the unique solution of \eqref{eq:corr4} with $(w,\mu,P)=(\tw^{\ellem+\fr},p_{\ellem},\mathcal{P}(\tw^{\ellem+\fr}))$.
A contradiction can be then derived by the same argument as the one for \eqref{al:0705-1}, where, in place of Proposition\,\cref{prop:0614}\,\eqref{prop:0614-3} and Proposition\,\cref{prop:0613-2}\,\eqref{e1} ,
we used Proposition\,\cref{prop:0626}\,\eqref{prop:0626-3} with $\{(\tw^{\ell},\Deltac \twl,\tmu_{\ell},\kappa)\}$ replaced 
by $\{(\tw^{\ellem+\fr},\Deltac \tw^{\ellem+\fr},p_{\ellem},\xi^{\dprime})\}$ and also utilize Proposition\,\cref{prop:0613-2}\,\eqref{e2}
with $\{(\tw^{\ell},\Deltac \twl,\tmu_{\ell})\}$ replaced by $\{(\tw^{\ellem+\fr},\Deltac \tw^{\ellem+\fr},p_{\ellem})\}$. 
We thus conclude \eqref{eq:0102}, and ensure the existence of some $u_2>0$ such that \eqref{eq:0102} holds and the unique solvability of equation\,\eqref{eq:corr4} is valid for $\mu\in (0,u_2]$.  

Finally, by setting $\ol{u}:=\min(u_1,u_2)$, the whole proposition is proved.
\hfill
$\Box$
\end{proof}

We are now ready to prove our main theorem.
\subsubsection*{Proof of Theorem\,\cref{thm:main}}
{Choose $\delta\in (0,1)$ arbitrarily.
By taking $w^0\in \N_{\mu_0}^{\tau \mu_0^{1+\xi}}$ with $\mu_0=\gamma \|\Xi_0^I(w^0)\|$ sufficiently close to $w^{\ast}$, 
we obtain  
$\|w^0-w^{\ast}\|<\|w^0-w^{\ast}\|^{\frac{\delta}{2}}<\mu_0^{\delta}$, where
the first inequality follows from $0<\delta<1$ and the last inequality does from Proposition\,\ref{prop:0219-1} and $\delta/2<\delta<1$.
We thus have
\begin{equation}
w^0\in \mathcal{B}_{\mu_0^{\delta}}(w^{\ast})\cap \N_{\mu_0}^{\tau \mu_0^{1+\xi}}.\label{eq:0123}
\end{equation}}
Now, let us prove item~\eqref{mThm:1} by mathematical induction.
To begin with, note that according to Lemma\,\cref{thm:0710-1}, there exists some $\overline{u}>0$ such that
\begin{equation}
\begin{array}{c}
\mu\in (0,\overline{u}];\\
w\in \mathcal{B}_{\mu^{\delta}}(w^{\ast})\cap \N_{\mu}^{\tau \mu^{1+\xi}}
\end{array}
\Longrightarrow
\begin{array}{c}
\mbox{\eqref{eq:pre3} with $P=\mathcal{P}(w)$ is u.s.};
\\
\mbox{\eqref{eq:corr4} with $(w,\mu,P)=(w_{\fr},\mu_+,\mathcal{P}(w_{\fr}))$ is u.s.};\\
\eqref{al:0705-1},\ \eqref{al:0705-2}
\end{array}
\label{eq:0115}
\end{equation}
where
``u.s.'' stands for ``uniquely solvable'', $w_{\fr}:=w+(1-\mu^{\alpha})\Deltap w$,
and $\mu_+:=\mu^{1+\alpha}$. Let us prove \eqref{eq:0704-1721} for $k=0$.
Recall that $\xi$ is the constant chosen in the initial setting of Algorithm\,\cref{alg4}.
Again, take $w^0$ so close to $w^{\ast}$ that $\mu_0=\gamma\|\Xi^I_0(w^0)\|<\min(0.95,\overline{u})$. 
Under the setting $(w,\mu)=(w^0,\mu_0)$, we obtain \eqref{al:0705-1} and \eqref{al:0705-2} from \eqref{eq:0123} and \eqref{eq:0115}. This implies that $G(x^0+(1-\mu_0^{\alpha})\Deltap x^0)\in \mS^m_{++}$ and $Y_0+(1-\mu_0^{\alpha})\Deltap Y_0\in \mS^m_{++}$, and therefore
$1-\mu_0^{\alpha}$ is set to $\bar{s}^{\rm t}_0$ in Line\,\ref{al:skt} of Algorithm\,\cref{alg4}.
Hence, $\mu_1=(1-\bar{s}^{\rm t}_0)\mu_0=\mu_0^{1+\alpha}$, i.e., \eqref{eq:0704-1721} with $k=0$.
In addition, by \eqref{eq:0115} with $(w,\mu)=(w^0,\mu_0)$ again,
the linear equation~\eqref{eq:pre3} with $(w,P,\mu)=(w^0,\P(w^0),\mu_0)$ and \eqref{eq:corr4} with
$(w,P,\mu)=(w^{\fr},\P(w^{\fr}),\mu_{\fr})$ are ensured to have unique solutions. Moreover, we obtain conditions~\eqref{eq:0702-2329} and \eqref{eq:0704-1726} for $k=0$. We thus conclude the desired conditions altogether for $k=0$.

Subsequently, suppose that conditions~\eqref{eq:0704-1721}, \eqref{eq:0702-2329}, and \eqref{eq:0704-1726} together with $\mu_k<\min(0.95,\ol{u})$ hold for some $k\ge 0$, which imply 
$w^{k+1}\in \mathcal{B}_{\mu_{k+1}^{\delta}}(w^{\ast})\cap \N_{\mu_{k+1}}^{\tau\mu_{k+1}^{1+\xi}}$ and $\mu_{k+1}<\overline{u}$, that is, the left-hand side of \eqref{eq:0115} with $w=w^k$ and $\mu=\mu_k$.
We thus, by the right-hand side of \eqref{eq:0115}, obtain conditions\,\eqref{eq:0704-1721}, \eqref{eq:0702-2329}, and \eqref{eq:0704-1726} with $k$ replaced by $k+1$, and moreover establish the unique solvability of equations\,\eqref{eq:pre3} and \eqref{eq:corr4} at the $(k+1)$-th iteration.
By induction, we conclude item~\eqref{mThm:1} for each $k\ge 0$.

We next prove item~\eqref{mThm:2}. By the above proof, we see $w^{k}\in \mathcal{B}_{\mu_k^{\delta}}(w^{\ast})\cap N_{\mu_{k}}^{\tau\mu_{k}^{1+\xi}}$ for each $k\ge 0$ and ensure that $\{w^k\}$ converges to $w^{\ast}$ with fulfilling $w^k\in N_{\mu_{k}}^{\tau\mu_{k}^{1+\xi}}$. Hence, the assertion readily follows because Proposition\,\cref{prop:0219-1} together with \eqref{eq:0704-1721} yields 
\begin{equation}
\|w^{k+1}-w^{\ast}\|=\rO(\mu_{k+1})=\rO(\mu_{k}^{1+\alpha})=\rO(\|w^k-w^{\ast}\|^{1+\alpha}).\notag 
\end{equation}
Lastly, by \eqref{al:cond}, $1+\alpha$ is bounded from above as 
$
1+\alpha<\frac{2(1+\xi)}{2+\xi}<\frac{4}{3}.
$
The proof is complete.
$ \hfill \Box $

\section{Numerical experiments}\label{sec:num}
In this section, we conduct numerical experiments to compare the efficiency of members of the MT family. 
All the programs are implemented with C++ using Eigen-3.3.9 (\url{http://eigen.tuxfamily.org/dox/index.html}) as linear algebra software, and run on a machine with Intel(R) Xeon(R) CPU E5-1620 v3@3.50GHz and 10.24GB RAM.

\subsection{Experimental setting}
As the parameters in Algorithm\,\ref{alg4}, we choose  
$$
\alpha = 0.3,\ \beta = 0.5.
$$
The starting points will be described at section\,\ref{sec:testp}.
To make Algorithm\,\ref{alg4} posses the global convergence property, we perform the following procedure
just before Line~\ref{linescaling} in Algorithm\,\cref{alg4}:
\begin{quote} 
``If $w^k\notin \mathcal{N}^{r_k}_{\mu_k}$, then find $v\in \mathcal{N}^{r_k}_{\mu_k}$ and replace $w^k$ by $v$'',
\end{quote}
where we set $r_k:=\tau \mu_k^{1+\xi}$ at each $k$.
The parameters are selected as 
$$
\xi = \frac{13}{14},\ \tau = 2.
$$	
The above $(\alpha,\xi)$ together with $\xi^{\prime} = \frac{215}{728}$ fulfills condition\,\eqref{al:cond}.
{In order to find $v\in \mathcal{N}^{r_k}_{\mu_k}$, we implement the following Newton method, which
produces 
$\{v^l\}\subseteq \W_{++}$ by 
$$
v^{l+1} \leftarrow v^l + \bar{s}_l\Delta v,\ \ \bar{s}_l\in (0,1], 
$$ 
in which $\Delta v:=(\Delta x,\Delta Y,\Delta z)$ is a solution to the Newton equation $\mathcal{J}\Xi^P_{\muk}(v^l)\Delta v=-\Xi^P_{\muk}(v^l)$. 
Choices of search directions in the MT family are described later.
Let $\Psi_{\mu}^P(w):=\fr\|\Xi_{\mu}^P(w)\|^2$.
To determine the step-size $\bar{s}_l$, we perform the backtracking line search using $\Psi_{\muk}^I(w)$
as merit function: We find the smallest integer $\bar{\ell}\ge 0$ satisfying
\begin{equation}
\Psi_{\muk}^I(v^l+\beta_2(\bar{\ell})\Delta v)\le \left(1 - 0.25 \cdot 
\beta_2(\bar{\ell})\right) \Psi_{\muk}^I(v^l),\label{eq:linesearch}
\end{equation}
where $\beta_2(\bar{\ell}) := 0.99^{\bar{\ell}} \bar{s}$ and $\bar{s}$ is computed by \eqref{eq:sxsY} because linear semidefinite constraints are dealt with in this experiment.
Such $\bar{\ell}$ exists when the above-mentioned scaled Newton equation is uniquely solvable.
\footnote{
Since $\mathcal{J}\Xi^P_{\muk}(v^l)\Delta v=-\Xi^P_{\muk}(v^l)$ is assumed to be nonsingular,
there exists a nonnegative integer $\bar{\ell}$ such that
$\Psi_{\muk}^P(v^l+\beta_2(\bar{\ell})\Delta v)\le \left(1 - 0.25 \cdot 
\beta_2(\bar{\ell})\right) \Psi_{\muk}^P(v^l)$, which is rewritten as  
\eqref{eq:linesearch} because of $\Psi_{\muk}^P(w)=\Psi_{\muk}^I(w)$.
}   
Then, we set $\bar{s}_l\leftarrow \beta_2(\bar{\ell})$.}
For the sake of practical implementation, we terminate the above line search procedure if $\beta_2(\bar{\ell})\le 10^{-8}$ or the size of the difference of both the sides in \eqref{eq:linesearch} is not greater than $10^{-9}$. If it is stopped due to $\beta_2(\bar{\ell})\le 10^{-8}$, we set $\bar{s}_l\leftarrow \bar{s}$.
We call the above procedure the Inner-Newton procedure. We solve the Newton equations of form\,\eqref{al:20230416-1} and 
\eqref{al:20230416-2}.

Algorithm\,\ref{alg4} terminates once any of the following conditions is satisfied:
(a) $\|\Phi^1_0(w)\|\le 10^{-7}$ (thus $\|\Xi^I_0(w)\|\le \|\Phi^1_0(w)\|\le 10^{-7}$ from Proposition\,\ref{prop:xyx}); (b) $\mu_k\le 10^{-9}$; (c) the number of iterations within the Inner-Newton procedure passes 100 and 
its final solution is not contained by $\mathcal{N}^{10\mu_k}_{\mu_k}$; 
(d) the number of outer iterations of Algorithm\,\ref{alg4} passes 50; (e) running time exceeds 300 seconds; (f) the Newton equation is not solvable.

\subsection{Test problems and numerical results}\label{sec:testp}
We solve the following synthetic nonconvex NSDP:
\begin{align}\label{al:pro1}
\begin{array}{rcl}
\displaystyle\mathop{\rm Minimize}_{x\in \R^n}& &c^{\top}x+\frac{1}{2}x^{\top}Mx - a_1 \|x\|_2^3\\
               	               \mbox{subject~to}& &G(x):=a_2I + \sum_{i=1}^nx_iF_i\in \mS^m_+,\\
               	                                        & & h(x) := \sum_{i=1}^n(x_i-a_3)^2 - n\cdot a_3^2 = 0,
\end{array}
\end{align}
where $F_i\in \mS^m$ for $i=1,2,\ldots,n$, $c\in \R^n$, $a_1,a_2,a_3>0$, and $M\in \mS^n$.
Note that the origin $0$ is feasible to the above NSDP, in particular, $G(0)=a_2I\in \mS^m_{++}$ holds.  Moreover, the NSDP has at least one global optimum, since
the feasible region of \eqref{al:pro1} is compact due to the constraint $h(x)=0$.

Each component of $c, M, F_i\ (i=1,\ldots,n)$ is chosen from $[-2,2]$, 
$a_1,a_2$ from $[0,1]$, and $a_3$ from $[0.1,2.1]$, randomly following the uniform distribution.
We generate 50 instances for each of $(m,n)=(50,25),(50,50),(50,100),(100,100)$ in this manner. 
We set $(x^0,Y_0,z^0)\leftarrow (0,I,0)$ and $\mu_0\leftarrow 1$ as the starting point and the initial barrier parameter of the whole algorithm.

We summarize the obtained results in Table\,\ref{tab}, where 
{\tims} stands for the averaged running time in seconds, 
{\outite} for the averaged number of iterations for Algorithm~\ref{alg4}, and 
{\innite} for the averaged total number of iterations in the Newton method performed for computing $v\in \mathcal{N}^{r_k}_{\mu_k}$ for each $k$.  
Moreover, {\resKKT} and {\resKKTMT} represent the averaged values of $\|\Phi^1_0(\cdot)\|$ and $\|\Xi_0^I(\cdot)\|$ at output solutions, respectively, and {\fail} stands for the percentage of successfully solved instances out of the 50 ones.
``MT'', ``NT'', ``{\HKM}'', ``{\HKM}-dual'', and ``MTW'' represent search directions which are used in Algorithm\,\ref{alg4} for solving each instance.
The best record is marked in bold in each column.

From {\fail}, we observe that the MT and NT solved all the problems successfully. Meanwhile, {\HKM}, {\HKM}-dual and MTW failed for some instances, although they gained solutions with $\|\Phi_0^1\|\le 10^{-6}$. From time(s), we see that 
the NT and {\HKM} tend to spend the least time, while the MT the most.
This is mainly due to the computation of $\mathcal{L}_{G(x)}^{-1}(\G_j)$ in constructing the matrix $\mathcal{B}(x,Y)$ in \eqref{al:20230416-1}.
Moreover, unlike the other directions, the MT needs to compute all the elements of $\mathcal{B}(x,Y)$ which is not necessarily symmetric. 
According to {\outite} and {\innite}, the MT compute solutions with a relatively less number of iterations among all directions.
Indeed, it attains the smallest {\outite} and {\innite} for $(m,n)=(100,100),(25,50)$, and the second smallest for $(m,n)=(50,50),(50,100)$.
These results indicate the MT spends more CPU-time than the other directions, but works stably for solving the NSDP.

\begin{table}[htbp]
\caption{Averaged results about 50 instances of problem\,\eqref{al:pro1}}
\centering
\begin{tabular}{c|c|cccccc}
$(m,n)$ & direction & {\tims} & {\outite} & {\innite}& {\resKKT} &{\resKKTMT}& {\fail} \\   \hline
&MT&0.63&{\bf 27.7}&{\bf 4.2}&3.90e-08&3.68e-08 & 100 \\  
&NT&0.28&27.9&4.6&3.62e-08&3.47e-08&100\\  
(25,50)&{\HKM}&{\bf 0.26}&28.0&4.84&4.35e-08&4.21e-08&100\\  
&{\HKM}-dual&0.40&27.8&4.4&{\bf 3.48e-08}&{\bf 3.33e-08}&100\\  
&MTW&0.41&28&4.8&4.17e-08&4.04e-08&100\\ \hline %
&MT&3.12&29.5&5.52&4.15e-08&3.98e-08 & 100 \\  
&NT&1.49&29.8&6.2&{\bf 4.03e-08}&{\bf 3.84e-08}&100\\  
(50,50)&{\HKM}&{\bf 1.36}&30.1&6.84&4.39e-08&4.20e-08&100\\  
&{\HKM}-dual&2.13&{\bf 29.4}&{\bf 5.48}&4.23e-08&3.87e-08&100\\  
  &MTW&2.18&29.9&6.48&4.30e-08&4.12e-08&100\\ \hline %
  &MT&14.47&31.52&6.44&4.39e-08&4.23e-08 & 100 \\  
&NT&{\bf 7.38}&31.98&7.56&{\bf 4.00e-08}&{\bf 3.71e-08}&100\\  
(100,50)&{\HKM}&8.28&33.22&23.02&5.15e-08&5.06e-08&100\\  
&{\HKM}-dual&11.07&{\bf 31.12}&{\bf 5.84}&4.17e-08&3.93e-08&98\\  
&MTW&11.89&32.58&9.6&4.18e-08&3.91e-08&100\\ \hline %
  &MT&33.88&{\bf 30.70}&{\bf 5.8}&4.16e-08&3.75e-08 & 100 \\  
&NT&{13.56}&31.44&7.6&{\bf 4.14e-08}&{\bf 3.69e-08}&100\\  
(100,100)&{\HKM}&{\bf 13.27}&32.72&13.15&4.70e-08&4.34e-08&94\\  
&{\HKM}-dual&20.29&30.74&6.16&4.40e-08&3.96e-08&100\\  
&MTW&21.40&31.81&8.55&4.52e-08&4.04e-08&94\\ \hline %

\end{tabular}\label{tab}
\end{table}

\section{Concluding remarks}\label{sec:con}
In this paper, we have considered nonlinear semidefinite optimization problems (NSDPs) and studied the primal-dual interior point method (PDIPM) for NSDPs using the Monteiro-Tsuchiya (MT) family of directions
generated by applying Newton's method to 
$G(x)^{\fr}YG(x)^{\fr}=\mu I$ after scaling $G(x)$ and $Y$. 
We also have analyzed its local superlinear convergence to a KKT triplet under some assumptions.
The analysis in the paper is specific to the MT family, and quite different from those for the PDIPMs using the Monteiro-Zhang family.  
We can consider a different family of search directions by interchanging the roles of $G(x)$ and $Y$ above.
The properties of the MT family that have been proven can be extended to this family easily.
Finally, a possible future work is to develop globally convergent PDIPMs using the MT family. Moreover, it would be crucial to investigate theoretical advantages and disadvantages of each direction in the MT family.

{\vspace{0.5em}\noindent{\bf Data Availability Statement:} All data in the paper are available from the corresponding author on reasonable request. There is no conflict of interest in writing the paper.

\vspace{0.5em}\noindent{\bf Acknowledgments}: The author thanks Professor Yoshiko Ikebe, Professor Mirai Tanaka, and Professor Makoto Yamashita for numerous comments and suggestions.
He is also sincerely grateful for the anonymous reviewers for many crucial suggestions. }

\bibliography{ref}
\bibliographystyle{spmpsci}      

\label{sec:appendix1}
\def\thesection{A}
\renewcommand{\theequation}{A.\arabic{equation}}
\renewcommand{\thetheorem}{A.\arabic{theorem}}
\setcounter{equation}{0}
\setcounter{theorem}{0}
\section{Omitted Proofs}\label{sec:appendix1}
\subsection{Proof of Proposition\,\cref{prop:scaling}}
Before the proof of Proposition\,\cref{prop:scaling}, we first give two propositions. 
Choose arbitrary sequences $\{\tw^{\ell}\}$ and $\{\tmu^{\ell}\}$ satisfying \eqref{eq:P2} in Condition~({\bf P2}).
To show the proposition, we prepare the following two claims.

Since the semidefinite complementarity condition that $G(x^{\ast})\bullet Y_{\ast}=0$, $G(x^{\ast})\in \mS^m_+$, and $Y_{\ast}\in \mS^m_+$ holds, 
the matrices $G(x^{\ast})$ and $Y_{\ast}$ can be simultaneously diagonalized, namely, there
 exists an orthogonal matrix $Q_{\ast}\in \R^{m\times m}$ such that
 \begin{equation}
 G(x^{\ast})=Q_{\ast}\begin{bmatrix}\Lambda_1&O\\O&O
 \end{bmatrix}Q_{\ast}^{\top},\ Y_{\ast}=Q_{\ast}
 \begin{bmatrix}O&O
 \\
 O&\Lambda_2
 \end{bmatrix}
 Q^{\top}_{\ast},
\label{eq:mat}
 \end{equation}
 where $\Lambda_1\in \R^{r_{\ast}\times r_{\ast}}$ with $r_{\ast}:={\rm rank}\,G(x^{\ast})$
 is a positive diagonal matrix and $\Lambda_2\in \R^{(m-r_{\ast})\times (m-r_{\ast})}$ is a nonnegative diagonal matrix.
 The diagonal entries of $\Lambda_1$ and $\Lambda_2$ are the eigenvalues of $G(x^{\ast})$ and $Y_{\ast}$, respectively.
The following proposition is obtained from \cite[Lemma~3]{yamashita2012local} under the assumption that $\tw^{\ell}\in \N_{\tmul}^{r_{\ell}}$ with $r_{\ell}={\rm o}(\tmul)$.
\begin{PropA}\label{appendix:prop:0219-1}
It holds that 
$$
G_{\ell} =\begin{bmatrix}
{\rm \Theta}(1)& {\rm O}(\tmul)\\
{\rm O}(\tmul)&{\rm \Theta}(\tmul)
\end{bmatrix},\ \tY_{\ell}
=\begin{bmatrix}
{\rm \Theta}(\tmul) & {\rm O}(\tmul)\\
{\rm O}(\tmul)&{{\rm \Theta}}(1) 
\end{bmatrix},
$$
where both the matrices are partitioned into the four blocks with the same sizes as those in \eqref{eq:mat}. The above expressions indicate upper-bounds of the magnitude of the block matrices. For example, $\|\mbox{the $(1,1)$-block of $\Gl$}\|_{\rm F}={\rm \Theta}(1)$.
Moreover, the sequences of the inverse matrices satisfy 
$$
\Gl^{-1}  =\begin{bmatrix}
{\rm \Theta}(1)& {\rm O}(1)\\
{\rm O}(1)&{\rm \Theta}(\tmul^{-1})
\end{bmatrix},\ \tYl^{-1}=\begin{bmatrix}
{\rm \Theta}(\tmul^{-1}) & {\rm O}(1)\\
{\rm O}(1)&{\rm \Theta}(1) 
\end{bmatrix}.
$$
\end{PropA}
The next one will be used to prove Case~(i) of Proposition\,\cref{prop:scaling}.
\begin{PropA}\label{prop:AA}
Let $U:=\mathcal{L}_{X^{\fr}}^{-1}(\Delta X)$ for $X\in \mS^m_{++}$ and $\Delta X\in \mS^m$.
Then,
$$
\|UX^{-\fr}\|_{\rm F}=\|X^{-\fr}U\|_{\rm F}\le \frac{1}{\sqrt{2}}\|X^{-\fr}\Delta XX^{-\fr}\|_{\rm F}
\le \sqrt{\frac{m}{2}}\|\Delta X\|_{\rm F}\|X^{-1}\|_{\rm F}$$
\end{PropA}

\begin{proof}
Since the first equality is obvious, we show the inequalities part.
From $U=\mathcal{L}_{X^{\fr}}^{-1}(\Delta X)$ 
it follows that 
$UX^{\fr}+X^{\fr}U=\Delta X$, which implies
\begin{equation}
UX^{-\fr}+X^{-\fr}U=X^{-\fr}\Delta XX^{-\fr}. \label{eq:0217-2331}
\end{equation}
Then, the first desired inequality follows from \cite[Lemma~2.1]{monteiro1999polynomial}.
The second one is obtained from 
$$
\frac{
\|X^{-\fr}\Delta XX^{-\fr}\|_{\rm F}
}{\sqrt{2}}
\le 
\frac{
\|X^{-\fr}\|_{\rm F}^2\|\Delta X\|_{\rm F}
}{\sqrt{2}}
\le 
\sqrt{\frac{m}{2}} \|X^{-1}\|_{\rm F}\|\Delta X\|_{\rm F},
$$
where the second inequality follows from  $\|X^{-\fr}\|_{\rm F}^2={\rm Tr}(X^{-1})\le \|I\|_{\rm F}\|X^{-1}\|_{\rm F}=\sqrt{m}\|X^{-1}\|_{\rm F}$.
Hence, the proof is complete.
\hfill$\Box$
\end{proof}

Let us start proving Proposition\,\cref{prop:scaling}.
For each $\ell$, let 
\begin{align*}
\tPl:=\mathcal{P}(\twl),\ \hGl:=\tPl\Gl\tPl^{\top},\ 
\hYl:=\tPl^{-\top}\tYl\tPl^{-1}.
\end{align*}
For other notations such as $\hGi$, see Condition~($\bf P2$).
Note that for $w\in \mathcal{W}$ and $\mu>0$,
\begin{equation}
\|\hG(x)^{\fr}\hY\hG(x)^{\fr}-\mu I\|_{\rm F}
=\|G(x)^{\fr}YG(x)^{\fr}-\mu I\|_{\rm F}
\le \|G(x)Y-\mu I\|_{\rm F},
\label{eq:0117-0}
\end{equation}
where the equality is easily verified by comparing the squares of both the sides
and the inequality follows from Proposition\,\cref{prop:xyx} with $X=G(x)$. Combining the above relation with $\|\Gl\tYl-\tmul I\|_{\rm F}=\rO(\tmul^{1+\xi})$, we have
\begin{equation}
\|\hGl^{\fr}\hYl\hGl^{\fr}-\tmul I\|_{\rm F}=\rO(\tmul^{1+\xi}).
\label{eq:0117-1}
\end{equation}
We often use the following equations: 
\begin{align}
&\|\Gl\|_{\rm F}=\rO(1),\ \|\tYl\|_{\rm F}=\rO(1),\ \|\Gi(\txl)\|_{\rm F}
=\rO(1),\label{eq;0118-2}\\
&\tmul\|\Gl^{-1}\|_{\rm F}=\rO(1),\ \tmul\|\tYl^{-1}\|_{\rm F}=\rO(1),\label{eq;0118-1}
\end{align}
where the equations in \eqref{eq;0118-2} are derived from
$\lim_{\ell\to\infty}(\tx^{\ell},\tYl)=(x^{\ast},Y_{\ast})$ and the continuity of $\Gi$ and $G$, and those in \eqref{eq;0118-1} follow from Proposition\,\cref{appendix:prop:0219-1}.

Now, we proceed to the proof of Cases\,(i)-(v).
Fix $i\in \{1,2,\ldots,n\}$ arbitrarily and write
$\Uls:=\mathcal{L}^{-1}_{\hGl^{\fr}}(\hGi(\txl)).$
We first show Case~(i) with $\tPl=I$ for any $\ell$. 
Note $\zli={\tmul}\Uls\Gl^{-\fr}$ with $\Uls=\mathcal{L}^{-1}_{\Gl^{\fr}}(\Gi(\txl))$ in this case.
We then have
$\tmul \Uls\Gl^{\fr}+\tmul \Gl^{\fr}\Uls=\tmul \Gi(\txl)$,
which together with Proposition\,\cref{prop:AA} with $(X,\Delta X)=(\Gl,\Gi(\txl))$ implies 
\begin{align*}
\|\zli\|_{\rm F}=
\tmul\|\Uls\Gl^{-\fr}\|_{\rm F}\le 
\tmul\sqrt{\frac{m}{2}}\|\Gl^{-1}\|_{\rm F}\|\Gi(\txl)\|_{\rm F}=\rO(1),
\end{align*}
where we used \eqref{eq;0118-2} and \eqref{eq;0118-1}.
Hence, $\{\zli\}$ is bounded.
We next show Case~(ii) with $\tPl=\Gl^{-\fr}$.
By $\hGl=I$ for each $\ell$, we have $\Uls=\hGi(\txl)/2$, which together with \eqref{eq;0118-2} and \eqref{eq;0118-1} implies
$$
\|\zli\|_{\rm F}=\tmul \|\Gl^{\fr}\Uls\Gl^{-\fr}\|_{\rm F}=\frac{\tmul}{2} \|\Gi(\txl)\Gl^{-1}\|_{\rm F}=\rO(1).
$$
Thus, $\{\zli\}$ is bounded for Case~(ii).

In what follows, we show the remaining cases in a unified manner. 
As will be shown later, in each of Cases~(iii)-(v), there exists some $\rast>0$ such that  
\begin{equation}
\|\hGl^{\frac{1}{\rast}}-\tmul I\|_{\rm F}=\rO(\tmul^{1+\xi}).\label{eq:0119-0106}
\end{equation}

Let $\Uellast :=\frac{\tmul^{-\frac{\rast}{2}}}{2}\hGi(\txl)$ for each $\ell$.
The expression 
$\tmul\left\|\tPl^{-1}\Uellast\hGl^{-\fr}\tPl\right\|_{\rm F}$ is evaluated as 
\begin{equation}
  \tmul\left\|\tPl^{-1}\Uellast\hGl^{-\fr}\tPl\right\|_{\rm F}=\frac{\tmul^{1-\frac{\rast}{2}}\|\Gil\tPl^{\top}\hGl^{-\fr}\tPl\|_{\rm F}}{2}
=\rO(\tmul^{1-\rast}\|\tPl\|_{\rm F}^2),\label{eq:0119}
\end{equation}
where
the first equality follows from
$\widehat{\Gi}(\txl)=\tPl\Gi(\txl)\tPl^{\top}$
and the last one from $\|\hGl^{-\frac{1}{2}}\|_{\rm F}=\rO(\tmul^{-\frac{\rast}{2}})$ by
\eqref{eq:0119-0106} and $\|\Gi(\txl)\|_{\rm F}=\rO(1)$ as in \eqref{eq;0118-2}.
Furthermore, let
\begin{equation}\label{eq:hgl}
\hGl=Q_{\ell}D_{\ell}Q_{\ell}^{\top}
\end{equation}
be an eigen-decomposition of $\hGl$ with an appropriate orthogonal matrix $Q_{\ell}\in \R^{m\times m}$ and a diagonal matrix $D_{\ell}\in \R^{m\times m}$ with the eigenvalues of $\hGl$ aligned on the diagonal.
Notice that $\hGl^{\fr}=Q_{\ell}D_{\ell}^{\fr}Q_{\ell}^{\top}$.
Denote $\dd_{p,\ell}:=(D_{\ell})_{pp}\in \R$ for each $p=1,2,\ldots,m$ and $\UQl:=Q_{\ell}^{\top}\Uls Q_{\ell}$. 
By multiplying
$\Ql^{\top}$ and $\Ql$ on both sides of  
$\Uls\hGl^{\fr}+\hGl^{\fr}\Uls=\hGi(\txl)$, it follows from \eqref{eq:hgl} that 
$\UQl D_{\ell}^{\fr}+D_{\ell}^{\fr}\UQl=\Ql^{\top}\hGi(\txl)\Ql$, which together with $\dd_{p,\ell}=(D_{\ell})_{pp}$ for each $p,\ell$
yields
\begin{equation}
(\UQl)_{p,q}=\frac{(\Ql^{\top}\hGi(\txl) \Ql)_{p,q}}{\dd_{p,\ell}^{\fr}+\dd_{q,\ell}^{\fr}}\label{eq:UQ}
\end{equation}
for each $p,q=1,2,\ldots,m$.
From \eqref{eq:0119-0106}, 
for each $p=1,2,\ldots,m$, there exists some $\{\delta_{p,\l}\}\subseteq \R$ satisfying $\delta_{p,\l}=\rO(\tmul^{1+\xi})$ and 
$
\dd_{p,\l}=(\tmul+\delta_{p,\l})^{\rast}.
$
By taking the fact of $\tmul>0$ into account, for each $p$, the mean-value theorem implies that for some $\bar{s}_{p,\l}\in [0,1]$
\begin{align}
\dd_{p,\l}^{\fr}-\tmul^{\frac{\rast}{2}}=(\tmul+\delta_{p,\l})^{\frac{\rast}{2}}-\tmul^{\frac{\rast}{2}}
                                   ={\frac{1}{2}\rast\delta_{p,\l}(\tmul+\bar{s}_{p,\l}\delta_{p,\l})^{\frac{\rast}{2}-1}}.\label{al:0815-1}
\end{align}
Notice that $\tmul+\bar{s}_{p,\l}\delta_{p,\l}=\rTheta(\tmul)$ and $\dd_{p,\l} =\rTheta(\tmul^{\rast})$ for all $p$.
Then, for each $p,q=1,2,\ldots,m$, \eqref{al:0815-1} yields 
\begin{align}
\frac{1}{2\tmul^{\frac{\rast}{2}}}-\frac{1}{\dd_{p,\l}^{\fr}+\dd_{q,\l}^{\fr}}
&=\frac{\dd_{p,\l}^{\fr}+\dd_{q,\l}^{\fr}-2\tmul^{\frac{\rast}{2}}}{2(\dd_{p,\l}^{\fr}+\dd_{q,\l}^{\fr})\mu^{\frac{\rast}{2}}}\notag \\
&=\rO\left(
\frac{
\rast\delta_{p,\l}(\tmul+\bar{s}_{p,\l}\delta_{p,\l})^{\frac{\rast}{2}-1}+
\rast\delta_{q,\l}(\tmul+\bar{s}_{q,\l}\delta_{q,\l})^{\frac{\rast}{2}-1}
}
{
4\tmul^{\rast}
}
\right)\notag \\
&=\rO(\tmul^{\xi - \frac{\rast}{2}}),
\end{align}
where the last equality follows from $\delta_{p,\l}=\rO(\tmul^{1+\xi})$ and $\delta_{q,\l}=\rO(\tmul^{1+\xi})$.
From this fact together with $\|\Ql^{\top}\hGi(\txl)\Ql\|_{\rm F}=
\|\Ql^{\top}\tPl\Gi(\txl)\tPl^{\top}\Ql\|_{\rm F}=
\rO(\|\tPl\|_{\rm F}^2)$, we obtain, for each $p,q$,
\begin{equation*}
(\UQl-\Ql^{\top}\Uellast \Ql)_{p,q}={(\Ql^{\top}\hGi(\txl)\Ql)_{p,q}}\left(\frac{1}{\dd_{p,\l}^{\fr}+\dd_{q,\l}^{\fr}}-\frac{1}{2\tmul^{\frac{\rast}{2}}}\right)\\
=\rO\left(\|\tPl\|_{\rm F}^2\tmul^{\xi - \frac{\rast}{2}}\right), 
\end{equation*}
which together with $\|\Gl^{-\fr}\|_{\rm F}=\rO(\tmul^{-\frac{\rast}{2}})$ from \eqref{eq:0119-0106} implies
\begin{align}
\|\tmul \tPl^{-1}\Ql(\UQl-\Ql^{\top}\Uellast \Ql)\Ql^{\top}\hGl^{-\fr}\tPl\|_{\rm F}&\le \tmul \|\tPl^{-1}\|_{\rm F}\|\Ql\|_{\rm F}^2\|\tPl\|_{\rm F}\|\UQl-\Ql^{\top}\Uellast \Ql\|_{\rm F}\|\hGl^{-\fr}\|_{\rm F}\notag\\
&=\displaystyle{\rO(\tmul^{\xi+1-\rast}\|\tPl\|_{\rm F}^3\|\tPl^{-1}\|_{\rm F})},\notag
\end{align}
where we used the fact that $\|\Ql\|_{\rm F}=\sqrt{m}$ because $\Ql$ is an orthogonal matrix.
Hence, by recalling $\zli=\tmul \tPl^{-1}\Uls\hGl^{-\fr}\tPl$ and using \eqref{eq:0119} we obtain
\begin{align}
\|\zli\|_{\rm F}&\le \|\tmul \tPl^{-1}\Uellast \hGl^{-\fr}\tPl\|_{\rm F}
+\|\tmul \tPl^{-1}\Ql(\UQl-\Ql^{\top}\Uellast \Ql)\Ql^{\top}\hGl^{-\fr}\tPl\|_{\rm F}\notag\\
&=\rO(\tmul^{1-\rast}\|\tPl\|_{\rm F}^2+\tmul^{\xi+1-\rast}\|\tPl\|_{\rm F}^3\|\tPl^{-1}\|_{\rm F}).\label{01119-2}
\end{align}
Hereafter, for each of Cases~(iii)-(v), we evaluate $\|\tPl\|_{\rm F}$, $\|\tPl^{-1}\|_{\rm F}$, and $\rast$, and prove the boundedness of $\{\zli\}$ by showing that the rightmost hand expression in \eqref{01119-2} is $\rO(1)$.\vspace{0.5em}\\
\noindent{\bf Case~(iii)}: 
Since $\hYl=I$, $\tPl=\tYl^{\fr}$ for each $\ell$, \eqref{eq:0117-1} implies $\|\hGl-\tmul I\|_{\rm F}=\|
\hGl^{\fr}\hYl\hGl^{\fr}-\tmul I\|_{\rm F}=\rO(\tmul^{1+\xi})$, which indicates $\rast=1$ (see \eqref{eq:0119-0106}).
Furthermore, $\|\tPl\|_{\rm F}=\|\tYl^{\fr}\|_{\rm F}=\rO(1)$ and $\|\tPl^{-1}\|_{\rm F}=\|\tYl^{-\fr}\|_{\rm F}=\rO(\tmul^{-\fr})$ by \eqref{eq;0118-1}.
Combined with \eqref{01119-2} and the assumption $\xi\ge \fr$, these results yield
$\|\zli\|_{\rm F}=\displaystyle{\rO(1+\tmul^{\xi-\fr})}=\rO(1)$.

\vspace{0.5em}
\noindent{\bf Case~(iv)}: Since $\tPl=(\tYl\Gl\tYl)^{\fr}$ and $\hGl^{-\fr}=\hYl$, we have, from \eqref{eq:0117-1},
$
\|\hGl^{\fr}-\tmul I\|_{\rm F}=\rO(\tmul^{1+\xi})
$
yielding $\rast=2$. 
Moreover, by \eqref{eq;0118-2} and $\|\Gl\tYl-\tmul I\|_{\rm F}=\rO(\tmul^{1+\xi})$,  
\begin{align*}
&\|\tPl\|_{\rm F}^2={\rm Tr}(\tYl \Gl\tYl)\le \|\tYl\|_{\rm F}\|\Gl\tYl-\tmul I\|_{\rm F}
+\tmul \|\tYl\|_{\rm F}=\rO(\tmul),\ \mbox{and}\\
&\|\tPl^{-1}\|_{\rm F}^2={\rm Tr}(\tYl^{-1}\Gl^{-1}\tYl^{-1})
                ={\rm Tr}(\Gl \Kl^2)     
                 \le \|\Gl\|_{\rm F}\|\Kl\|_{\rm F}^2
                 =\rO(\tmul^{-2}),
\end{align*}
where $\Kl:=\Gl^{-\fr}\tYl^{-1}\Gl^{-\fr}$ and
we used $\|\Kl\|_{\rm F}=\rO(\tmul^{-1})$ from \eqref{eq:0117-1} to derive the last equality. 
These results combined with \eqref{01119-2}, $\rast=2$, and
$\xi\ge \fr$ yield
$\|\zli\|_{\rm F}
=\displaystyle{\rO(1+\tmul^{\xi-\fr})}=\rO(1).$\\
\vspace{0.5em}
\noindent{\bf Case~(v)}: By $\hGl=\hYl$ and \eqref{eq:0117-1}, 
we have
$\|\hGl^2-\tmul I\|_{\rm F}=\rO(\tmul^{1+\xi})$ yielding $\rast=\fr$.
Recall that the MTW scaling matrix $\Wl$ is defined by
$
\Wl:=\Gl^{\fr}\left(\Gl^{\fr}\tYl\Gl^{\fr}\right)^{-\fr}\Gl^{\fr}
$
for each $\ell$.
Note that $\|\Gl\|_{\rm F}=\rO(1)$
and $\|\Gl^{\fr}\tYl \Gl^{\fr}\|_{\rm F}=\rTheta(\tmul)$
follow from \eqref{eq;0118-2} and \eqref{eq:0117-1}, respectively.
The first equality in \eqref{eq:0117-0} then implies
\begin{equation}
\|\Wl\|^2_{\rm F} =\|\Gl^{\fr}\left(\Gl^{-\fr}\tYl^{-1}\Gl^{-\fr}\right)^{\fr}\Gl^{\fr}\|^2_{\rm F}
           \le m\|\Gl\|_{\rm F}^2\|\Gl^{-\fr}\tYl^{-1}\Gl^{-\fr}\|_{\rm F}   
           =\rO(\tmul^{-1}), \notag
\end{equation}
which entails
$\|\Wl\|_{\rm F}=\rO(\tmul^{-\fr})$. 
Using $\|\Gl^{\fr}\tYl \Gl^{\fr}\|_{\rm F}=\rTheta(\tmul)$ again and $\|\Gl^{-1}\|_{\rm F}=\rO(\tmul^{-1})$ from \eqref{eq;0118-1}, we have 
  $\|\Wl^{-1}\|_{\rm F} = \|\Gl^{-\fr}\left(\Gl^{\fr}\tYl\Gl^{\fr}\right)^{\fr}\Gl^{-\fr}\|_{\rm F}
  =\rO(\tmul^{-\fr}).$
Hence, we obtain that 
$
\|\tPl\|_{\rm F}^2={\rm Tr}(\Wl^{-1})=\rO(\tmul^{-\fr})
$ and 
$
\|\tPl^{-1}\|_{\rm F}^2={\rm Tr}(\Wl)=\rO(\tmul^{-\fr})$.
Therefore, 
$
\|\tPl\|_{\rm F}=\rO(\tmul^{-\frac{1}{4}}),\ \|\tPl^{-1}\|_{\rm F}=\rO(\tmul^{-\frac{1}{4}}).$ 
These results combined with \eqref{01119-2}, $\rast=\fr$, and $\xi\ge \fr$ yield
$\|\zli\|_{\rm F}=\rO(1+\tmul^{\xi-\fr})=\rO(1)$.
We complete the proof.

\hfill $\square$

\end{document}